% !!!IMPORTANT NOTE: Please read carefully all information including those preceded by % sign
%Before you compile the tex file please download the class file AIMS.cls from the following URL link to the
%local folder where your tex file resides. http://aimsciences.org/journals/tex-sample/AIMS.cls.
\documentclass{aims}
\usepackage{amsmath}
  \usepackage{paralist}
  \usepackage{graphics} %% add this and next lines if pictures should be in esp format
  \usepackage{epsfig} %For pictures: screened artwork should be set up with an 85 or 100 line screen
\usepackage{graphicx}  \usepackage{epstopdf}%This is to transfer .eps figure to .pdf figure; please compile your paper using PDFLeTex or PDFTeXify.
 \usepackage[colorlinks=true]{hyperref}
   % Warning: when you first run your tex file, some errors might occur,
   % please just press enter key to end the compilation process, then it will be fine if you run your tex file again.
   % Note that it is highly recommended by AIMS to use this package.
   
\usepackage{caption}
\usepackage{subcaption}
\usepackage{mathrsfs}
\usepackage{cancel}

\usepackage{mathtools}
\DeclarePairedDelimiter\floor{\lfloor}{\rfloor}

% Tikz stuff
\usepackage{tikz, tikz-cd}
\usetikzlibrary{shapes,arrows}
% Block style
\tikzstyle{block} = [rectangle, draw, text width=5em, text centered, rounded corners, minimum height=4em]
\usetikzlibrary{calc,patterns,angles,quotes}
\usetikzlibrary{arrows, automata}
\usetikzlibrary{positioning}
\usepackage{tkz-euclide}
%\usetkzobj{all}
\usetikzlibrary{calc}
   
\hypersetup{urlcolor=blue, citecolor=red}

  \textheight=8.2 true in
   \textwidth=5.0 true in
    \topmargin 30pt
     \setcounter{page}{1}

% The next 5 line will be entered by an editorial staff.
%\def\currentvolume{X}
% \def\currentissue{X}
%  \def\currentyear{200X}
%   \def\currentmonth{XX}
%    \def\ppages{X--XX}
%     \def\DOI{10.3934/xx.xxxxxxx}

 % Please minimize the usage of "newtheorem", "newcommand", and use
 % equation numbers only situation when they provide essential convenience
 % Try to avoid defining your own macros

\newtheorem{theorem}{Theorem}[section]
\newtheorem{corollary}{Corollary}

\newtheorem{lemma}[theorem]{Lemma}
\newtheorem{proposition}{Proposition}
\newtheorem{conjecture}{Conjecture}

\theoremstyle{definition}
\newtheorem{definition}[theorem]{Definition}
\newtheorem{remark}{Remark}

\newtheorem{assumption}{Assumption}
\newtheorem{example}{Example}

%% Place the running title of the paper with 40 letters or less in []
 %% and the full title of the paper in { }.
\title[Hybrid Invariants] %Use the shortened version of the full title
      {Invariant Forms in Hybrid and Impact Systems and a Taming of Zeno}

% Place all authors' names in [ ] shown as running head, Leave { } empty
% Please use `and' to connect the last two names if applicable
% Use FirstNameInitial.  MiddleNameInitial. LastName, or only last names of authors if there are too many authors
\author[W. Clark and A. Bloch]{}

% It is required to enter 2020 MSC.
\subjclass{Primary: 37C40, 70G45; Secondary: 37C83.}
% Please provide minimum  5 keywords.
 \keywords{Hybrid systems, invariants, geometric mechanics.}

% Email address of each of all authors is required.
% You may list email addresses of all other authors, separately.
 \email{wac76@cornell.edu}
 \email{abloch@umich.edu}

% Put your short thanks below. For long thanks/acknowlegements,
%please go to the last acknowlegments section.
\thanks{The first author is supported by NSF grants DMS-1645643 and DMS-2103026. The second
	author is supported by NSF grant DMS-1613819 and AFOSR grant 77219283.}

% Add corresponding author at the footnote of the first page if it is necessary.
% Plase add $^*$ adjacent to the corresponding author's name on the first page.
% The example shown in this template is if the first author is the corresponding author.
%\thanks{$^*$ Corresponding author}

\begin{document}
\maketitle

% Enter the first author's name and address:
\centerline{\scshape William Clark}
\medskip
{\footnotesize
% please put the address of the first author
 \centerline{Department of Mathematics, Cornell University}
   \centerline{301 Tower Road, Ithaca, NY, USA}
} % Do not forget to end the {\footnotesize by the sign }

\medskip

\centerline{\scshape Anthony Bloch}
\medskip
{\footnotesize
 % please put the address of the second  and third author
 \centerline{Department of Mathematics, University of Michigan}
   \centerline{530 Church Street, Ann Arbor, MI, USA}
}

\bigskip

% The name of the associate editor will be entered by an editorial staff
% "Communicated by the associate editor name" is not needed for special issue.
 %\centerline{(Communicated by the associate editor name)}

%The abstract of your paper
\begin{abstract}
Hybrid (and impact) systems are dynamical systems experiencing both continuous and discrete transitions. In this work, we derive necessary and sufficient conditions for when a given differential form is invariant, with special attention paid to the case of the existence of invariant volumes. Particular attention is given to impact systems where the continuous dynamics are Lagrangian and  subject to nonholonomic constraints. {A celebrated result for volume-preserving dynamical systems is Poincar\'{e} recurrence. In order to be recurrent, trajectories need to exist for long periods of time, which can be controlled in continuous-time systems through e.g. compactness. For hybrid systems, an additional mechanism can occur which breaks long-time existence: Zeno (infinitely many discrete transitions in a finite amount of time). 
We demonstrate that the existence of a smooth invariant volume severely inhibits Zeno behavior; hybrid systems with the ``boundary identity property'' along with an invariant volume-form have almost no Zeno trajectories (although Zeno trajectories can still exist). This leads to the result that many billiards (e.g. the classical point, the rolling disk, and the rolling ball) are recurrent independent on the shape of the compact table-top.}
\end{abstract}

%%%%%%%%%%%%%%%%%%%%%%%%%%%%%%%%%%%%%%%%%%%%%%%%%%%%%%%%%%%%%%%%%%%%%%%%
%%%%%%%%%%%%%%%%%%%%%%%%%%%%%%%%%%%%%%%%%%%%%%%%%%%%%%%%%%%%%%%%%%%%%%%%
%%%%%%%%%%%%%%%%%%%%%%%%%%%%%%%%%%%%%%%%%%%%%%%%%%%%%%%%%%%%%%%%%%%%%%%%
%%%%%%%%%%%%%%%%%%%%%%%%%%%%%%%%%%%%%%%%%%%%%%%%%%%%%%%%%%%%%%%%%%%%%%%%

%The title of your section 1
\section{Introduction}
The standard billiard problem models the billiard ball as a point particle and studies its trajectory assuming that it moves in straight lines between impacts and under goes specular reflection (angle of incidence equals the angle of reflection) during impact, e.g. \cite{alksch,bunimovich,markov_billiards,KMS} and the references therein to name only a few. However, real billiard balls are not point particles; billiard balls are balls. An actual rolling ball is an example of a \textit{nonholonomic} system (it is subject to velocity-dependent constraints). Treating the billiard ball as an actual ball complicates the dynamics: the ball is no longer required to travel in straight lines as the ball's spin influences its trajectory and the symmetric nature of impacts is destroyed (linear momentum can be converted to angular momentum and vice versa). The goal of this work is to understand this situation and is split up into three main goals: (a) determine the proper impacts for nonholonomic systems, (b) understand and find conditions for when an arbitrary differential form is preserved under impacts, and (c) to use the power and utility of invariant forms to provide information on the Zeno phenomenon in hybrid systems. {The emphasis of this work is to deal with (b) and (c) while (a) will be used to supply a rich plethora of examples. We point out that there does not seem to be any results concerning goals (b) or (c) (with the exception of some partial and limited results discussed below).}

Classical mechanical systems that are not subject to nonholonomic (nonintegrable) constraints satisfy the principle of least action and, equivalently, their trajectories satisfy the Euler-Lagrange equations. Following this same paradigm, mechanical impacts are assumed to also satisfy the principle of least action which results in the well-known Weierstrass-Erdmann corner conditions (these conditions specify that energy is conserved at impact and the change in velocity is perpendicular to the {normal of the} impacting surface). Nonholonomic systems, by contrast, do not satisfy the principle of least action (they are \textit{not} variational). Rather they satisfy the Lagrange-d'Alembert principle which results in the ``dynamic nonholonomic equations of motion,'' \cite{bloch2008nonholonomic}. For nonholonomic systems with impacts, the impact equations are no longer given by the Weierstrass-Erdmann conditions, but rather a modified version arising from the Lagrange-d'Alembert principle. The first goal of this work is to derive this impact map and is the focus of Section \ref{sec:mechanical_impact}. {This problem has been studied, albeit differently, in e.g. \cite{cortesvinogradov2006,pasquero2005,pasquero2006,pasquero2018}.}

For a general continuous-time dynamical system generated by a differential equation $\dot{x}=X(x)$ where $x\in M$, a smooth function is a constant of motion if $\mathcal{L}_Xf=0$ where $\mathcal{L}$ is the usual Lie derivative. Moreover, an arbitrary differential $k$-form, $\alpha\in\Omega^k(M)$, is preserved under the flow if $\mathcal{L}_X\alpha=0$. For unconstrained mechanical systems, it is known that the energy (a 0-form) and the symplectic form (a 2-form) are always preserved under the flow. This question becomes more difficult for hybrid/impact systems as we lose continuity at the point of impact and differentiability falls away. {In the context of this work, a hybrid/impact system will be of the form
\begin{equation*}
	\begin{cases}
		\dot{x} = X(x), & x\in M\setminus S \\
		x^+ = \Delta(x^-), & x\in S
	\end{cases}
\end{equation*}
where $S\subset M$ is a codimension 1 embedded submanifold and $\Delta:S\to M$ is the reset map.}
A smooth function is a constant {of motion} if both $\mathcal{L}_Xf=0$ and $f\circ\Delta = f$. An important consequence of this is that energy is still a constant of motion for impact systems. 
{It is known, through more direct calculations, that the symplectic form is preserved in impact systems, \cite{impactSymp,asynchronousVHTG}. Be that as it may, a general test for invariant forms does not seem to exist and is rather subtle as $\Delta^*\alpha=\alpha$ is not the correct approach. The correct test is the topic of Section \ref{sec:h_diff_forms}.}

The culmination of our study of invariant differential forms is to examine invariant volumes and invariant measures to build measurable dynamics for hybrid/impact systems. The Krylov-Bogolyubov theorem \cite{krylov1937} still holds for hybrid systems \cite{collins} which guarantees that invariant measures always exist. Even though these measures exist, they can be singular which does not convey much information about the underlying system. As such, we apply the machinery for invariant differential forms to construct a ``hybrid cohomology equation'' which, if solvable, produces a \textit{smooth} hybrid-invariant measure.  Section \ref{sec:vol_mech} shows that there exists a smooth invariant volume for unconstrained mechanical systems with impacts and also lays out conditions for when nonholonomic systems with impacts possess an invariant volume. It turns out that if an invariant volume exists for a nonholonomic system with impacts, it exists \textit{independently} of the choice of impact surface. This implies that the billiard problem with an actual ball (or vertical rolling coin) undergoes Poincar\'{e} recurrence for \textit{any} bounded table-top {- with the important caveat of the possibility of Zeno trajectories.}

The final main endeavor of this work is to examine how measure-preservation influences the Zeno phenomenon in hybrid systems. Roughly speaking, a trajectory in a hybrid system is \textit{Zeno} if it undergoes infinitely many impacts in a finite amount of time. This issue has received a considerable amount of attention, cf. e.g.\cite{4434891,1656623,brogliato2016,mattKv,orstability}. 
{The standard example of hybrid systems with Zeno trajectories are impact systems with inelastic collisions \cite{suffZeno}. This class of examples possess both energy and volume dissipation which makes it difficult to ascertain which mechanism is responsible for Zeno, cf. Example \ref{ex:intro_bball} and Figure \ref{fig:intro_example_bball} below. By abandoning physical impact laws it is possible to get energy-preserving systems with Zeno, cf. Example \ref{ex:intro_cross} and Figure \ref{fig:intro_example_cross} below. Although energy is preserved, volume is still contracting.}
Section \ref{sec:Zeno} examines how the presence of an invariant measure inhibits
the existence of too many Zeno trajectories. Zeno can still occur, but (under a few extra regularity assumptions which mechanical impact systems satisfy) Zeno can \textit{almost never occur}. 

%\begin{center}
%	These are the suggested references to add: \cite{pasquero2018,massapagani1991,massapagani1997,pasquero2005,pasquero2006} - these all use jet bundles. \cite{MVB2002} is intrinsic.
%	
%	\cite{bravetti2017} seems irrelevant to this paper as it deals with contact geometry. \cite{cortesvinogradov2006} provides ways to come up with impact laws.
%\end{center}
%\begin{center}
%	\textcolor{blue}{These are the references I didn't use: \cite{massapagani1991,massapagani1997,MVB2002,bravetti2017}}
%\end{center}

%\textcolor{magenta}{This seems like enough of the suggested references. You can probably kill the ones you do not use and remove above text of course.  Re the examples below, they are really cool, but don't they need some sort of introduction here.
%I guess it would be okay if you just put the examples immediately after the discussion above with the organizing paragraph after. Is that okay? Right now they appear to come out of nowhere.}
%
%\textcolor{blue}{The examples are introduced in the first full paragraph on this page. Should it be more?}

\begin{example}[Bouncing Ball]\label{ex:intro_bball}
	Consider the inelastic bouncing ball. The continuous dynamics are given by
	\begin{equation*}
		\dot{x} = \frac{1}{m}p, \quad \dot{p} = -mg,
	\end{equation*}
	where $x$ its the ball's height, $p$ its momentum, $m$ its mass, and $g$ is the acceleration due to gravity. When the ball strikes the table, the state changes according to the restitution law,
	\begin{equation*}
		\Delta(x,p) = (x,-c\cdot p),
	\end{equation*}
	where $0<c<1$.
	Between impacts, both volume and energy are conserved while both diminish at impacts as shown in Figure \ref{fig:intro_example_bball}. Both the energy and volume collapse to zero at the Zeno time. This seems to imply a connection between energy dissipation/volume contraction and Zeno. It turns out that energy conservation implies volume conservation (Proposition \ref{prop:unconst_hamilt_volume}) and volume conservation implies that Zeno \textit{almost never occurs} (Theorem \ref{thm:noZeno}).
%%%%%%%%%%%%%%%%%%%%%%%%%%%%%%%%%%%%%%%%%%%%%%%%%%%%%%%%%%%%%%%%%%%%%%%%
\begin{figure}
	\begin{subfigure}[t]{.48\columnwidth}
		\centering
		\includegraphics[width=\linewidth]{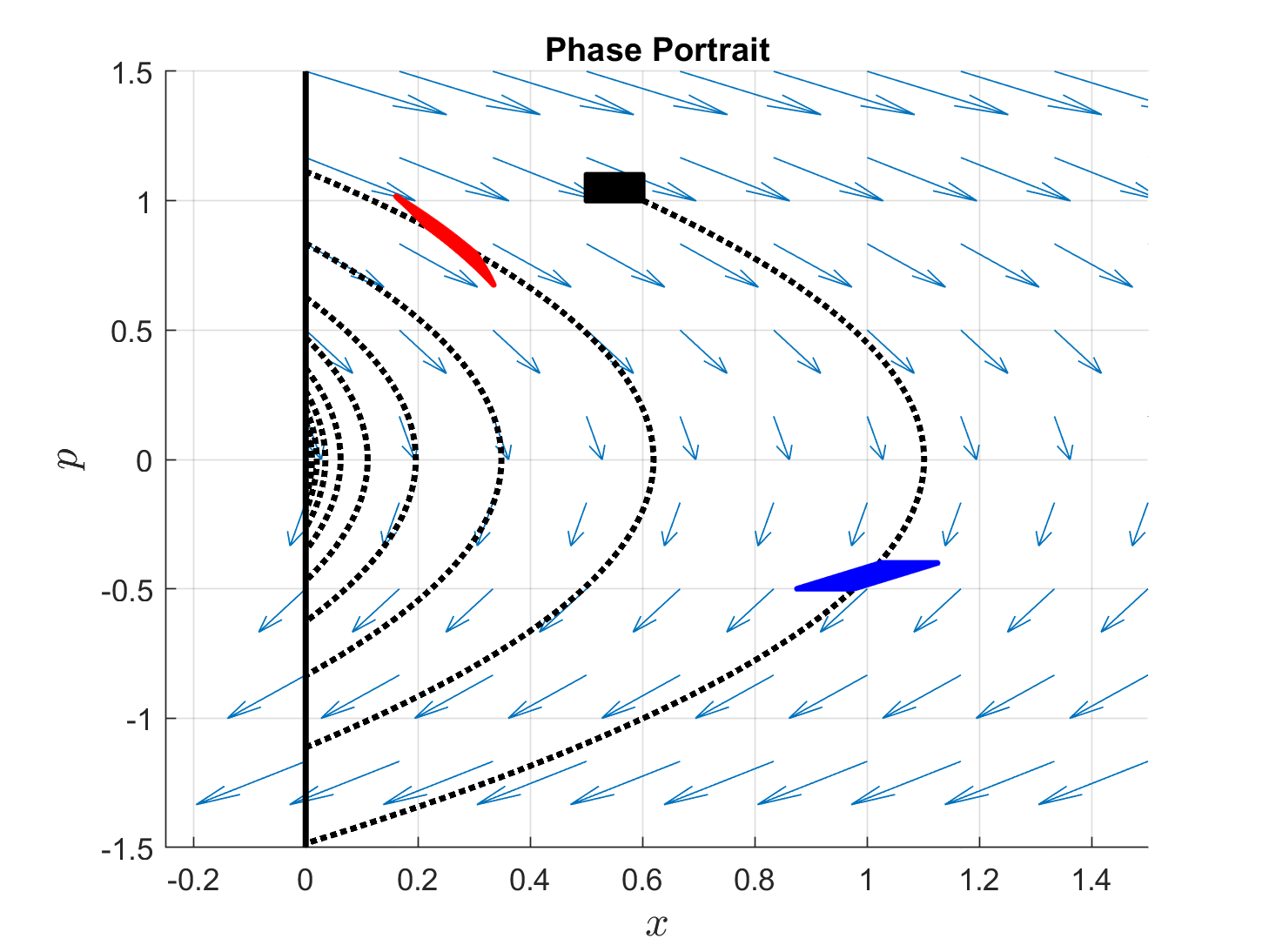}
	\end{subfigure}
	\hfill
	\begin{subfigure}[t]{.48\columnwidth}
		\centering
		\includegraphics[width=\linewidth]{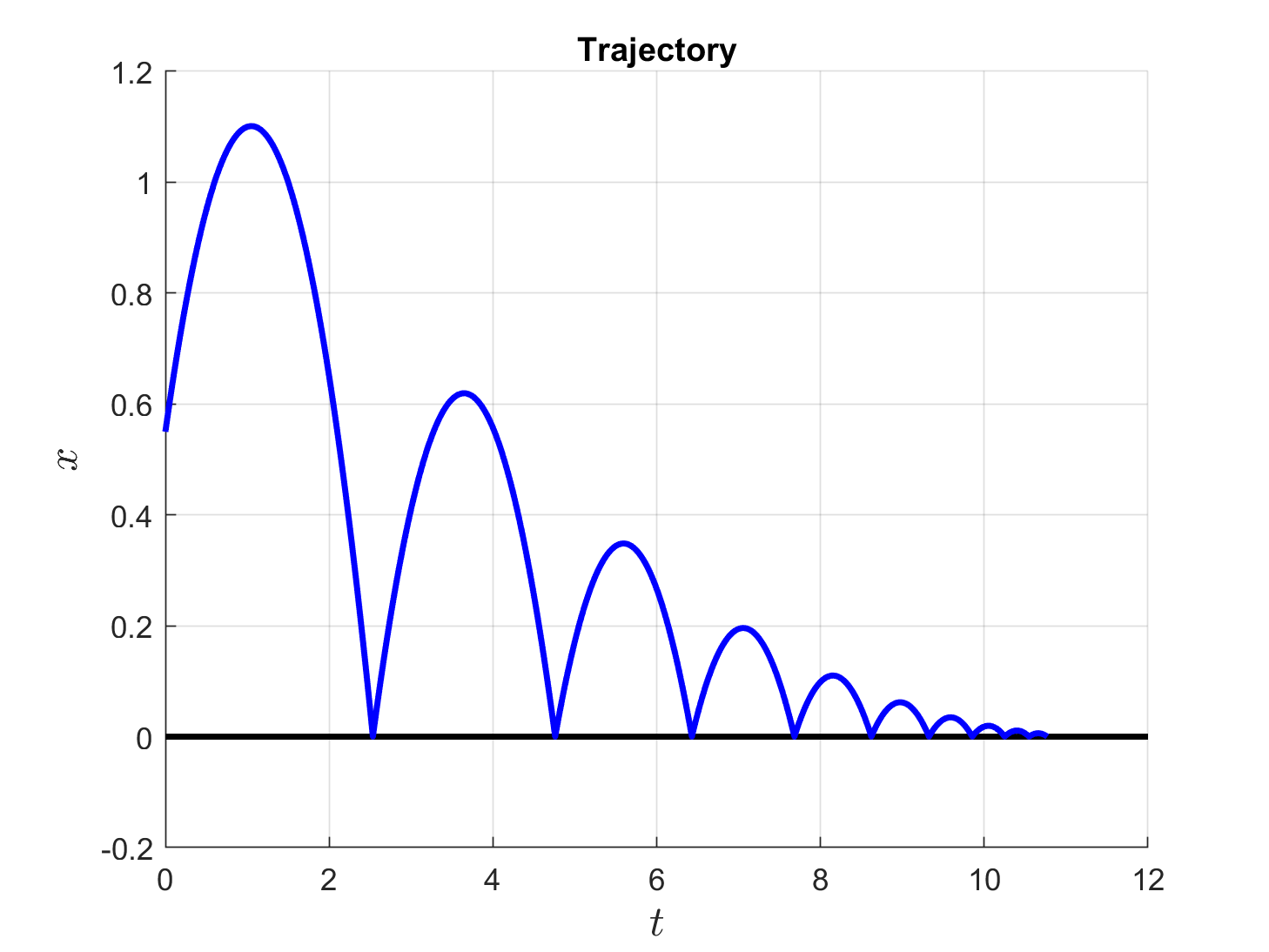}
	\end{subfigure}
	\medskip
	\begin{subfigure}[t]{0.48\columnwidth}
		\centering
		\includegraphics[width=\linewidth]{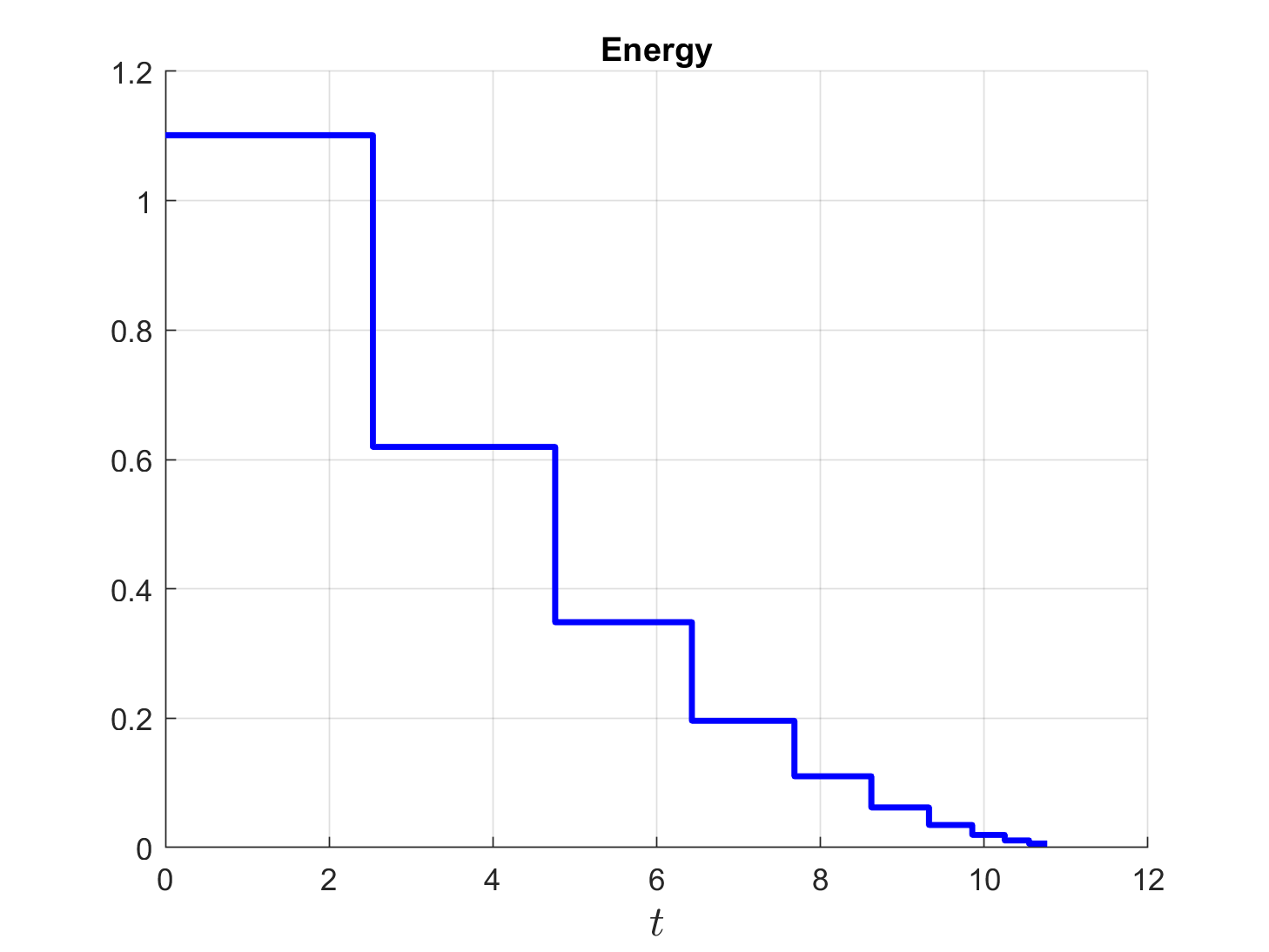}
	\end{subfigure}
	\hfill
	\begin{subfigure}[t]{0.48\columnwidth}
		\centering
		\includegraphics[width=\linewidth]{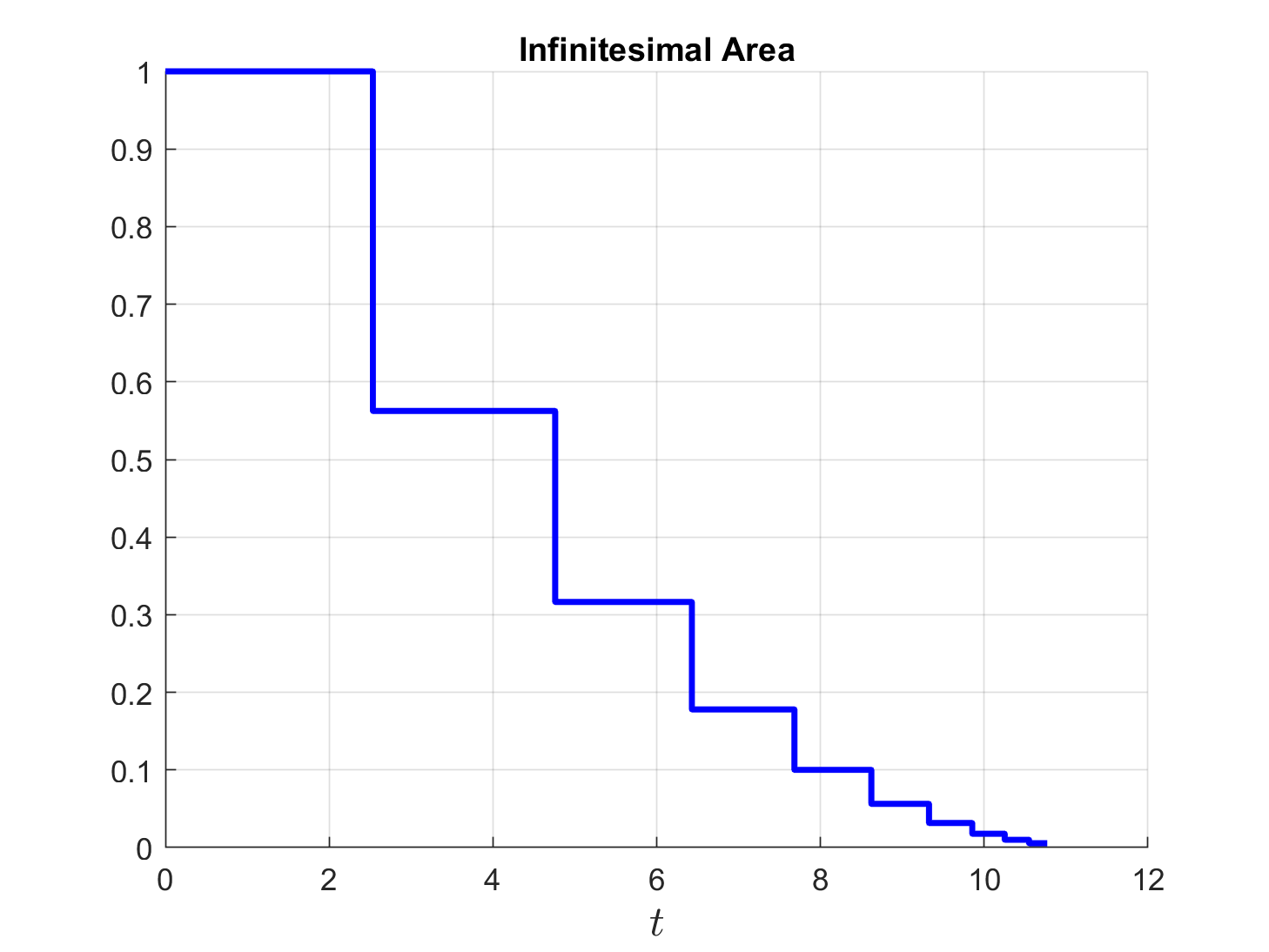}
	\end{subfigure}
	\caption{Plots of the bouncing ball with dissipative impacts, $m=g=1$ and $c=3/4$. Upper left: The phase portrait of the system. The blue arrows designate the continuous-time dynamics while the dashed curve is a trajectory. The black/blue/red regions illustrate propagation of phase volume. Due to continuous-time volume conservation of Hamiltonian systems, the black and blue regions have equal area. However, the red region has strictly smaller area due to the contraction of the impact. Upper right: The position vs time trajectory. Lower left: The energy of the ball vs time. Lower right: The infinitesimal volume vs time. 
	}
	\label{fig:intro_example_bball}
\end{figure}
%%%%%%%%%%%%%%%%%%%%%%%%%%%%%%%%%%%%%%%%%%%%%%%%%%%%%%%%%%%%%%%%%%%%%%%%
\end{example}

\begin{example}[Energy preserving Zeno]\label{ex:intro_cross}
	Consider the planar system with dynamics
	\begin{equation*}
		\ddot{x} = 0, \quad \ddot{y} = 0,
	\end{equation*}
	with reset law
	\begin{equation*}
		\Delta(x,y,p_x,p_y) = (x,y,p_x,-p_y),
	\end{equation*}
	occurring when $x=y$ or $x=-y$. This system preserves energy but still results in Zeno when the trajectory enters the ``cross.''
\begin{figure}
	\begin{subfigure}[t]{.48\columnwidth}
		\centering
		\includegraphics[width=\linewidth]{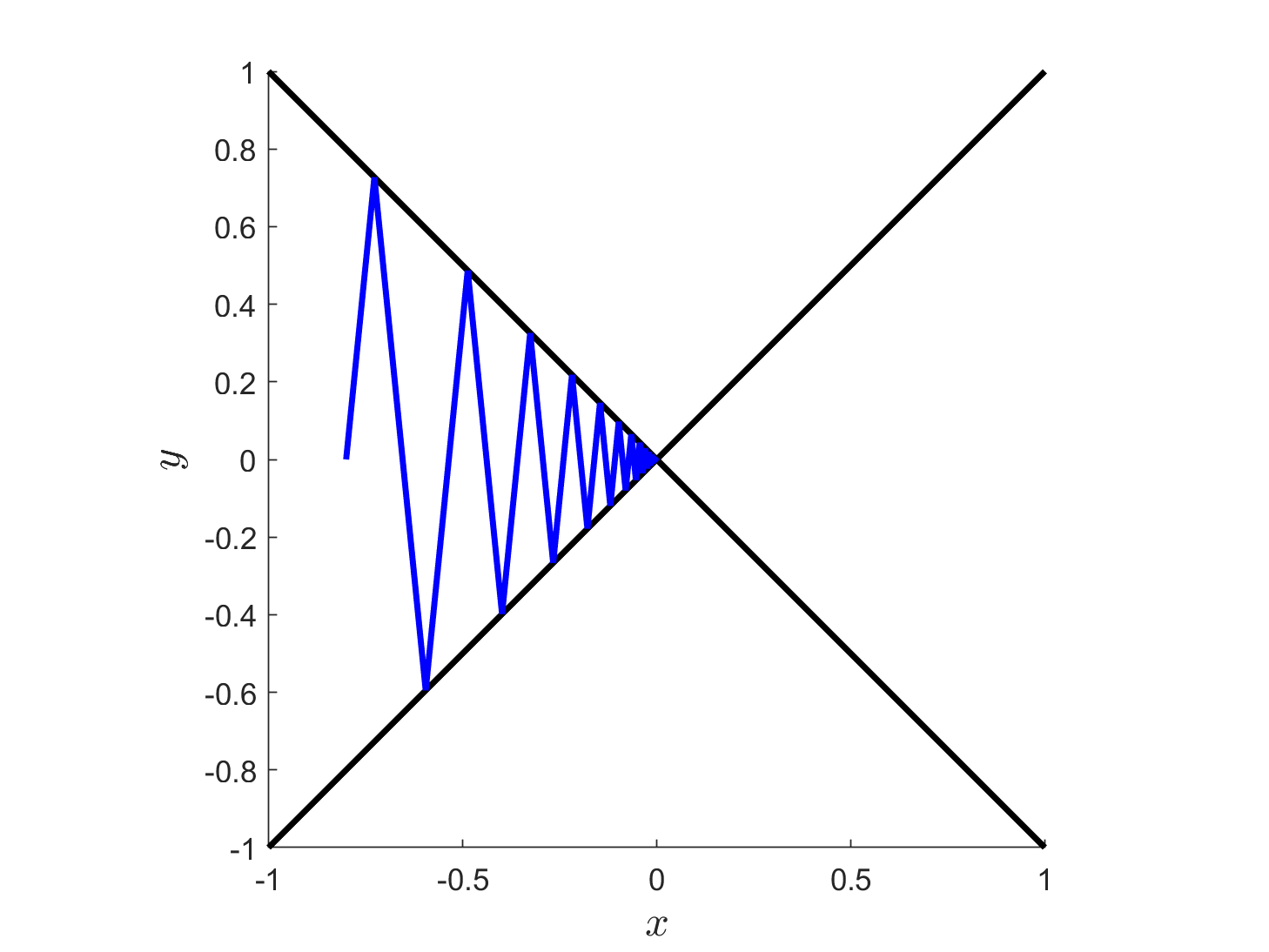}
	\end{subfigure}
	\hfill
	\begin{subfigure}[t]{.48\columnwidth}
		\centering
		\includegraphics[width=\linewidth]{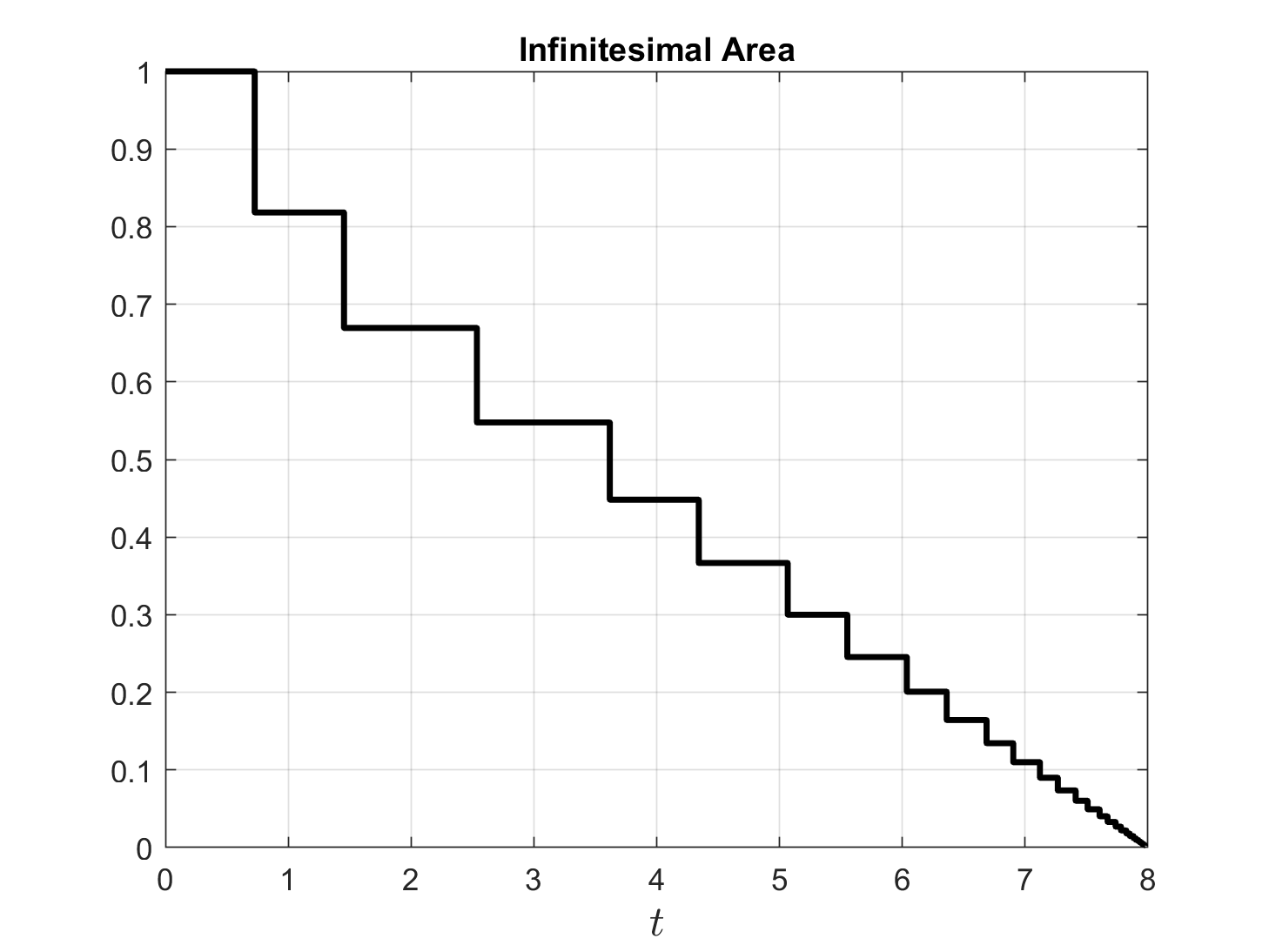}
	\end{subfigure}
	\caption{Left: The (Zeno) trajectory. Right: The infinitesimal volume around the initial condition.
	}
	\label{fig:intro_example_cross}
\end{figure}
\end{example}
%%%%%%%%%%%%%%%%%%%%%%%%%%%%%%%%%%%%%%%%%%%%%%%%%%%%%%%%%%%%%%%%%%%%%%%%
This paper is organized as follows: Section \ref{sec:prelim} outlines basics of both hybrid dynamical systems and geometric mechanics. Section \ref{sec:mechanical_impact} derives the impact map for both unconstrained and nonholonomic impact systems. Section \ref{sec:h_diff_forms} derives conditions for whether or not differential forms are hybrid-invariant and section \ref{sec:vol_mech} specializes to the case of invariant volume forms in mechanical systems. Section \ref{sec:Zeno} explores how volume-preservation influences the existence of Zeno states {and presents an example of a volume-preserving impact system with a Zeno trajectory}. {Section \ref{sec:filippov} is a short section applying the results developed to Filippov systems.} Section \ref{sec:examples} contains two examples: the Chaplygin sleigh and the (vertical) rolling disk. Some future research directions are presented in section \ref{sec:future}.
%%%%%%%%%%%%%%%%%%%%%%%%%%%%%%%%%%%%%%%%%%%%%%%%%%%%%%%%%%%%%%%%%%%%%%%%
%%%%%%%%%%%%%%%%%%%%%%%%%%%%%%%%%%%%%%%%%%%%%%%%%%%%%%%%%%%%%%%%%%%%%%%%
%%%%%%%%%%%%%%%%%%%%%%%%%%%%%%%%%%%%%%%%%%%%%%%%%%%%%%%%%%%%%%%%%%%%%%%%
\section{Preliminaries}\label{sec:prelim}
We review some notation and definitions from both hybrid systems and geometric mechanics.
\subsection{Hybrid systems}\label{ssec:hybrid}
This subsection is devoted to defining notation and presenting our version of a hybrid system and, as such, we will not be concerned with the minutia of defining the solution concept for hybrid systems. For more details on foundations of hybrid systems, see e.g. \cite{hdsgoebe,haddad2006}.
%%%%%%%%%%%%%%%%%%%%%%%%%%%%%%%%%%%%%%%%%%%%%%%%%%%%%%%%%%%%%%%%%%%%%%%%

A hybrid dynamical system is a dynamical system that experiences both continuous and discrete transitions. There exist many different, nonequivalent, ways to formalize this idea. However, as we are concerned with modeling impact mechanics as hybrid systems, we will use the following definition for a hybrid system which depends on four pieces of data \cite{brogliato2016,fslip,haddad2006,MoGr2005}. Throughout, smooth will mean $C^\infty$ (although most results can be relaxed to $C^1$).
\begin{definition}\label{def:HDS}
	A hybrid dynamical system (HDS) is a 4-tuple, $\mathcal{H} = (M,S,X,\Delta)$, such that
	\begin{enumerate}
		\item[(H.1)] $M$ is a smooth (finite-dimensional) manifold,
		\item[(H.2)] $S\subset M$ is a smooth embedded submanifold with co-dimension 1,
		\item[(H.3)] $X:M\to TM$ is a smooth vector field,
		\item[(H.4)] $\Delta:S\to M$ is a smooth map {whose image is an embedded submanifold}, and
		\item[(H.5)] $S\cap\Delta(S)=\emptyset$ and $\overline{S}\cap\overline{\Delta(S)}$ has co-dimension at least 2.
	\end{enumerate}
\end{definition}
The manifold $M$ is called the state-space, $S$ the impact surface, $X$ the continuous dynamics, and $\Delta$ the impact map, discrete dynamics, or the reset map. {There are no assumptions on the rank of $\Delta$.}
\begin{remark}
	The axiom (H.5) is in place to disallow \textit{beating}. Beating is when {repeated resets} happen instantaneously{; this phenomenon will be ignored in this work as it is a detriment to differentiability, particularly this assumption is critical to Theorem \ref{thm:hybrid_smooth}.} 
	{It is important to point out that beating is not when multiple (different) impacts occur simultaneously, e.g. multi-legged walking when multiple legs strike the ground simultaneously. In this case $S$ would fail to be differentiable and violates (H.2).}
	This {axiom} will also come in useful in \S\ref{sec:Zeno} where we prove that if there exists a smooth invariant volume, then Zeno solutions almost never happen.
\end{remark}
The hybrid dynamics can be informally described as
\begin{equation}\label{eq:firsthybrid}
	\begin{cases}
		\dot{x} = X(x),& x\not\in S,\\
		x^+ = \Delta(x^-),& x^-\in S.
	\end{cases}
\end{equation}
That is, the dynamics follow the continuous dynamics $\dot{x}=X(x)$ away from $S$ and get reinitialized by $\Delta$ when the set $S$ is reached.
\subsubsection{Regularity of Solutions}
We end our preliminary discussion of hybrid systems with a section on regularity of their solutions. This is needed as we will be considering differential forms which requires a notion of differentiability. We start with the solution concept: \textit{the hybrid flow}.
\begin{definition}
	Let $\mathcal{H}=(M,S,X,\Delta)$ be an HDS. Let $\varphi:\mathbb{R} \times M\to M$ be the flow for the continuous dynamics $\dot{x}=X(x)$. Additionally, let $\varphi^\mathcal{H}:\mathbb{R}\times M\to M$ be the flow for the hybrid dynamics \eqref{eq:firsthybrid}, i.e. $\varphi^\mathcal{H}$ satisfies
	\begin{equation*}
		\varphi^\mathcal{H}(t,x) = \varphi(t,x)
	\end{equation*}
	if for all $s\in [0,t]$, $\varphi(s,x)\not\in S$, and if $\varphi(s,x)\not\in S$ for all $s\in [0,t)$ but $\varphi(t,x)\in S$, then
	\begin{equation*}
		\lim_{s\to t^+} \, \varphi^\mathcal{H}(s,x) = \Delta\left( \lim_{s\to t^-}\, \varphi^\mathcal{H}(s,t) \right).
	\end{equation*}
\end{definition}
For more details on the solution concept, cf. e.g. \cite{hdsgoebe}.

Obviously, $\varphi^\mathcal{H}$ will not be differentiable (as it is not continuous at the impact surface). However, it can satisfy the weaker property of being quasi-smooth, which is a similar idea to being quasi-continuous \cite{dishliev1991,haddad2006}.
\begin{definition}
	Consider a hybrid dynamical system $\mathcal{H}=(M,S,X,\Delta)$ with hybrid flow $\varphi^\mathcal{H}$. $\mathcal{H}$ has the quasi-smooth dependence property if for every $x\in M\setminus S$ and $t\in \mathbb{R}$ such that $\varphi^\mathcal{H}(t,x)\not\in S$, there exists an open neighborhood $x\in U$ such that $U\cap S=\emptyset$ and the map $\varphi^\mathcal{H}(t,\cdot):U\to M$ is smooth.
\end{definition}
The quasi-smooth dependence property follows, essentially, from the continuous flow and the impact surface being transverse, cf. Chapter 2 in \cite{clarkthesis}.
\begin{theorem}[\cite{clarkthesis}]\label{thm:hybrid_smooth}
	Let $\mathcal{H}=(M,S,X,\Delta)$ be a hybrid dynamical system satisfying (H.1)-(H.5). In addition, suppose that $\mathcal{H}$ satisfies
	\begin{itemize}
		\item[(A.1)] If $\varphi^\mathcal{H}(t,x)\in \overline{S}\setminus S$, then there exists $\varepsilon>0$ such that for all $0<\delta<\varepsilon$ we have $\varphi^\mathcal{H}(t+\delta,x)\not\in S$, and
		\item[(A.2)] For all $x\in S$, we have $T_xM = T_xS\oplus X(x)\mathbb{R}$.
	\end{itemize} 
	Then, $\mathcal{H}$ has the quasi-smooth dependence property and
	\begin{itemize}
		\item[(A.3)] If $\varphi^\mathcal{H}(t,x)\in S$, then there exists $\varepsilon > 0$ such that for all $0<\delta<\varepsilon$ we have $\varphi^\mathcal{H}(t+\delta,x)\not\in S$.
	\end{itemize}
\end{theorem}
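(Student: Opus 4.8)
The plan is to realize the time-$t$ hybrid flow near $x$ as a fixed finite composition of smooth maps — alternating continuous flow-to-impact with the reset $\Delta$ — and then to show that this compositional structure is locally constant in the initial condition. The two essential ingredients are the transversality hypothesis (A.2), which via the implicit function theorem makes the impact times depend smoothly on the initial datum and forces impacts to persist under perturbation, and (A.1), which prevents a trajectory from manufacturing a spurious impact by merely grazing the relative boundary $\overline{S}\setminus S$.

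First I would record that the trajectory from $x$ to $y:=\varphi^\mathcal{H}(t,x)\notin S$ over $[0,t]$ meets $S$ only finitely often: were the impact times to accumulate in $[0,t]$, the hybrid flow could not be defined through the accumulation time, contradicting the existence of the point $y$. Write these times as $0<t_1<\cdots<t_N<t$ with pre-impact points $p_i=\lim_{s\to t_i^-}\varphi^\mathcal{H}(s,x)\in S$. Near each $p_i$ represent $S$ as a regular level set $\{g_i=0\}$ with $dg_i\neq 0$, and set $F_i(s,z)=g_i(\varphi(s,z))$, where $\varphi$ is the smooth continuous flow. By (A.2), $X(p_i)\notin T_{p_i}S=\ker dg_i$, so $\partial_s F_i=dg_i\bigl(X(\varphi)\bigr)\neq 0$ at the impact; the implicit function theorem then produces a smooth hitting time $z\mapsto\tau_i(z)$ on a neighborhood of the relevant pre-impact state, and transversality guarantees that nearby trajectories genuinely cross $S$ rather than becoming tangent and missing it. On a sufficiently small neighborhood $U\ni x$ (shrunk so that $U\cap S=\emptyset$) I can then build inductively the smooth maps ``flow to the $i$-th impact, then apply $\Delta$,'' followed by a final segment flowing the remaining time; their composition is $\varphi^\mathcal{H}(t,\cdot)|_U$, which is smooth \emph{provided the number and order of impacts stays equal to $N$ for every $z\in U$}.

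Establishing this local constancy of the impact pattern is the main obstacle. I must exclude both an existing impact vanishing and a new impact appearing. The former is ruled out by transversality and the smoothness of the $\tau_i$. For the latter, on each open inter-impact segment, and on the final segment terminating at $y\notin S$, the trajectory lies off $S$; the difficulty is that when $S$ is not closed the segment may accumulate onto the relative boundary $\overline{S}\setminus S$, so ``positive distance to $S$'' can fail and a nearby trajectory might dip into $S$. This is precisely the scenario controlled by (A.1): whenever the trajectory meets $\overline{S}\setminus S$ it immediately leaves $S$, so grazing the boundary cannot spawn a genuine impact for nearby initial conditions. Hence the impact pattern is locally constant and $\varphi^\mathcal{H}(t,\cdot)$ is smooth on $U$, which is exactly the quasi-smooth dependence property.

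It remains to prove (A.3). If $\varphi^\mathcal{H}(t,x)=p\in S$, the reset moves the state to $\Delta(p)$, and (H.5) gives $\Delta(p)\in\Delta(S)$ with $S\cap\Delta(S)=\emptyset$, so $\Delta(p)\notin S$. When $\Delta(p)\notin\overline{S}$, a neighborhood of $\Delta(p)$ misses $S$, so the continuous flow keeps $\varphi^\mathcal{H}(t+\delta,x)$ off $S$ for all small $\delta>0$; when $\Delta(p)\in\overline{S}\setminus S$, applying (A.1) at the post-impact state yields the same conclusion directly. In either case there is $\varepsilon>0$ such that $\varphi^\mathcal{H}(t+\delta,x)\notin S$ for $0<\delta<\varepsilon$, establishing (A.3).
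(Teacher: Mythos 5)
The paper itself does not prove this theorem; it is imported from \cite{clarkthesis} with only the remark that quasi-smoothness ``follows, essentially, from the continuous flow and the impact surface being transverse.'' So there is no in-paper proof to match against, and your proposal must stand on its own. Its skeleton is the standard and surely intended one: finitely many impacts on $[0,t]$, smooth impact times $\tau_i$ from the implicit function theorem via (A.2), and $\varphi^\mathcal{H}(t,\cdot)|_U$ written as a fixed composition of smooth flow-to-impact maps and resets. Your treatment of (A.3) is also correct: (H.5) gives $\Delta(p)\notin S$, and the two cases $\Delta(p)\notin\overline{S}$ and $\Delta(p)\in\overline{S}\setminus S$ are handled by openness and by (A.1) respectively.

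The genuine gap is the step you yourself flag as ``the main obstacle'' and then dispatch in one sentence: the claim that (A.1) prevents a perturbed trajectory from acquiring a new impact when the reference trajectory grazes $\overline{S}\setminus S$. Condition (A.1) is a statement about the \emph{given} trajectory only --- if $\varphi^\mathcal{H}(s,x)\in\overline{S}\setminus S$ then $\varphi^\mathcal{H}(s+\delta,x)\notin S$ for small $\delta$ --- and it says nothing about trajectories starting at nearby $z\in U$. Concretely, if at some inter-impact time $s$ the reference orbit passes through a point of $\overline{S}\setminus S$ transversally to $\overline{S}$, then (A.1) holds (the reference orbit exits without ever lying in $S$), yet initial conditions arbitrarily close to $x$ can cross $S$ itself near time $s$, pick up an extra reset, and destroy continuity of $\varphi^\mathcal{H}(t,\cdot)$ at $x$; nothing in your argument excludes this. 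To close the gap you need an actual argument that the closure of each inter-impact segment meets $\overline{S}$ only at the genuine impact endpoints (so that a compactness/continuity argument gives a uniform positive distance to $S$ elsewhere), or some quantitative strengthening of (A.1) that controls a neighborhood of the grazing point rather than the single orbit. As written, the local constancy of the impact pattern --- the crux of quasi-smooth dependence --- is asserted rather than proved.
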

\begin{definition}\label{def:smooth}
	A hybrid dynamical system satisfying (H.1)-(H.5) and (A.1)-(A.2) is called \textit{smooth}.
\end{definition}
\begin{remark}
	The condition (A.1) prohibits the trajectory from entering $S$ through $\overline{S}$. Condition (A.2) is that the continuous dynamics are transverse to $S$. Finally, (A.3) requires trajectories entering $S$ to immediately leave $S$. In the language of mechanical impact systems, (A.1) prohibits grazing impacts and (A.3) states that impacts must move the particle away from the obstacle. {These regularity assumptions are important to control Zeno and to allow for the quasi-smooth dependence property.}
	Plastic impacts are not smooth and are not considered in this work.
\end{remark}
%%%%%%%%%%%%%%%%%%%%%%%%%%%%%%%%%%%%%%%%%%%%%%%%%%%%%%%%%%%%%%%%%%%%%%%%
\subsection{Geometric mechanics}\label{ssec:geomech}
Although we present criteria for invariant differential forms which apply for any smooth hybrid dynamical system, the focus of the results and examples will all be mechanical impact systems. These systems will be presented as Lagrangian/Hamiltonian systems. We review these systems here as well as nonholonomic constraints. Our overview will be brief; for more information cf., e.g. \cite{abraham2008foundations} and \cite{bloch2008nonholonomic}.
%%%%%%%%%%%%%%%%%%%%%%%%%%%%%%%%%%%%%%%%%%%%%%%%%%%%%%%%%%%%%%%%%%%%%%%%
\subsubsection{Lagrangian mechanics}
For a mechanical system, the space of all possible positions is given by a (smooth) manifold $Q$ called the configuration space. Lagrangian mechanics is defined by a function on the tangent bundle $L:TQ\to\mathbb{R}$ called the Lagrangian function. The equations of motion are given by the Euler-Lagrange equations:
\begin{equation}\label{eq:Lagrange}
	\frac{d}{dt}\frac{\partial L}{\partial \dot{q}} - \frac{\partial L}{\partial q} = 0.
\end{equation}
For most physical examples, the Lagrangian is the difference between the system's kinetic and potential energy. Lagrangians of this form are called \textit{natural}.
\begin{definition}
	A Lagrangian $L:TQ\to\mathbb{R}$ is called natural if 
	\begin{equation*}
		L(q,\dot{q}) = \frac{1}{2}g_q(\dot{q},\dot{q}) - V(q),
	\end{equation*}
	where $g$ is a Riemannian metric on $Q$ and $V:Q\to\mathbb{R}$ is the potential energy.
\end{definition}
Throughout, all Lagrangians will be assumed to be natural.
%%%%%%%%%%%%%%%%%%%%%%%%%%%%%%%%%%%%%%%%%%%%%%%%%%%%%%%%%%%%%%%%%%%%%%%%
\subsubsection{Hamiltonian mechanics}
While Lagrangian mechanics evolves on the tangent bundle, Hamiltonian systems evolve on the cotangent bundle. Given a Hamiltonian function, $H:T^*Q\to\mathbb{R}$, the dynamics are given by Hamilton's equations:
\begin{equation}\label{eq:Hamilton}
	\dot{q} = \frac{\partial H}{\partial p}, \quad \dot{p} = -\frac{\partial H}{\partial q}.
\end{equation}
Or, equivalently, by
\begin{equation*}
	dH = i_{X_H}\omega,
\end{equation*}
where $\omega = dq^i\wedge dp_i$ is the standard symplectic form on $T^*Q$ and $i_X\omega = \omega(X,\cdot)$ is the contraction.

Lagrangian and Hamiltonian systems are intimately related through the Legendre transform.
We first define the fiber derivative:
\begin{equation*}
	\mathbb{F}L:TQ\to T^*Q, \quad \mathbb{F}L(v)(w) = \left.\frac{d}{dt}\right|_{t=0}\, L(q,v + tw).
\end{equation*}
When $L$ is natural, the fiber derivative is a diffeomorphism and we have $\mathbb{F}L(v) = g(v,\cdot)$. As long as the fiber derivative is invertable, we can define the Legendre transform via
\begin{equation*}
	H(q,p) = p(\dot{q}) - L(q,\dot{q}), \quad p = \mathbb{F}L(\dot{q}).
\end{equation*}
With this association, the equations of motion \eqref{eq:Lagrange} and \eqref{eq:Hamilton} are equivalent (cf. 3.6.2 in \cite{abraham2008foundations}).

A famous property of Hamiltonian systems is that they preserve the symplectic form (and, consequently, the induced volume form), i.e. if $\varphi_t$ is the flow of a Hamiltonian system \eqref{eq:Hamilton}, then $\varphi_t^*\omega = \omega$ and $\varphi_t^*\omega^n = \omega^n$. We will show in Section \ref{sec:vol_mech} that this property will still hold true when impacts are present.
%%%%%%%%%%%%%%%%%%%%%%%%%%%%%%%%%%%%%%%%%%%%%%%%%%%%%%%%%%%%%%%%%%%%%%%%
\subsubsection{Nonholonomic mechanics}
We end our preliminary discussion with a brief overview of nonholonomic systems. Constraints in Lagrangian systems manifest as specifying a submanifold $\mathcal{D}\subset TQ$ such that the dynamics are required to evolve on $\mathcal{D}$. Throughout this work, we will assume that $\mathcal{D}$ is a \textit{distribution}, i.e. the constraints are linear in the velocities. For more information on nonholonomic systems, cf. e.g. \cite{bloch2008nonholonomic,neimark1972dynamics}.

For our purposes, the constraint manifold $\mathcal{D}$ will be given by the joint kernels of differential 1-forms, i.e.
\begin{equation*}
	\mathcal{D} = \bigcap_{\alpha=1}^m \, \ker\eta^\alpha = 
	\left\{ (q,\dot{q})\in TQ : \eta^\alpha_q(\dot{q})=0\right\}, \quad \eta^\alpha\in \Omega^1(Q).
\end{equation*}
With these constraints, the equations of motion according to the Lagrange-d'Alembert principle are
\begin{gather*}
	\frac{d}{dt}\frac{\partial L}{\partial\dot{q}} - \frac{\partial L}{\partial q} = \lambda_\alpha {\tau_Q^*}\eta^\alpha, \quad {\tau_Q:TQ\to Q} \\
	\eta^\alpha(\dot{q}) = 0,
\end{gather*}
in the Lagrangian formalism. These can be equivalently described in Hamilton's formalism via
\begin{gather*}
	i_{X_H^\mathcal{D}}\omega = dH + \lambda_\alpha\pi_Q^*\eta^\alpha \\
	P(W^\alpha)(q,p) := \langle p, W^\alpha(q)\rangle = 0,
\end{gather*}
where $W^\alpha = \mathbb{F}L^{-1}\eta^\alpha$ are dual vector fields  and $\pi_Q:T^*Q\to Q$ is the canonical cotangent projection.
A useful matrix that will appear in many of the computations throughout this work is the \textit{constraint mass matrix}.
\begin{definition}
	Let $\mathscr{C} = \{\eta^1,\ldots,\eta^m\}$ be a collection of 1-forms describing the constraint manifold $\mathcal{D}\subset TQ$. Let $W^\alpha := \mathbb{F}L^{-1}\eta^\alpha$ be the corresponding vector fields with a natural Lagrangian. Then, the constraint mass matrix, $\left(m^{\alpha\beta}\right)$ is given by
	\begin{equation*}
		m^{\alpha\beta} = g(W^\alpha,W^\beta) = \eta^\alpha(W^\beta).
	\end{equation*}
	The inverse will be denoted by $\left(m_{\alpha\beta}\right) = \left(m^{\alpha\beta}\right)^{-1}$.
\end{definition}

%%%%%%%%%%%%%%%%%%%%%%%%%%%%%%%%%%%%%%%%%%%%%%%%%%%%%%%%%%%%%%%%%%%%%%%%
\begin{remark}\label{rmk:GNVF}
	The nonholonomic vector field $X_H^\mathcal{D}$ is a vector field on the constraint submanifold $\mathcal{D}^* := \mathbb{F}L(\mathcal{D})$. However, we can extend this to a \textit{global} vector field $\Xi_H^\mathscr{C}$ on the entire space $T^*Q$ such that $\Xi_H^\mathscr{C}|_{\mathcal{D}^*} = X_H^\mathcal{D}$. In \cite{nhvolume}, the global nonholonomic vector field is shown to be given by
	\begin{equation*}
		i_{\Xi_H^\mathscr{C}}\omega = dH -m_{\alpha\beta}\left\{ H,P(W^\alpha)\right\}\pi_Q^*\eta^\beta.
	\end{equation*}
	This will be helpful in the computations in Section \ref{sec:vol_mech} and we will call $\nu_H^\mathscr{C} := i_{\Xi_H^\mathscr{C}}\omega$ the \textit{nonholonomic 1-form}. 
\end{remark}
%%%%%%%%%%%%%%%%%%%%%%%%%%%%%%%%%%%%%%%%%%%%%%%%%%%%%%%%%%%%%%%%%%%%%%%%

One goal of this work is to understand invariant volumes in hybrid systems. 
For unconstrained systems, Liouville's theorem states that they preserve the symplectic form and, consequently, the induced volume form as well. However, nonholonomic systems need not be volume-preserving. Below, we state a nonholonomic version of Liouville's theorem as proved in \cite{nhvolume} (a similar result can be found in \cite{federovnaranjo}). Recall that $\eta^\beta\in\Omega^1(Q)$ define our constraints, $W^\beta = \mathbb{F}L^{-1}\eta^\beta$ are their corresponding vector fields, and for a bundle $M\to Q$, $\Gamma(M)$ is the space of sections.

\begin{theorem}[\cite{nhvolume}]\label{th:NHVol}
	Let $L:TQ\to\mathbb{R}$ be a natural Lagrangian and $\mathcal{D}\subset TQ$ be a regular distribution. Then there exists an invariant volume with density depending only on the configuration variables if and only if there exists $\rho\in\Gamma(\mathcal{D}^0)$ such that $\vartheta_\mathscr{C}+\rho$ is exact where
	\begin{equation*}
		\vartheta_\mathscr{C} = m_{\alpha\beta} \cdot \mathcal{L}_{W^\alpha}\eta^\beta.
	\end{equation*}
	Here, $\mathcal{D}^0\subset T^*Q$ is the annihilator of $\mathcal{D}\subset TQ$, $\{\eta^\beta\}$ is a frame for $\mathcal{D}^0$, $W^\alpha = \mathbb{F}L^{-1}\eta^\alpha$, and $m^{\alpha\beta} = \eta^\alpha(W^\beta)$.
	In particular, suppose that $\vartheta_\mathscr{C}+\rho = dg$. Then the following volume form is preserved:
	\begin{equation*}
		\exp\left(\pi_Q^*g\right)\cdot \mu_\mathscr{C},
	\end{equation*}
	where $\mu_\mathscr{C}$ is the nonholonomic volume form described in \cite{nhvolume}.
\end{theorem}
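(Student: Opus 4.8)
The plan is to convert the existence of the invariant volume into a single divergence (``cohomology'') equation over $Q$ and then read off the stated condition. Recall that a volume form $\mu$ on $\mathcal{D}^*$ is preserved by the flow of a vector field $\Xi$ precisely when $\mathcal{L}_\Xi\mu=0$, i.e. when $\operatorname{div}_\mu\Xi=0$. Any candidate volume whose density relative to the reference $\mu_\mathscr{C}$ depends only on the configuration variables may be written as $e^{\pi_Q^*g}\mu_\mathscr{C}$ for some $g\in C^\infty(Q)$, so I would first record the identity
\[
\mathcal{L}_{\Xi}\!\left(e^{\pi_Q^*g}\mu_\mathscr{C}\right)=\left(\Xi(\pi_Q^*g)+\operatorname{div}_{\mu_\mathscr{C}}\Xi\right)e^{\pi_Q^*g}\mu_\mathscr{C},\qquad \Xi:=\Xi_H^\mathscr{C}.
\]
Hence the volume is invariant if and only if $\Xi(\pi_Q^*g)+\operatorname{div}_{\mu_\mathscr{C}}\Xi=0$ on $\mathcal{D}^*$. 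Since $\Xi$ is a second-order mechanical field we have $\Xi(\pi_Q^*g)=\langle dg,\dot q\rangle$, where $\dot q=\pi_{Q*}\Xi\in\mathcal{D}$ is the base velocity; this already isolates the configuration-space nature of the problem.

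The key step is to compute $\operatorname{div}_{\mu_\mathscr{C}}\Xi$ and show it is, up to sign, the pairing of $\vartheta_\mathscr{C}$ with the base velocity:
\[
\left.\operatorname{div}_{\mu_\mathscr{C}}\Xi\right|_{\mathcal{D}^*}=-\langle\vartheta_\mathscr{C},\dot q\rangle,\qquad \vartheta_\mathscr{C}=m_{\alpha\beta}\,\mathcal{L}_{W^\alpha}\eta^\beta.
\]
Using Remark \ref{rmk:GNVF}, write $i_\Xi\omega=dH+\lambda_\beta\pi_Q^*\eta^\beta$ with $\lambda_\beta=-m_{\alpha\beta}\{H,P(W^\alpha)\}$, so that $\Xi=X_H+Y$ where $i_Y\omega=\lambda_\beta\pi_Q^*\eta^\beta$ is vertical. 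Because $X_H$ is divergence-free for the Liouville volume, the entire divergence is produced by $Y$ together with the discrepancy between $\mu_\mathscr{C}$ and the Liouville form. Differentiating $\lambda_\beta$ in the momenta and collapsing the result with the mass-matrix identities $m^{\alpha\beta}=\eta^\alpha(W^\beta)=g(W^\alpha,W^\beta)$ should produce precisely $\langle m_{\alpha\beta}\mathcal{L}_{W^\alpha}\eta^\beta,\dot q\rangle$. This is the heart of the argument and the step I expect to be the main obstacle: one must invoke the explicit local description of $\mu_\mathscr{C}$ from \cite{nhvolume}, track carefully the restriction to $\mathcal{D}^*$ (the correction term is exactly what renders $\Xi$ tangent to $\mathcal{D}^*$), and verify that all genuine momentum-dependence cancels so that only the velocity-linear term survives.

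With this identity in hand the invariance equation becomes $\langle dg-\vartheta_\mathscr{C},\dot q\rangle=0$. As the point ranges over the fiber $\mathcal{D}^*_q$ the base velocity $\dot q$ sweeps out all of $\mathcal{D}_q$, so the equation holds for every $\dot q\in\mathcal{D}$ if and only if $dg-\vartheta_\mathscr{C}$ annihilates $\mathcal{D}$, i.e. $dg-\vartheta_\mathscr{C}\in\Gamma(\mathcal{D}^0)$. Setting $\rho:=dg-\vartheta_\mathscr{C}\in\Gamma(\mathcal{D}^0)$ gives exactly $\vartheta_\mathscr{C}+\rho=dg$, which is the claimed exactness condition; conversely, any $\rho\in\Gamma(\mathcal{D}^0)$ with $\vartheta_\mathscr{C}+\rho=dg$ makes $dg-\vartheta_\mathscr{C}=\rho$ annihilate $\mathcal{D}$ and hence satisfies the invariance equation. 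Both directions and the explicit preserved volume $e^{\pi_Q^*g}\mu_\mathscr{C}$ then follow at once. I would flag one remaining routine point: confirming that the admissible velocities $\pi_{Q*}\Xi$ do fill out each fiber $\mathcal{D}_q$, which is where regularity of the distribution $\mathcal{D}$ is used.
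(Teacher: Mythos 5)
First, a point of comparison: the paper does not actually prove Theorem \ref{th:NHVol} --- it is imported with a citation from \cite{nhvolume} --- so there is no in-paper proof to measure you against. Your strategy is nonetheless exactly the one used in that reference: write any candidate volume with configuration-dependent density as $e^{\pi_Q^*g}\mu_\mathscr{C}$, reduce invariance to the scalar equation $\Xi(\pi_Q^*g)+\mathrm{div}_{\mu_\mathscr{C}}\Xi=0$ on $\mathcal{D}^*$, identify the divergence as $-\vartheta_\mathscr{C}(\dot q)$, and use the fact that $\dot q=\pi_{Q*}\Xi$ sweeps out all of $\mathcal{D}_q$ over each fiber to convert the pointwise equation into $dg-\vartheta_\mathscr{C}\in\Gamma(\mathcal{D}^0)$. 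The reduction to a $1$-form equation on $Q$, the fiber-sweeping argument (which uses that $\mathbb{F}L$ is a fiberwise isomorphism for a natural $L$ and that $\Xi_H^\mathscr{C}$ is second order), and the two-directional conclusion are all correct as you state them.

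The gap is that the entire analytic content of the theorem is concentrated in the single identity
\begin{equation*}
\left.\mathrm{div}_{\mu_\mathscr{C}}\,\Xi_H^\mathscr{C}\right|_{\mathcal{D}^*}=-\langle \vartheta_\mathscr{C},\dot q\rangle,\qquad \vartheta_\mathscr{C}=m_{\alpha\beta}\,\mathcal{L}_{W^\alpha}\eta^\beta,
\end{equation*}
which you assert ``should'' follow but never establish; everything surrounding it is routine, and this is not. Carrying it out requires (i) passing from $\mu_\mathscr{C}=\iota^*\varepsilon$ with $\sigma_\mathscr{C}\wedge\varepsilon=\omega^n$ to a computation on $T^*Q$, using that $\sigma_\mathscr{C}$ is preserved along $\Xi_H^\mathscr{C}$; (ii) computing $\mathcal{L}_{\Xi}\omega^n$ from $i_\Xi\omega=dH-m_{\alpha\beta}\{H,P(W^\alpha)\}\pi_Q^*\eta^\beta$, which comes down to the fiber derivatives of the multipliers; and (iii) verifying that after contraction with the mass matrix the quadratic-in-$p$ and potential contributions cancel on $\mathcal{D}^*$, leaving exactly the velocity-linear term $m_{\alpha\beta}\left(i_{W^\alpha}d\eta^\beta+dm^{\beta\alpha}\right)(\dot q)$. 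That the divergence is linear in the velocities at all --- so that the problem descends to a $1$-form condition on $Q$ rather than a genuinely momentum-dependent PDE on $\mathcal{D}^*$ --- is precisely the nontrivial structural fact the theorem rests on, and it is the one thing your argument takes for granted. As written, the proposal is a correct reduction of the theorem to an unproven lemma rather than a proof.
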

In particular, we will be interested in whether or not nonholonomic systems with an invariant volume prescribed via  Theorem \ref{th:NHVol} continue to preserve this volume when impacts are present.
%%%%%%%%%%%%%%%%%%%%%%%%%%%%%%%%%%%%%%%%%%%%%%%%%%%%%%%%%%%%%%%%%%%%%%%%
%%%%%%%%%%%%%%%%%%%%%%%%%%%%%%%%%%%%%%%%%%%%%%%%%%%%%%%%%%%%%%%%%%%%%%%%
%%%%%%%%%%%%%%%%%%%%%%%%%%%%%%%%%%%%%%%%%%%%%%%%%%%%%%%%%%%%%%%%%%%%%%%%
%%%%%%%%%%%%%%%%%%%%%%%%%%%%%%%%%%%%%%%%%%%%%%%%%%%%%%%%%%%%%%%%%%%%%%%%
\section{Mechanical impact systems}\label{sec:mechanical_impact}
This section is devoted to fusing the ideas of \S\ref{ssec:hybrid} and \S\ref{ssec:geomech}. Hybrid systems built from mechanical systems have the form $\mathcal{H} = (M,S,X,\Delta)$ where $M = TQ$ or $T^*Q$ depending on Lagrangian/Hamiltonian and $X$ is either \eqref{eq:Lagrange} or \eqref{eq:Hamilton}. The set $S$ is the location of impact and we make an abuse of notation where $S\subset Q$ rather than $S\subset M$ as impacts will depend on \textit{location only}. The final piece of information we need to construct a mechanical hybrid system is the map $\Delta$. In order to construct a meaningful impact map, we make the following assumption (cf. \S 3.5 in \cite{brogliato2016}):
\begin{assumption}\label{ass:impact}
	A mechanical impact is the identity on the base and satisfies variational/Lagrange-d'Alembert principles on the fibers. In particular, the impact map will have the form $\Delta = (Id,\delta)$, e.g. for Hamiltonian systems we have $\pi_Q\circ\Delta = \pi_Q$ where $\pi_Q:T^*Q\to Q$ is the cotangent projection.
\end{assumption}
Before we discuss the construction of the map $\Delta$, we first clear up the notation surrounding $S$. As $S\subset Q$ is an embedded codimension 1 submanifold, it can be (locally) described as the level-set of a smooth function $h:Q\to\mathbb{R}$. This allows us to define the following five sets. 
\begin{enumerate}
	\item $S = \{q\in Q : h(q) = 0\}\subset Q$, 
	\item $\hat{S} = \{(q,\dot{q})\in TQ: h(q)=0,\, dh(\dot{q})<0\}\subset TQ$,
	\item $S^* = \{(q,p)\in T^*Q: h(q) = 0,\, P(\nabla h) < 0\} \subset T^*Q$,
	\item $\hat{S}_\mathcal{D} = \hat{S}\cap \mathcal{D}\subset \mathcal{D}$, and
	\item $S^*_\mathcal{D} = S^*\cap \mathcal{D}^* \subset \mathcal{D}^* = \mathbb{F}L(\mathcal{D})$.
\end{enumerate}
These sets have the following classification: $S$ is the location of impact, $\hat{S}$ (resp. $S^*$) is the impact surface for unconstrained Lagrangian (resp. Hamiltonian) systems, and $\hat{S}_\mathcal{D}$ (resp. $S^*_\mathcal{D}$) is the impact surface for nonholonomic Lagrangian (resp. Hamiltonian) systems. 

For nonholonomic impacts, issues can arise when $dh\in\mathcal{D}^0$ which has the impact surface as a constraint. To circumnavigate this issue, we make the following assumption.
\begin{assumption}[Nontrivial impact condition]
	Suppose that $S\subset Q$ is given by $S = h^{-1}(0)$. Then $dh|_S\not\in \mathcal{D}^0|_S$.
\end{assumption}
%%%%%%%%%%%%%%%%%%%%%%%%%%%%%%%%%%%%%%%%%%%%%%%%%%%%%%%%%%%%%%%%%%%%%%%%
\subsection{Holonomic impacts}
To derive $\Delta$ for unconstrained mechanical systems, we begin with the observation that the Euler-Lagrange equations are variational. With this, we make the assumption that the impact map is as well (cf. Assumption \ref{ass:impact}). This is realized by the Weierstrass-Erdmann corner conditions, cf. \S3.5 of \cite{brogliato2016} or \S4.4 of \cite{kirkOptimal}:
\begin{equation}\label{eq:lagrange_impact}
	\begin{split}
		\mathbb{F}L^+ - \mathbb{F}L^- &= \varepsilon \cdot dh, \\
		L^+ - \langle \mathbb{F}L^+, \dot{q}^+ \rangle &= L^- - \langle \mathbb{F}L^-,\dot{q}^- \rangle,
	\end{split}
\end{equation}
where the multiplier $\varepsilon$ is chosen such that both equations are satisfied. These equations have a cleaner interpretation on the Hamiltonian side:
\begin{equation}\label{eq:hamiltonian_impact}
	\begin{split}
		p^+ &= p^- + \varepsilon \cdot dh, \\
		H^+ &= H^-,
	\end{split}
\end{equation}
i.e. energy is conserved and the change in momentum is perpendicular to the impact surface.

In the case where $L$ is natural, the corner conditions can be explicitly solved. Recall that $\nabla h = dh^\sharp$, or $dh = g(\nabla h,\cdot)$. 
\begin{theorem}
	Given a natural Lagrangian $L(q,\dot{q}) = \frac{1}{2}g_q(\dot{q},\dot{q}) - V(q)$, the impact map $\Delta:\hat{S}\to TQ$ with $(q,\dot{q})\mapsto (q,\delta(q,\dot{q}))$ is given by
	\begin{equation*}
		\delta(q,\dot{q}) = \dot{q} - 2\frac{dh(\dot{q})}{g(\nabla h,\nabla h)} \nabla h.
	\end{equation*}
\end{theorem}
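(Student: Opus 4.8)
The plan is to solve the Weierstrass--Erdmann corner conditions \eqref{eq:lagrange_impact} explicitly by exploiting the two simplifications that naturalness of $L$ provides. First I would rewrite the momentum condition. Since $\mathbb{F}L(\dot{q}) = g(\dot{q},\cdot)$ for a natural Lagrangian, the first line of \eqref{eq:lagrange_impact} reads $g(\dot{q}^+,\cdot) - g(\dot{q}^-,\cdot) = \varepsilon\, dh = \varepsilon\, g(\nabla h,\cdot)$. Nondegeneracy of the Riemannian metric $g$ then lets me cancel $g(\,\cdot\,,\cdot)$ and conclude that the velocity jump lies along the gradient of the constraint function, namely
\begin{equation*}
	\dot{q}^+ = \dot{q}^- + \varepsilon\,\nabla h .
\end{equation*}
So the entire impact is determined by the single scalar multiplier $\varepsilon$, and it remains to pin down $\varepsilon$ from the energy condition.

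Next I would decode the second line of \eqref{eq:lagrange_impact}. Using $\langle \mathbb{F}L,\dot{q}\rangle = g(\dot{q},\dot{q})$ and $L = \tfrac{1}{2}g(\dot{q},\dot{q}) - V$, the quantity $L - \langle \mathbb{F}L,\dot{q}\rangle$ equals $-\bigl(\tfrac{1}{2}g(\dot{q},\dot{q}) + V\bigr) = -E$, the negative of the total energy. Because the impact is the identity on the base (Assumption \ref{ass:impact}), the configuration $q$, and hence $V(q)$, is unchanged across the impact, so the energy condition collapses to preservation of kinetic energy, $g(\dot{q}^+,\dot{q}^+) = g(\dot{q}^-,\dot{q}^-)$. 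Substituting $\dot{q}^+ = \dot{q}^- + \varepsilon\,\nabla h$ and expanding by bilinearity of $g$ yields
\begin{equation*}
	2\varepsilon\, g(\dot{q}^-,\nabla h) + \varepsilon^2\, g(\nabla h,\nabla h) = 0,
\end{equation*}
a quadratic in $\varepsilon$ that factors as $\varepsilon\bigl(2\,g(\dot{q}^-,\nabla h) + \varepsilon\, g(\nabla h,\nabla h)\bigr) = 0$.

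Finally I would select the correct root. The factorization gives $\varepsilon = 0$ or $\varepsilon = -2\,g(\dot{q}^-,\nabla h)/g(\nabla h,\nabla h)$; note $g(\nabla h,\nabla h) > 0$ since $g$ is Riemannian and $\nabla h \neq 0$ along the regular level set $S$, so the denominator never vanishes. Recognizing $g(\dot{q}^-,\nabla h) = dh(\dot{q}^-)$ and discarding the spurious root $\varepsilon = 0$ (which returns the identity and is not a genuine impact) produces $\varepsilon = -2\,dh(\dot{q}^-)/g(\nabla h,\nabla h)$, and back-substitution into $\dot{q}^+ = \dot{q}^- + \varepsilon\nabla h$ gives exactly the claimed $\delta(q,\dot{q})$. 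The only point requiring genuine care — the main ``obstacle'' — is the justification for rejecting $\varepsilon = 0$ and retaining the nontrivial root: here I would invoke the definition of $\hat{S}$, on which $dh(\dot{q}^-) < 0$, so the nonzero root is strictly positive and the post-impact velocity satisfies $dh(\dot{q}^+) = -dh(\dot{q}^-) > 0$, confirming that the map sends an incoming (inward-pointing) velocity to an outgoing one, consistent with the smoothness conditions (A.1)--(A.3) and with $\Delta$ being a bona fide reset rather than the identity.
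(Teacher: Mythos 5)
Your proposal is correct and follows exactly the derivation the paper intends (the paper states the theorem without written proof, noting only that the corner conditions \eqref{eq:lagrange_impact} ``can be explicitly solved'' for natural $L$): reduce the momentum condition to $\dot{q}^+ = \dot{q}^- + \varepsilon\nabla h$ via nondegeneracy of $g$, reduce the energy condition to conservation of kinetic energy since $q$ is fixed, solve the resulting quadratic, and discard the root $\varepsilon = 0$ because on $\hat{S}$ one has $dh(\dot{q}^-)<0$ and the reset must produce an outgoing velocity. No gaps.
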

%%%%%%%%%%%%%%%%%%%%%%%%%%%%%%%%%%%%%%%%%%%%%%%%%%%%%%%%%%%%%%%%%%%%%%%%
\subsection{Nonholonomic impacts}
Unlike unconstrained systems, nonholonomic systems are no longer variational. As such, the Weierstrass-Erdmann conditions no longer apply. However, we can instead utilize the Lagrange-d'Alembert principle. This leads to a modified version of \eqref{eq:lagrange_impact}, \cite{bpenny}:
\begin{equation}\label{eq:NH_corner}
	\begin{split}
		\mathbb{F}L^+ - \mathbb{F}L^- &= \lambda_k\cdot\eta^k + \varepsilon\cdot dh, \\
		L^+ - \langle \mathbb{F}L^+,\dot{q}^+\rangle &= L^- - \langle \mathbb{F}L^-,\dot{q}^-\rangle, \\
		\eta^k(\dot{q}^+) ^+ &= 0.
	\end{split}
\end{equation}
Again, when the Lagrangian is natural, the nonholonomic corner conditions can be explicitly solved.
\begin{theorem}
	Suppose that $W^\alpha = \mathbb{F}L^{-1}\eta^\alpha$. Then the nonholonomic impact map $\Delta^\mathcal{D}:\hat{S}_\mathcal{D}\to \mathcal{D}$ with $(q,\dot{q})\mapsto (q,\delta(q,\dot{q}))$ is given by
	\begin{equation}\label{eq:restricted_NH_form}
		\delta(q,\dot{q}) = \dot{q} + \lambda_k \cdot W^k + \varepsilon\cdot \nabla h,
	\end{equation}
	where
	\begin{equation}\label{eq:restricted_NH_impact}
		\begin{split}
			\varepsilon &= \frac{-2\cdot dh(\dot{q})}{ dh(\nabla h) - m_{\alpha\beta}\cdot dh(W^\alpha)dh(W^\beta)}, \\
			\lambda_k &= \frac{2\cdot m_{k\ell}\cdot dh(W^\ell)dh(\dot{q})}{ dh(\nabla h) - m_{\alpha\beta}\cdot dh(W^\alpha)dh(W^\beta) }.
		\end{split}
	\end{equation}
\end{theorem}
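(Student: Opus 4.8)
The plan is to solve the three nonholonomic corner conditions explicitly by exploiting naturality of $L$: the first condition fixes the \emph{form} of $\delta$, the $m$ constraint equations express the $\lambda_k$ in terms of the single scalar $\varepsilon$, and the lone energy equation then pins down $\varepsilon$. Throughout I write $\dot q^-=\dot q$ for the pre-impact velocity (the argument of $\delta$) and $\dot q^+=\delta(q,\dot q)$ for the post-impact velocity.

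First I would use that for a natural Lagrangian $\mathbb{F}L(\dot q)=g(\dot q,\cdot)$, so that $\eta^k=g(W^k,\cdot)$ and $dh=g(\nabla h,\cdot)$. The first corner condition $\mathbb{F}L^+-\mathbb{F}L^-=\lambda_k\eta^k+\varepsilon\,dh$ then reads $g(\dot q^+-\dot q^-,\cdot)=g(\lambda_kW^k+\varepsilon\nabla h,\cdot)$, and nondegeneracy of $g$ gives at once $\dot q^+=\dot q^-+\lambda_kW^k+\varepsilon\nabla h$, which is precisely the claimed form \eqref{eq:restricted_NH_form}. This step is essentially free; the content lies in determining the coefficients. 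Next I would impose the post-impact constraint $\eta^k(\dot q^+)=0$. Setting $u:=\lambda_kW^k+\varepsilon\nabla h$ and using that the pre-impact state lies on $\mathcal{D}$, i.e. $\eta^k(\dot q^-)=g(W^k,\dot q^-)=0$, together with $g(W^k,W^\ell)=m^{k\ell}$ and $g(W^k,\nabla h)=dh(W^k)$, the constraint collapses to the linear system $\lambda_\ell m^{k\ell}+\varepsilon\,dh(W^k)=0$. Inverting with $(m_{\alpha\beta})$ yields $\lambda_k=-\varepsilon\,m_{k\ell}\,dh(W^\ell)$, so that every multiplier is proportional to $\varepsilon$.

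The remaining scalar comes from the energy condition. For a natural Lagrangian $L-\langle\mathbb{F}L,\dot q\rangle=-\tfrac12 g(\dot q,\dot q)-V(q)$, and since the impact fixes the base point $q$ the potential cancels; the second corner condition therefore reduces to conservation of kinetic energy $g(\dot q^+,\dot q^+)=g(\dot q^-,\dot q^-)$, equivalently $2g(\dot q^-,u)+g(u,u)=0$. Here $g(\dot q^-,u)=\varepsilon\,dh(\dot q^-)$, again using $g(W^k,\dot q^-)=0$. Expanding $g(u,u)$ and substituting $\lambda_k=-\varepsilon\,m_{k\ell}\,dh(W^\ell)$, one factors out the spurious root $\varepsilon=0$ (the no-impact case) and is left with a linear equation for $\varepsilon$ whose solution is the first line of \eqref{eq:restricted_NH_impact}; back-substitution then gives $\lambda_k$.

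I expect the main obstacle to be the algebra in $g(u,u)$: the three terms $\lambda_k\lambda_\ell m^{k\ell}$, $2\varepsilon\lambda_k\,dh(W^k)$, and $\varepsilon^2\,dh(\nabla h)$ must be combined using the contraction identity $m^{k\ell}m_{ka}m_{\ell b}=m_{ab}$, after which the first two terms partially cancel to produce the compact denominator $dh(\nabla h)-m_{\alpha\beta}\,dh(W^\alpha)\,dh(W^\beta)$. It is worth recording separately that this denominator equals $g(\nabla h,\nabla h)$ minus the squared $g$-norm of the orthogonal projection of $\nabla h$ onto $\mathrm{span}\{W^\alpha\}$, hence is strictly positive precisely when $\nabla h\notin\mathrm{span}\{W^\alpha\}$, i.e. when $dh\notin\mathcal{D}^0$. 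This is exactly the Nontrivial impact condition, which therefore guarantees that the formulas \eqref{eq:restricted_NH_impact} are well defined and that a genuine ($\varepsilon\neq 0$) impact exists whenever $dh(\dot q)\neq 0$.
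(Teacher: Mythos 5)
Your proposal is correct, and since the paper states this theorem without proof, your computation — solving the three Lagrange--d'Alembert corner conditions directly, using $\eta^k(\dot q^-)=0$ on $\hat S_{\mathcal D}$ to reduce the constraint equations to $\lambda_k=-\varepsilon\,m_{k\ell}\,dh(W^\ell)$ and the energy condition to a quadratic in $\varepsilon$ with the nontrivial root giving \eqref{eq:restricted_NH_impact} — is evidently the intended argument. Your closing observation that the denominator equals $\lVert(I-P)\nabla h\rVert_g^2$ and is therefore positive exactly under the Nontrivial impact condition is a worthwhile addition not made explicit in the paper.
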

\begin{remark}\label{rmk:global_impact}
	Strictly speaking, \eqref{eq:restricted_NH_impact} is only defined on $\mathcal{D}\subset TQ$. However, we can define a \textit{global} version (which mimics the global aspect of \cite{nhvolume}). The constraints are given by specifying the submanifold $\mathcal{D}$ which is the joint zero level-set of the $\eta^\beta$. However, $\mathcal{D}$ does not \textit{uniquely} determine the 1-forms $\eta^\beta$. 
	We refer to this arbitrary choice of writing the constraints as a \textit{realization}. In this sense, we let $\mathscr{C} = \{\eta^1,\ldots,\eta^m\}$ be our choice for representing the constraints. Then we can define a \textit{global} impact map $\Delta^\mathscr{C}:\hat{S}\to TQ$ with form \eqref{eq:restricted_NH_form} but with multipliers
	\begin{equation}\label{eq:global_NH_impact}
		\begin{split}
			\varepsilon &= \frac{2m_{\alpha\beta}\cdot dh\left(W^\beta\right)\eta^\alpha(\dot{q}) - 2\cdot dh(\dot{q})}
			{dh(\nabla h) - m_{\alpha\beta}\cdot dh(W^\alpha)dh(W^\beta)} \\
			\lambda_k &= -2m_{k\ell}\cdot dh(W^\ell) \cdot
			\frac{m_{\alpha\beta}\cdot dh(W^\beta)\eta^\alpha(\dot{q}) - dh(\dot{q})}
			{dh(\nabla h) - m_{\alpha\beta}\cdot dh(W^\alpha)dh(W^\beta)}.
		\end{split}
	\end{equation}
	Notice that upon restriction, $\Delta^\mathscr{C}|_{\hat{S}_\mathcal{D}} = \Delta^\mathcal{D}$. This will result in two different versions of nonholonomic impact systems: the local version $\mathcal{H}^\mathcal{D} = (\mathcal{D},\hat{S}_{\mathcal{D}}, X^\mathcal{D}, \Delta^\mathcal{D})$, and a global version $\mathcal{H}^\mathscr{C} = (TQ,\hat{S},\Xi_H^\mathscr{C},\Delta^\mathscr{C})$. See Remark \ref{rmk:GNVF} and \cite{nhvolume} for the global nonholonomic vector field $\Xi_H^\mathscr{C}$.
\end{remark}
%%%%%%%%%%%%%%%%%%%%%%%%%%%%%%%%%%%%%%%%%%%%%%%%%%%%%%%%%%%%%%%%%%%%%%%%
\subsection{Intrinsic Formulation of Impacts}
Below, we present an intrinsic view for both the holonomic and nonholonomic impact relations, \eqref{eq:hamiltonian_impact} and \eqref{eq:NH_corner}. Both versions will be stated from a Hamiltonian point of view.
%%%%%%%%%%%%%%%%%%%%%%%%%%%%%%%%%%%%%%%%%%%%%%%%%%%%%%%%%%%%%%%%%%%%%%%%
\subsubsection{Holonomic Impacts}
Let $(T^*Q,S^*,X_H,\Delta)$ be an impact Hamiltonian system. 
\begin{theorem}
	The corner conditions \eqref{eq:hamiltonian_impact} are equivalent to
	\begin{equation}\label{eq:intrinsic_impact}
		\left( \mathrm{Id}\times\Delta\right)^*\vartheta_H = \iota^*\vartheta_H,
	\end{equation}
	where $\iota:\mathbb{R}\times S^*\hookrightarrow \mathbb{R}\times T^*Q$ is the inclusion and $\vartheta_H$ is the action form,
	\begin{equation*}
		\vartheta_H = p_i\cdot dx^i - H\cdot dt \in \Omega^1\left(\mathbb{R}\times T^*Q\right).
	\end{equation*}
\end{theorem}
\begin{proof}
	This follows directly from choosing local coordinates such that impact occurs when the last coordinate vanishes.
\end{proof}
This seems to imply that the action form is preserved across impacts; as will be seen in Proposition \ref{prop:unconst_hamilt_volume}, the action form is only a relative invariant.

We obtain the following intrinsic description of impact Hamiltonian mechanics.
\begin{equation*}
	\begin{cases}
		i_X\omega - dH = 0, & (x,p)\not\in S^* \\
		\left(\mathrm{Id}\times\Delta\right)^*\vartheta_H - \iota^*\vartheta_H = 0, & (x,p)\in S^*.
	\end{cases}
\end{equation*}
\begin{remark}
	The intrinsic corner condition \eqref{eq:intrinsic_impact} also describes the case of moving impacts. If $\tilde{S}\subset \mathbb{R}\times Q$ is a time-dependent surface described via $S = \{h(t,x)=0\}$, then \eqref{eq:intrinsic_impact} states that
	\begin{equation*}
		\begin{split}
			\left( p_i^+ - p_i^- \right) &= \varepsilon\cdot \frac{\partial h}{\partial x^i} \\
			-\left( H^+ - H^- \right) &= \varepsilon\cdot \frac{\partial h}{\partial t}.
		\end{split}
	\end{equation*}
\end{remark}
%%%%%%%%%%%%%%%%%%%%%%%%%%%%%%%%%%%%%%%%%%%%%%%%%%%%%%%%%%%%%%%%%%%%%%%%
\subsubsection{Nonholonomic Impacts}
We first present an intrinsic form on the continuous equations of motion in a nonholonomic system, cf. \cite{monforte2004geometric}.
\begin{equation*}
	\left.\left(i_{X_H^\mathcal{D}}\omega - dH\right)\right|_{\mathcal{D}^*} \in \mathcal{F}^\circ,
\end{equation*}
where $\mathcal{F}^\circ\subset T^*(T^*Q)|_{\mathcal{D}^*}$ is the Chetaev bundle given by
\begin{equation*}
	\mathcal{F}^\circ = \pi_Q^*\left(\mathcal{D}^\circ\right) = \mathcal{C}^*\left( (T\mathcal{D}^*)^\circ\right),
\end{equation*}
where $\mathcal{C}:T(T^*Q)\to T(T^*Q)$ is the $\mathbb{F}L$-related almost-tangent structure, \cite{nhvolume}. We specialize the Chetaev bundle to be along impacts via its restriction
\begin{equation*}
	\mathcal{F}_S^\circ = \mathcal{F}^\circ \cap \left.T^*\left(T^*Q\right)\right|_{S^*_\mathcal{D}}.
\end{equation*}
\begin{theorem}
	The nonholonomic corner conditions, \eqref{eq:NH_corner}, are equivalent to
	\begin{equation}
		\begin{split}
			\left(\mathrm{Id}\times\Delta\right)^*\vartheta_H - \iota^*\vartheta_H &\in \tau^*\mathcal{F}^\circ_S, \\
			\eta\left(\dot{q}^+\right) &= 0,
		\end{split}
	\end{equation}
	where $\tau:\mathbb{R}\times T^*Q\to T^*Q$ is the projection into the second component.
\end{theorem}
This provides us with the intrinsic description of impact nonholonomic Hamiltonian mechanics.
\begin{equation*}
	\begin{cases}
		\left.\left( i_{X_H^\mathcal{D}}\omega - dH\right)\right|_{\mathcal{D}^*} \in \mathcal{F}^\circ, & (x,p)\not\in S^*_\mathcal{D} \\
		\left(\mathrm{Id}\times\Delta\right)^*\vartheta_H - \iota^*\vartheta_H \in \tau^*\mathcal{F}_S^\circ, & (x,p)\in S^*_\mathcal{D},
	\end{cases}
\end{equation*}
along with satisfying the constraints.
%%%%%%%%%%%%%%%%%%%%%%%%%%%%%%%%%%%%%%%%%%%%%%%%%%%%%%%%%%%%%%%%%%%%%%%%
\subsection{Regularity of mechanical hybrid systems}
This subsection proves that mechanical hybrid systems are smooth as per Definition \ref{def:smooth}. We will prove that only unconstrained mechanical systems are smooth as the nonholonomic case follows similarly.
\begin{proposition}
	Let $\mathcal{H} = (T^*Q,S^*,X_H,\Delta)$ be a hybrid Hamiltonian system. Then $\mathcal{H}$ is smooth.
\end{proposition}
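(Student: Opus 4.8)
The plan is to verify that the hybrid Hamiltonian system $\mathcal{H} = (T^*Q,S^*,X_H,\Delta)$ satisfies the hypotheses of Definition \ref{def:smooth}, namely axioms (H.1)--(H.5) together with the regularity conditions (A.1) and (A.2). Since the continuous dynamics $X_H$ and the impact map $\Delta$ are built from a natural Lagrangian, most of the axioms follow directly from the explicit constructions in \S\ref{ssec:geomech} and \S\ref{sec:mechanical_impact}. The heart of the matter is to confirm the two transversality-type conditions (A.1) and (A.2), since (H.1)--(H.4) are immediate (the state space $T^*Q$ is a smooth manifold, $X_H$ is a smooth vector field given by Hamilton's equations, and $\Delta$ is the smooth map guaranteed by the explicit impact formulas), and (H.5) should reduce to a short computation about the sign of $P(\nabla h)$ on $S^*$ versus its image.

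First I would record the description of the impact surface $S^* = \{(q,p) : h(q)=0,\ P(\nabla h)<0\}$ and compute the Hamiltonian vector field's action on the defining function $h\circ\pi_Q$. The key observation is that, because $\Delta$ is the identity on the base (Assumption \ref{ass:impact}), and because the impact reflects momentum across the surface, I expect $P(\nabla h)$ to change sign under $\Delta$: the pre-impact state has $P(\nabla h)<0$ (trajectory moving into the obstacle) while the post-impact state has $P(\nabla h)>0$ (trajectory moving away). This sign flip is exactly what gives (H.5): $S^*\cap\Delta(S^*)=\emptyset$ since one set has $P(\nabla h)<0$ and the other $P(\nabla h)>0$, and the closures meet only where $P(\nabla h)=0$, which together with $h=0$ cuts out a codimension-$2$ set. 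This same computation simultaneously establishes the analogue of (A.3) informally, that impacts push the trajectory off $S^*$.

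For (A.2), the transversality condition $T_xM = T_xS^* \oplus X_H(x)\mathbb{R}$, I would compute $dH(\cdot)$ paired against the normal direction to $S^*$, or equivalently show $X_H(x)\notin T_xS^*$. Since $S^*$ is (locally) a level set involving $h\circ\pi_Q$, transversality amounts to checking that $X_H$ has nonzero component in the $dh$-direction, i.e. $\mathcal{L}_{X_H}(h\circ\pi_Q)\neq 0$ on $S^*$. Using Hamilton's equations, $\mathcal{L}_{X_H}(h\circ\pi_Q) = \{h\circ\pi_Q, H\}$, which for a natural Hamiltonian evaluates (up to the metric) to $P(\nabla h)$; this is strictly negative on $S^*$ by definition of $S^*$, so transversality holds. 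For (A.1), I would argue that points of $\overline{S^*}\setminus S^*$ are precisely those with $h=0$ and $P(\nabla h)=0$ (the grazing set); the flow transversality just established, combined with the fact that $\frac{d}{dt}P(\nabla h)$ is controlled along trajectories, should show that a trajectory touching this boundary set immediately leaves the region $h=0$ rather than re-entering $S^*$.

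The main obstacle I anticipate is the careful bookkeeping in (A.1) near the grazing set $\overline{S^*}\setminus S^*$: establishing that the trajectory genuinely avoids re-entry requires understanding the second-order behavior of $h$ along the continuous flow, not merely first-order transversality. One must rule out trajectories that are tangent to $S^*$ yet curve back into it, and this typically needs a Taylor expansion of $t\mapsto h(\pi_Q(\varphi(t,x)))$ to second order together with a sign condition on the curvature term. Everything else reduces to the sign analysis of $P(\nabla h)$ and the smoothness of the Legendre transform for natural Lagrangians, both of which are already in hand from the preliminaries.
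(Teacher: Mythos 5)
Your verification of (H.1)--(H.5) and (A.2) matches the paper's proof: (H.5) follows from the sign flip of $P(\nabla h)$ under $\Delta$, so that $S^*$ and $\Delta(S^*)$ are disjoint and their closures meet only on the codimension-$2$ set $\{h = P(\nabla h) = 0\}$, and (A.2) is exactly the transversality of the base curve to $S$, which you make explicit via $\mathcal{L}_{X_H}(h\circ\pi_Q) = \{h\circ\pi_Q,H\} = P(\nabla h) < 0$ on $S^*$.

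Where you diverge is (A.1), and the divergence points to a missing idea rather than mere extra caution. You anticipate needing a second-order Taylor expansion of $t\mapsto h(q(t))$ plus ``a sign condition on the curvature term'' to rule out trajectories that graze $S$ and curve back into it. No such curvature hypothesis is available for a general natural Hamiltonian and a general impact surface, and none is needed: the paper closes (A.1) with a purely first-order observation that you have already set up but do not exploit. The set $S^*$ is cut out by the \emph{strict} inequality $P(\nabla h)<0$ in addition to $h=0$; so if the trajectory were to lie in $S^*$ for $\delta$ ranging over an interval $(0,\varepsilon)$, then $h(q(\delta))\equiv 0$ there while $\tfrac{d}{dt}h(q(\delta)) = P(\nabla h) < 0$, i.e.\ the base curve would have to cross $S$ transversely at each such $\delta$ while remaining inside $h^{-1}(0)$ --- a contradiction. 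In other words, re-entry into $S^*$ itself carries the transversality you are trying to establish; the tangential ``scuff'' scenario you worry about lands in $\overline{S^*}\setminus S^*$, not in $S^*$, and so triggers no impact. (To be fair, the paper's argument as written only treats the case where the trajectory sits in $S^*$ on a full interval rather than on a sequence of times accumulating at the grazing instant, so your instinct that something delicate happens near $\overline{S^*}\setminus S^*$ is not unfounded; but the mechanism that resolves it is the strict inequality built into the definition of $S^*$, not a curvature bound.)
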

\begin{proof}
	We need to show that $\mathcal{H}$ satisfies (H.1)-(H.5) along with (A.1) and (A.2). Conditions (H.1)-(H.4) are immediate. For (H.5), we notice that
	\begin{equation*}
		\begin{split}
			S^* &= \{ (q,p)\in T^*Q : h(q)=0,\, P(\nabla h)<0\}, \\
			\Delta(S^*) &= \{(q,p) \in T^*Q:h(q)=0,\, P(\nabla h) > 0\}, \\
			\overline{S^*}\cap\overline{\Delta(S^*)} &= \{(q,p)\in T^*Q: h(q) = P(\nabla h) = 0\}.
		\end{split}
	\end{equation*}
	Therefore, $S^*\cap\Delta(S^*)=\emptyset$ and $\overline{S^*}\cap\overline{\Delta(S^*)}$ has codimension 2 so (H.5) is satisfied.
	
	For (A.1), assume that $(q(0),p(0))\in \overline{S^*}\cap\overline{\Delta(S^*)}=\overline{S^*}\setminus S^*$ and that there exists $\varepsilon>0$ such that for all $\delta\in (0,\varepsilon)$, we have $(q(\delta),p(\delta))\in S^*$. Since $q(\delta)\in S$ and $P(\delta h)(q(\delta),p(\delta)) <0$, $q(t)$ must intersect $S$ transversely at $\delta$. This leads to a contradiction.
	
	To finish the proof, we need to show that for $(q,p)\in S^*$, we have the direct sum:
	\begin{equation*}
		T_{(q,p)}T^*Q = T_{(q,p)}S^* \oplus X_H\cdot\mathbb{R},
	\end{equation*}
	i.e. $X_H$ is not tangent to $S^*$. This follows from similar reasoning to (A.1); let $\gamma(t)$ be a base curve of $X_H$, then $\gamma$ intersects $S$ transversely. This gives us (A.2) and we are done.
\end{proof}
%%%%%%%%%%%%%%%%%%%%%%%%%%%%%%%%%%%%%%%%%%%%%%%%%%%%%%%%%%%%%%%%%%%%%%%%
\subsection{Refraction}
The intrinsic impact condition \eqref{eq:intrinsic_impact} can also describe refraction. Let $Q$ be a smooth manifold with separating hyper-surface $S$ given by the zero level-set of a function $h:Q\to\mathbb{R}$. Partition $Q$ into
\begin{equation*}
	Q^+ = \left\{ q\in Q: h(q) > 0\right\}, \quad Q^- = \left\{ q\in Q : h(q) < 0\right\}.
\end{equation*}
Endow each piece with a distinct Hamiltonian to obtain two Hamiltonian systems, $(T^*Q^+,H^+)$ and $(T^*Q^-,H^-)$. The variational reset map from exiting $Q^+$ and entering $Q^-$ is given by
\begin{equation}\label{eq:refracted}
	\left(\mathrm{Id}\times\Delta\right)^*\vartheta_{H^+} = \iota^*\vartheta_{H^-}.
\end{equation}
\begin{example}[Sphere in the Plane]\label{ex:planar_sphere}
	Suppose that $Q=\mathbb{R}^2$ is the plane. Outside the circle with radius 1/2, the kinetic energy is given by the flat metric while it is spherical within the circle, i.e. $h = x^2+y^2-1/4$ and
	\begin{equation*}
		H^+ = \frac{1}{2}\left(p_x^2+p_y^2\right), \quad H^- = \frac{1}{2}
		\begin{bmatrix}
			p_x & p_y
		\end{bmatrix}
		\cdot M^{-1}\cdot \begin{bmatrix}
			p_x \\ p_y
		\end{bmatrix},
	\end{equation*}
	where
	\begin{equation*}
		M = \left[\begin{array}{cc}
			1 - \dfrac{x^2}{1-x^2-y^2} & \dfrac{xy}{1-x^2-y^2} \\
			\dfrac{xy}{1-x^2-y^2} & 1 - \dfrac{y^2}{1-x^2-y^2}
		\end{array}\right].
	\end{equation*}
\end{example}
Solving \eqref{eq:refracted} produces two solutions. One solution corresponds to actual refraction while the other is reflection. Notice that the reflection solution is invalid as it preserves the \textit{wrong} energy. See Fig \ref{fig:circle_refraction}.
%%%%%%%%%%%%%%%%%%%%%%%%%%%%%%%%%%%%%%%%%%%%%%%%%%%%%%%%%%%%%%%%%%%%%%%%
\begin{figure}
	\begin{subfigure}[t]{.48\columnwidth}
		\centering
		\includegraphics[width=\linewidth]{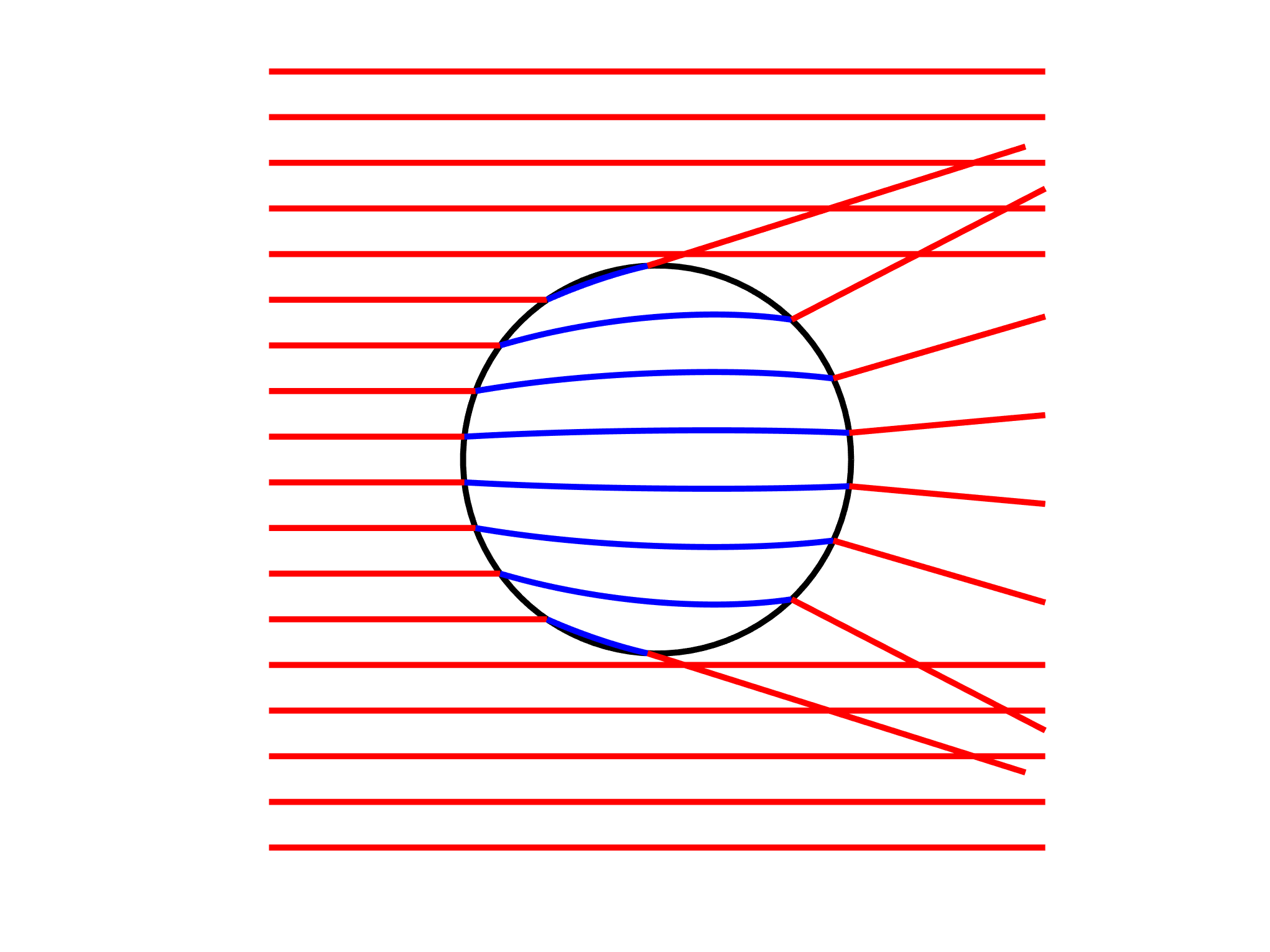}
	\end{subfigure}
	\hfill
	\begin{subfigure}[t]{.48\columnwidth}
		\centering
		\includegraphics[width=\linewidth]{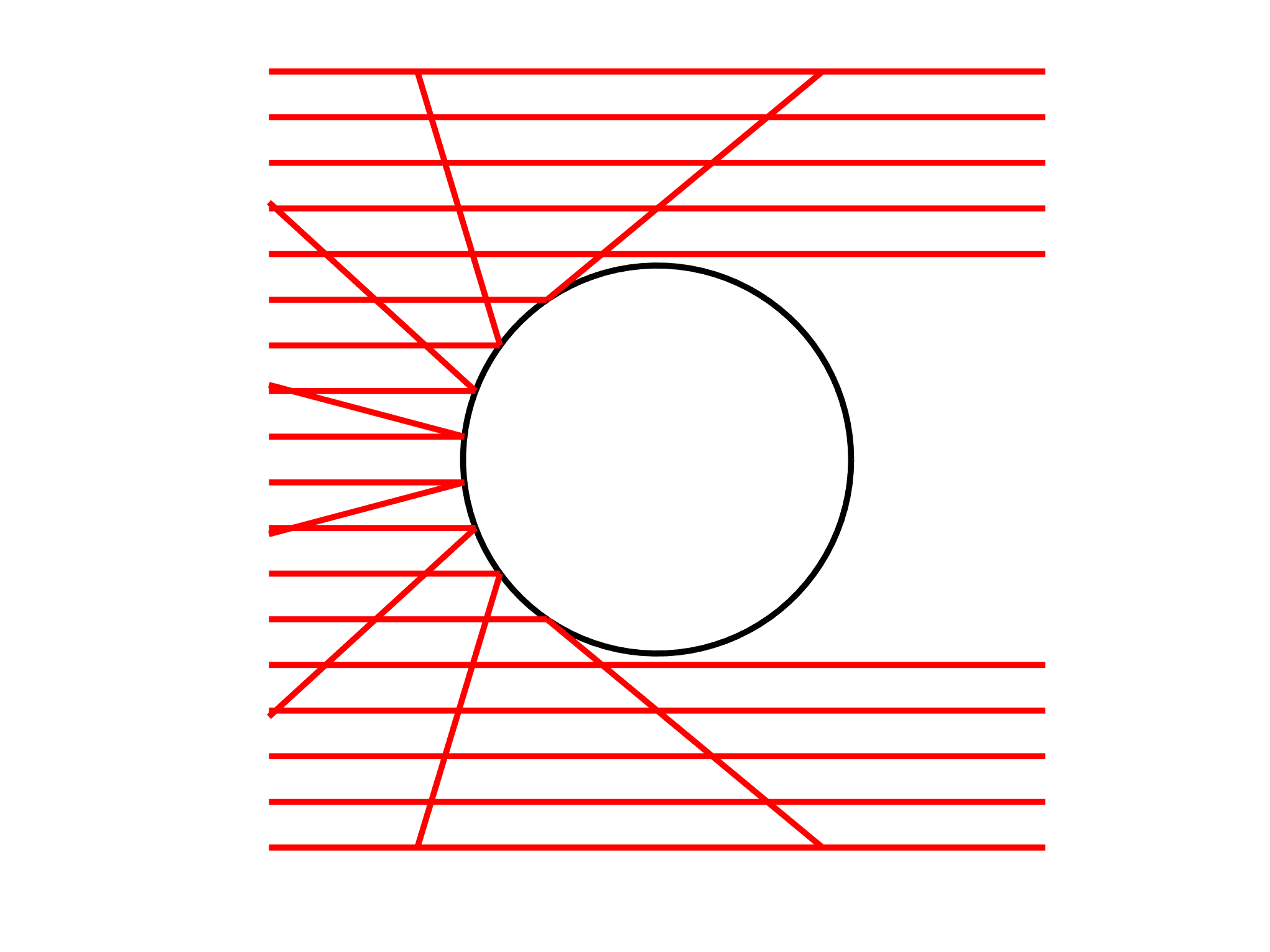}
	\end{subfigure}
	\caption{Left: Trajectories under the ``refraction'' solution to \eqref{eq:refracted} in Example \ref{ex:planar_sphere}. Right: Trajectories under the ``reflection'' solution to \eqref{eq:refracted} in Example \ref{ex:planar_sphere}. Initial conditions are taken to be horizontal; $p_x(0)=1$ and $p_y(0)=0$.
	}
	\label{fig:circle_refraction}
\end{figure}
%%%%%%%%%%%%%%%%%%%%%%%%%%%%%%%%%%%%%%%%%%%%%%%%%%%%%%%%%%%%%%%%%%%%%%%%
%%%%%%%%%%%%%%%%%%%%%%%%%%%%%%%%%%%%%%%%%%%%%%%%%%%%%%%%%%%%%%%%%%%%%%%%
%%%%%%%%%%%%%%%%%%%%%%%%%%%%%%%%%%%%%%%%%%%%%%%%%%%%%%%%%%%%%%%%%%%%%%%%
%%%%%%%%%%%%%%%%%%%%%%%%%%%%%%%%%%%%%%%%%%%%%%%%%%%%%%%%%%%%%%%%%%%%%%%%
\section{Hybrid-invariant differential forms}\label{sec:h_diff_forms}
The fundamental goal of this work is to answer the following question: if $\alpha\in\Omega^m(M)$ is a differential form and $\varphi_t^\mathcal{H}$ is the flow of a hybrid system, does $\left(\varphi_t^\mathcal{H}\right)^*\alpha = \alpha$? As this requires differentiability, we will tacitly assume that $\Delta(S)\subset M$ is a smooth embedded submanifold. Also, the results in this section do not require that the hybrid system be mechanical; this is reserved for the following section.

It would seem natural to want $\mathcal{L}_X\alpha = 0$ and $\Delta^*\alpha = \alpha$. However, this does not make sense. The impact map $\Delta$ and the form $\alpha$ do not have the same domains; $\Delta$ is a function on $S$ while $\alpha$ is a form on $M$. This leads to the idea of the \textit{augmented differential}, cf. \cite{fslip}.
\begin{definition}
	Let $\mathcal{H}=(M,S,X,\Delta)$ be a hybrid system and $x\in S$. Then the {linear} map $\Delta_*^X:T_xM\to T_{\Delta(x)}M$ is called the augmented differential where
	\begin{equation*}
		\begin{split}
			\Delta_*^X\cdot u &= \Delta_*\cdot u,\quad u\in T_xS\subset T_xM, \\
			\Delta_*^X\cdot X(x) &= X(\Delta(x)).
		\end{split}
	\end{equation*}
\end{definition}

\begin{remark}
	In order for $\Delta_*^X$ to be defined at a point $x\in S$, $X(x)\not\in T_xS$. Moreover, for $\Delta_*^X$ to be invertible, $X(\Delta(x))\not\in T_{\Delta(x)}\Delta(S)$ (in addition to $\Delta_*:T_xS\to T_{\Delta(x)}\Delta(S)$ being invertible).
\end{remark}
\begin{theorem}\label{th:hybrid_inv_forms}
	Let $\mathcal{H} = (M,S,X,\Delta)$ be a smooth hybrid system with hybrid flow $\varphi_t^\mathcal{H}$. For a given $\alpha\in\Omega^m(M)$, we have $\left(\varphi_t^\mathcal{H}\right)^*\alpha = \alpha$ if and only if $\mathcal{L}_X\alpha = 0$ and
	\begin{equation*}
		\alpha_{\Delta(x)}\left( \Delta_*^X\cdot v_1,\ldots, \Delta_*^X\cdot v_m \right) = \alpha_x\left(v_1,\ldots,v_m\right).
	\end{equation*}
\end{theorem}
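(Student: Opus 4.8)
The plan is to reduce the statement to two independent facts: that a continuous-flow segment preserves $\alpha$ exactly when $\mathcal{L}_X\alpha=0$, and that the linearization of the hybrid flow \emph{across a single impact} is precisely the augmented differential $\Delta_*^X$. Since a smooth HDS satisfies (A.1)--(A.3), impacts are isolated in time and the trajectory enters and immediately leaves $S$ transversally, so any interval $[0,t]$ contains only finitely many impacts. By the semigroup property of $\varphi_t^\mathcal{H}$, the pullback $\left(\varphi_t^\mathcal{H}\right)^*\alpha$ then factors as an alternating composition of continuous-flow pullbacks and single-impact transitions, and it suffices to analyze each piece and compose.

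For the continuous pieces I would invoke the defining property of the Lie derivative, $\frac{d}{dt}\varphi_t^*\alpha = \varphi_t^*\mathcal{L}_X\alpha$: on any open set where $\varphi^\mathcal{H}$ agrees with the continuous flow $\varphi_t$, preservation of $\alpha$ is equivalent to $\mathcal{L}_X\alpha=0$. Because $S$ has codimension $1$, the set $M\setminus S$ is dense, and every point off $S$ admits a short time horizon on which $\varphi^\mathcal{H}=\varphi$; as $\mathcal{L}_X\alpha$ is continuous, its vanishing on this dense set forces $\mathcal{L}_X\alpha=0$ on all of $M$. This settles the first condition in both directions.

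The crux is a saltation lemma. Fix $x_0$ whose trajectory first meets $S$ at time $\tau_0$, set $x^-=\varphi_{\tau_0}(x_0)$ and $x^+=\Delta(x^-)$. By (A.2), $T_{x^-}M=T_{x^-}S\oplus X(x^-)\mathbb{R}$, so $dh\!\left(X(x^-)\right)\neq 0$ and the first-hitting time $\tau(\cdot)$ is smooth near $x_0$ by the implicit function theorem; quasi-smooth dependence then makes $\varphi_t^\mathcal{H}$ differentiable near $x_0$ for $t$ slightly past $\tau_0$. Writing $\varphi_t^\mathcal{H}(x)=\varphi_{t-\tau(x)}\big(\Delta(\varphi_{\tau(x)}(x))\big)$ and differentiating at $x_0$, the key is that the hitting point $\varphi_{\tau(x)}(x)$ stays in $S$ (so its derivative lands in $T_{x^-}S$) while the generator relation $D\varphi_s\,X = X\circ\varphi_s$ controls the two appearances of $D\tau$. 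Tracking these contributions, I expect the $\tau$-dependence to produce exactly the correction sending $X(x^-)\mapsto X(x^+)$, with vectors tangent to $S$ passing through via $\Delta_*$ — this is the definition of $\Delta_*^X$ — yielding the factorization $D\varphi_t^\mathcal{H}\big|_{x_0}=D\varphi_{t-\tau_0}\circ\Delta_*^X\circ D\varphi_{\tau_0}$. I anticipate this derivative computation, with its careful bookkeeping of the varying impact time, to be the main obstacle, since it is where transversality and the no-beating/immediate-departure axioms are genuinely used.

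Granting the lemma, the theorem follows quickly. Once $\mathcal{L}_X\alpha=0$, the continuous factors $D\varphi_{t-\tau_0}$ and $D\varphi_{\tau_0}$ preserve $\alpha$, so $\left(\varphi_t^\mathcal{H}\right)^*\alpha=\alpha$ across the impact is equivalent to the transition $\Delta_*^X$ preserving $\alpha$, i.e. $\alpha_{x^+}\!\left(\Delta_*^X v_1,\ldots,\Delta_*^X v_m\right)=\alpha_{x^-}\!\left(v_1,\ldots,v_m\right)$, which is precisely the stated condition with $x=x^-$. Composing over the finitely many impacts in $[0,t]$ establishes both directions of the equivalence.
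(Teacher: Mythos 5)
Your proposal is correct and follows the same overall architecture as the paper's proof: both reduce the statement to (i) $\mathcal{L}_X\alpha=0$ handling the continuous arcs and (ii) the fact that the linearization of the hybrid flow across a single impact factors as $D\varphi_{t-\tau_0}\circ\Delta_*^X\circ D\varphi_{\tau_0}$. Where you differ is in how step (ii) is established. The paper never differentiates the hitting time: it splits $T_{x_0}M=T_{x_0}^SM\oplus X(x_0)\mathbb{R}$, and for $v\in T_{x_0}^SM$ picks a curve $\gamma$ with $\varphi_{t_1}(\gamma(s))\in S$ for all $s$ (so the impact time is frozen along $\gamma$ and the chain rule gives $(\varphi_{T-t_1})_*\Delta_*(\varphi_{t_1})_*v$ directly), while for $v=X(x_0)$ it invokes the semigroup property $\varphi_T^\mathcal{H}\circ\varphi_t^\mathcal{H}=\varphi_{T+t}^\mathcal{H}$ to get $X(w_0)$. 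You instead compute the full saltation map: write $\varphi_t^\mathcal{H}(x)=\varphi_{t-\tau(x)}\bigl(\Delta(\varphi_{\tau(x)}(x))\bigr)$ with $\tau$ smooth by transversality and the implicit function theorem, and differentiate; one can check that the two $D\tau$ contributions, namely $\Delta_*\bigl(X(x^-)\,D\tau\cdot v\bigr)$ from the hitting point and $-X(x^+)\,D\tau\cdot v$ from the remaining flow time, combine with $\Delta_*\circ D\varphi_{\tau_0}$ to give exactly $\Delta_*^X\circ D\varphi_{\tau_0}$, so your anticipated cancellation does go through. Your route is the standard saltation-matrix derivation and makes the role of the varying impact time explicit, at the cost of a messier chain rule; the paper's route is cleaner but hides the same mechanism in the choice of test curves. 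Two small caveats: you should actually carry out the $D\tau$ bookkeeping rather than only anticipating it, since it is the entire content of the lemma; and your claim that (A.1)--(A.3) force only finitely many impacts in $[0,t]$ is too strong (these axioms make impacts isolated but do not exclude Zeno accumulation, which the paper treats separately in Section 6). The latter is harmless here because, like the paper, you only need the single-impact case and composition over the impacts that do occur before time $t$ at points where the flow is quasi-smooth.
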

\begin{proof} For simplicity of calculations, we will assume that $\alpha$ is a 1-form. 
	Let $x_0\in M$, then the condition that $\left(\varphi_T^\mathcal{H}\right)^*\alpha = \alpha$ means
	\begin{equation*}
		\alpha_{\varphi_T^\mathcal{H}(x_0)}\left( \left(\varphi_T^\mathcal{H}\right)_* v \right) = \alpha_{x_0}\left(v\right).
	\end{equation*}
	Choose $x_0$ and $T$ such that a single impact occurs along the path $\{\varphi_t^\mathcal{H}(x_0):t\in(0,t)\}$ and call this time $t_1$ and location $y_0$, i.e. $y_0 = \varphi_{t_1}(x_0)\in S$. Additionally, call $z_0:=\Delta(y_0)$ and $w_0:=\varphi_{T-t_1}(z_0)=\varphi_T^\mathcal{H}(x_0)$. Because the vector field is transverse to $S$ at $y_0$, we can split up the tangent space at $x_0$ in the following way:
	\begin{equation*}
		T_{x_0}M = T^S_{x_0}M \oplus X(x_0)\cdot\mathbb{R},\quad
		\left(\varphi_{t_1}\right)_*\left(T^S_{x_0}M\right) = T_{y_0}S.
	\end{equation*}
	To compute $\left(\varphi_T^\mathcal{H}\right)_*v$, we split into the cases where $v\in T_{x_0}^SM$ and $v\in X(x_0)\cdot\mathbb{R}$ (which can be taken as $v=X(x_0)$ by linearity). See Figure \ref{fig:impact_differential} for an illustration of this setup.
	
	Let $v\in T_{x_0}^SM$. Therefore, we can choose a curve $\gamma:(-\varepsilon,\varepsilon)\to M$ such that $\varphi_{t_1}\left(\gamma(s)\right)\in S$ for all $s\in(-\varepsilon,\varepsilon)$. Then $\varphi_T^\mathcal{H}(\gamma(s)) = \varphi_{T-t_1}\circ\Delta\circ\varphi_{t_1}(\gamma(s))$. Differentiating this provides
	\begin{equation*}
		\left(\varphi_T^\mathcal{H}\right)_*v = \left(\varphi_{T-t_1}\right)_*\cdot \Delta_* \cdot \left(\varphi_{t_1}\right)_*v.
	\end{equation*}
	Therefore, for $v\in T_{x_0}^SM$, 
	\begin{equation*}
		\alpha_{\varphi_T^\mathcal{H}(x_0)}\left(\left(\varphi_T^\mathcal{H}\right)_*v\right) = \alpha_{\varphi_T^\mathcal{H}(x_0)} \left( \left(\varphi_{T-t_1}\right)_*\cdot \Delta_* \cdot \left(\varphi_{t_1}\right)_*v \right).
	\end{equation*}
	Which, if $\mathcal{L}_X\alpha = 0$, invariance is equivalent to $\alpha_{\Delta(y_0)}\left(\Delta_*\cdot v\right) = \alpha_{y_0}(v)$ for $v\in T_{y_0}S\subset T_{y_0}M$.
	
	Let $v = X(x_0)$. To complete the proof, we need to show that $\left(\varphi_T^\mathcal{H}\right)_*X(x_0) = X(\varphi_T^\mathcal{H}(x_0))$. Let $\gamma:(-\varepsilon,\varepsilon)\to M$ be given by $\gamma(t) = \varphi_t(x_0)$ such that $\varepsilon<t_1$ (so $\gamma$ is also the hybrid flow). Then we have
	\begin{equation*}
		\begin{split}
			\left(\varphi_T^\mathcal{H}\right)_* v &= \left.\frac{d}{dt}\right|_{t=0} \varphi_T^\mathcal{H}\circ\varphi_t^\mathcal{H}(x_0) \\
			&= \left.\frac{d}{dt}\right|_{t=0} \varphi_{T+t}^\mathcal{H}(x_0) \\
			&= X(w_0),
		\end{split}
	\end{equation*}
	which completes the proof.
\end{proof}
\begin{figure}
	\centering
	\begin{tikzpicture}[scale=1.0]
		\coordinate (x0) at (0,0);
		\coordinate (y0) at (3,2);
		\draw (x0) to [out=0,in=-190] (y0);
		% Impact surface
		\draw (0,5) to [out=0,in=50] (y0) to [out=-180+50,in=-150] (4,-1);
		\node[right] at (4,-1) {$S$};
		% Image of impact surface
		\coordinate (z0) at (5,3);
		\draw (6,5) to [out=-90,in=80] (z0) to [out=-180+80,in=90] (7,-1);
		\node[right] at (7,-1) {$\tilde{S}$};
		\coordinate (w0) at (8,2);
		\draw (z0) to [out=-10,in=-160] (w0);
		% The tangents
		% S is at (y0) and slope 50 degrees
		\draw[red] (2,0.8082) -- (4,3.1918);
		% \tilde{S} is at (z0) and slope 80
		\draw[red] (4.75,1.5822) -- (5.25,4.4178);
		% T_x^SM which goes through (x0)
		\draw[red] (-0.2,1.5) -- (0.2,-1.5);
		\draw[red] (7.4,1) -- (8.6,3);
		% The flow vectors
		\draw[->,blue] (x0) -- (1,0);
		\draw[->,blue] (y0) -- (3.5,1.9118);
		\draw[->,blue] (z0) -- (5.75,2.8678);
		\draw[->,blue] (w0) -- (9,2.3640);
		% The black nodes are last
		\draw [fill] (x0) circle [radius=0.05];
		\draw [fill] (y0) circle [radius=0.05];
		\draw [fill] (z0) circle [radius=0.05];
		\draw [fill] (w0) circle [radius=0.05];
		% Labels
		% at y0
		\node [above] at (4,3.1918) {\textcolor{red}{$T_{y_0}S$}};
		\node [above left] at (y0) {$y_0$};
		\node [below] at (3.5,1.9118) {\textcolor{blue}{$X(y_0)$}};
		% at x0
		\node [above right] at (-0.2,1.5) {\textcolor{red}{$T_{x_0}^SM$}};
		\node [below left] at (x0) {$x_0$};
		\node [below] at (1,0) {\textcolor{blue}{$X(x_0)$}};
		% at z0
		\node [below left] at (5.25,4.4178) {\textcolor{red}{$T_{z_0}\tilde{S}$}};
		\node [left] at (z0) {$z_0$};
		\node [above] at (5.75,2.8678) {\textcolor{blue}{$X(z_0)$}};
		% at w0
		\node [above] at (8.6,3) {\textcolor{red}{$\left(\varphi_{T-t_1}\right)_*\left(T_{z_0}\tilde{S}\right)$}};
		\node [below right] at (w0) {$w_0$};
		\node [below right] at (9,2.3640) {\textcolor{blue}{$X(w_0)$}};
	\end{tikzpicture}
	\caption{Diagram for the proof of Theorem \ref{th:hybrid_inv_forms}. The set $\tilde{S}:=\Delta(S)$ and $T_{x_0}^SM:=\left(\varphi_{-t_1}\right)_*T_{y_0}S$.}
	\label{fig:impact_differential}
\end{figure}
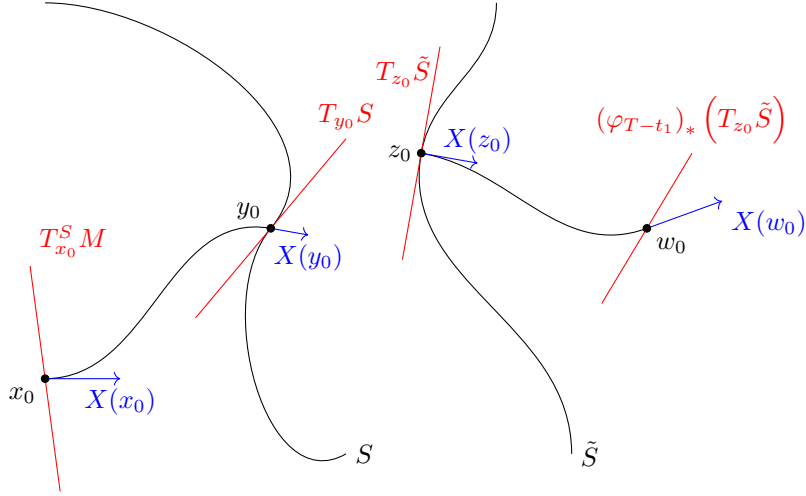
\begin{definition}
	A differential form $\alpha$ is called hybrid-invariant if $\left(\varphi_t^\mathcal{H}\right)^*\alpha=\alpha$. Let the set $\mathscr{A}_{\mathcal{H}}\subset \Omega(M)$ be all the hybrid-invariant forms.
\end{definition}
Computing the augmented differential is tedious in practice as it is a nonorthogonal projection. We can instead decompose $\Delta_*^X$ to avoid its computation. This leads to the following criteria.
\begin{theorem}\label{th:energy_specular}
	A differential form $\alpha$ is hybrid-invariant if and only if $\mathcal{L}_X\alpha = 0$ and
	\begin{gather}
		\Delta^*\iota_{\tilde{S}}^*i_X\alpha = \iota_S^*i_X\alpha, \label{eq:energy_form} \\
		\Delta^*\iota_{\tilde{S}}^*\alpha = \iota_S^*\alpha, \label{eq:specular_form}
	\end{gather}
	where $\tilde{S}=\Delta(S)$ and $\iota_S:S\hookrightarrow M$, $\iota_{\tilde{S}}:\tilde{S}\hookrightarrow M$ are the inclusion maps.
\end{theorem}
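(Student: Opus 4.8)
The plan is to start from the characterization already obtained in Theorem \ref{th:hybrid_inv_forms} and to unpack its single augmented-differential identity into the two pullback conditions by exploiting the direct-sum decomposition $T_xM = T_xS \oplus \mathbb{R}\,X(x)$, which is available because $X$ is transverse to $S$ by (A.2). Since the condition $\mathcal{L}_X\alpha = 0$ appears verbatim in both theorems, the entire content is to show that, for $\alpha \in \Omega^m(M)$ and all $v_i \in T_xM$,
\[
\alpha_{\Delta(x)}\big(\Delta_*^X v_1, \ldots, \Delta_*^X v_m\big) = \alpha_x(v_1, \ldots, v_m)
\]
is equivalent to the conjunction of \eqref{eq:energy_form} and \eqref{eq:specular_form}.

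First I would decompose each argument as $v_i = u_i + c_i X(x)$ with $u_i \in T_xS$ and $c_i \in \mathbb{R}$. Because $\alpha$ is alternating, any term carrying two or more copies of $X(x)$ vanishes, so multilinearity collapses both sides into a purely tangential term plus terms containing exactly one $X$-slot. On the source side this yields
\[
\alpha_x(v_1,\ldots,v_m) = (\iota_S^*\alpha)_x(u_1,\ldots,u_m) + \sum_{i=1}^m (-1)^{i-1} c_i\, (\iota_S^* i_X\alpha)_x(u_1,\ldots,\widehat{u_i},\ldots,u_m),
\]
where the hat denotes omission. On the target side I would use the defining properties of the augmented differential, namely $\Delta_*^X u_i = \Delta_* u_i \in T_{\Delta(x)}\tilde S$ and $\Delta_*^X X(x) = X(\Delta(x))$, to obtain the analogous expansion with $\alpha_{\Delta(x)}$ evaluated on the $\Delta_* u_i$ and on $X(\Delta(x))$: the purely tangential term is exactly $(\Delta^*\iota_{\tilde S}^*\alpha)_x(u_1,\ldots,u_m)$, and each single-$X$ term is $(-1)^{i-1}c_i\,(\Delta^*\iota_{\tilde S}^* i_X\alpha)_x(u_1,\ldots,\widehat{u_i},\ldots,u_m)$.

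Matching the two expansions then proceeds by separating variables in the $c_i$. Setting every $c_i = 0$ forces the purely tangential terms to agree for all $u_i \in T_xS$, which is precisely \eqref{eq:specular_form}; subtracting this off and then varying a single $c_i$ isolates the contracted terms and gives \eqref{eq:energy_form}. Conversely, if both \eqref{eq:energy_form} and \eqref{eq:specular_form} hold, the two expansions coincide term-by-term for every choice of $u_i$ and $c_i$, recovering the augmented-differential identity and hence hybrid invariance via Theorem \ref{th:hybrid_inv_forms}.

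The step I expect to require the most care is the bookkeeping of the alternating expansion, in particular verifying that the signs $(-1)^{i-1}$ and the omitted-slot contractions match identically on both sides; this is where the fact that $\Delta_*$ restricts to an isomorphism $T_xS \to T_{\Delta(x)}\tilde S$ (so that $\Delta^*\iota_{\tilde S}^*$ is well defined and the pullback lands in the correct space) is essential. The argument is stated above for general $m$, but as in the proof of Theorem \ref{th:hybrid_inv_forms} one may first carry it out for $1$-forms, where the expansion reduces to the two terms $\alpha_x(u_1) + c_1\,\alpha_x(X(x))$, and then invoke multilinearity to handle arbitrary degree.
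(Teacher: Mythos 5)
Your proposal is correct and follows essentially the same route as the paper's proof: decompose each tangent vector at a point of $S$ into its $T_xS$-component and its $X(x)$-component, expand by multilinearity so that only the purely tangential term and the single-$X$ terms survive, and identify these with \eqref{eq:specular_form} and \eqref{eq:energy_form} respectively. The only difference is that you carry out the alternating-expansion bookkeeping for general degree $m$, whereas the paper writes out the case $m=2$ and remarks that other degrees are identical.
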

\begin{proof}
	Suppose that $\alpha\in\Omega^2(M)$ (the proof is almost identical for forms of different degrees). For $y_0\in S$ and $u,v\in T_{y_0}M$, decompose the vectors in the following way:
	\begin{equation*}
		\begin{split}
			u &= a\cdot X(y_0) + \tilde{u}, \quad \tilde{u}\in T_{y_0}S\\
			v &= b\cdot X(y_0) + \tilde{v}, \quad \tilde{v}\in T_{y_0}S.
		\end{split}
	\end{equation*}
	Under this decomposition, the augmented differential is
	\begin{equation*}
		\begin{split}
			\Delta_*^X\cdot u &= a\cdot X(z_0) + \Delta_*\tilde{u} \\
			\Delta_*^X\cdot v &= b\cdot X(z_0) + \Delta_*\tilde{v}.
		\end{split}
	\end{equation*}
	Therefore, according to Theorem \ref{th:hybrid_inv_forms} invariance is equivalent to
	\begin{equation*}
		\alpha_{y_0}(a\cdot X(y_0) + \tilde{u},b\cdot X(y_0) + \tilde{v}) = 
		\alpha_{z_0}(a\cdot X(z_0) + \Delta_*\tilde{u},b\cdot X(z_0) + \Delta_*\tilde{v}).
	\end{equation*}
	Using the bi-linearity of $\alpha$ results in
	\begin{equation*}
		\begin{split}
			0 &= a\cdot\alpha(X(y_0),\tilde{v}) - a\cdot\alpha(X(z_0),\Delta_*\tilde{v}) \\
			&\quad + b\cdot\alpha(\tilde{u},X(y_0))-b\cdot\alpha(\Delta_*\tilde{u},X(z_0)) \\
			&\quad + \alpha(\tilde{u},\tilde{v}) - \alpha(\Delta_*\tilde{u},\Delta_*\tilde{v}).
		\end{split}
	\end{equation*}
	This condition is equivalent to
	\begin{equation*}
		\begin{split}
			i_{X(y_0)}\alpha(\tilde{v}) &= i_{X(z_0)}\alpha(\Delta_*\tilde{v}) \\
			\alpha(\tilde{u},\tilde{v}) &= \alpha(\Delta_*\tilde{u},\Delta_*\tilde{v})
		\end{split}
	\end{equation*}
	for all $\tilde{u},\tilde{v}\in T_{y_0}S$. These are equivalent to \eqref{eq:energy_form} and \eqref{eq:specular_form}.
\end{proof}
It is interesting to point out that hybrid-invariance requires two additional conditions, not one. For reasons that will be apparent in \S\ref{sec:vol_mech}, condition \eqref{eq:energy_form} will be called the \textit{energy condition} while \eqref{eq:specular_form} will be called the \textit{specular condition}. 

A benefit of using the specular and energy conditions is that we can describe some algebraic properties of the space $\mathscr{A}_\mathcal{H}$.
\begin{corollary}\label{cor:inv_algebra}
	The set of hybrid-invariant forms $\mathscr{A}_\mathcal{H}\subset\Omega(M)$ is a $\wedge$-subalgebra closed under $d$ and $i_X$.
\end{corollary}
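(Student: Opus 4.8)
The plan is to check that each of $\wedge$, $d$, and $i_X$ preserves the two defining conditions of hybrid-invariance, invoking whichever characterization—Theorem~\ref{th:hybrid_inv_forms} or Theorem~\ref{th:energy_specular}—is most convenient. Throughout, write $(\Delta_*^X)^*$ for the pullback of a pointwise multilinear form along the linear isomorphism $\Delta_*^X\colon T_xM\to T_{\Delta(x)}M$, so that the condition of Theorem~\ref{th:hybrid_inv_forms} reads $(\Delta_*^X)^*\alpha_{\Delta(x)}=\alpha_x$ for every $x\in S$. First, $\mathscr{A}_\mathcal{H}$ is a vector subspace, since both $\mathcal{L}_X\alpha=0$ and this condition are linear in $\alpha$. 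For closure under $\wedge$, given hybrid-invariant $\alpha,\beta$, the Leibniz rule gives $\mathcal{L}_X(\alpha\wedge\beta)=(\mathcal{L}_X\alpha)\wedge\beta+\alpha\wedge(\mathcal{L}_X\beta)=0$, while $(\Delta_*^X)^*$, being the pullback by a single linear map, is an algebra homomorphism; hence $(\Delta_*^X)^*(\alpha\wedge\beta)_{\Delta(x)}=\alpha_x\wedge\beta_x=(\alpha\wedge\beta)_x$, and $\alpha\wedge\beta\in\mathscr{A}_\mathcal{H}$.

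The step I expect to be the most telling is closure under $i_X$, and here I would use Theorem~\ref{th:hybrid_inv_forms} directly. The commutator identity $[\mathcal{L}_X,i_X]=i_{[X,X]}=0$ yields $\mathcal{L}_X(i_X\alpha)=i_X(\mathcal{L}_X\alpha)=0$. The augmented condition then rests entirely on the defining property $\Delta_*^X X(x)=X(\Delta(x))$ of the augmented differential: for $v_1,\dots,v_{m-1}\in T_xM$,
\begin{align*}
(\Delta_*^X)^*(i_X\alpha)_{\Delta(x)}(v_1,\dots,v_{m-1}) &= \alpha_{\Delta(x)}\big(X(\Delta(x)),\Delta_*^X v_1,\dots,\Delta_*^X v_{m-1}\big) \\
&= \alpha_{\Delta(x)}\big(\Delta_*^X X(x),\Delta_*^X v_1,\dots,\Delta_*^X v_{m-1}\big) \\
&= \big((\Delta_*^X)^*\alpha_{\Delta(x)}\big)(X(x),v_1,\dots,v_{m-1}) = (i_X\alpha)_x(v_1,\dots,v_{m-1}),
\end{align*}
the final equality using hybrid-invariance of $\alpha$. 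Thus $i_X\alpha\in\mathscr{A}_\mathcal{H}$.

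For closure under $d$ I would switch to Theorem~\ref{th:energy_specular}, because its conditions are expressed through the honest pullbacks $\Delta^*$, $\iota_S^*$, $\iota_{\tilde S}^*$, each of which commutes with $d$. The specular condition for $d\alpha$ is then immediate: $\Delta^*\iota_{\tilde S}^* d\alpha=\Delta^* d\,\iota_{\tilde S}^*\alpha=d\,\Delta^*\iota_{\tilde S}^*\alpha=d\,\iota_S^*\alpha=\iota_S^* d\alpha$, using the specular condition for $\alpha$. For the energy condition, Cartan's magic formula with $\mathcal{L}_X\alpha=0$ gives $i_X d\alpha=-d(i_X\alpha)$; since $i_X\alpha\in\mathscr{A}_\mathcal{H}$ by the previous step, its specular condition $\Delta^*\iota_{\tilde S}^*(i_X\alpha)=\iota_S^*(i_X\alpha)$ holds, and applying $d$ and commuting past the pullbacks turns this into $\Delta^*\iota_{\tilde S}^* i_X d\alpha=\iota_S^* i_X d\alpha$. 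Together with $\mathcal{L}_X d\alpha=d\,\mathcal{L}_X\alpha=0$, this gives $d\alpha\in\mathscr{A}_\mathcal{H}$.

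The main obstacle is exactly the interface between $d$ and the augmented differential: since $\Delta_*^X$ is assembled by hand from $\Delta_*$ on $T_xS$ together with the prescription $X(x)\mapsto X(\Delta(x))$, it is not the tangent map of any smooth map, and so one may not naively assert that $(\Delta_*^X)^*$ commutes with $d$. The way around this is to phrase $d$-closure through the submanifold pullbacks of Theorem~\ref{th:energy_specular}, which are genuine and hence do commute with $d$, letting Cartan's formula mediate between $i_X$ and $d$.
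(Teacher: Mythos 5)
Your proof is correct, and the hardest step --- closure under $d$ --- is handled exactly as in the paper: the specular condition for $d\alpha$ follows because the genuine pullbacks $\Delta^*$, $\iota_S^*$, $\iota_{\tilde S}^*$ commute with $d$, and the energy condition follows from Cartan's formula $i_Xd\alpha=\mathcal{L}_X\alpha-di_X\alpha=-di_X\alpha$ together with the (already established) invariance of $i_X\alpha$. Where you diverge is in the treatment of $\wedge$ and $i_X$: the paper stays entirely within the energy/specular framework of Theorem~\ref{th:energy_specular}, observing that $i_X$ \emph{swaps} the two conditions (the specular condition for $i_X\alpha$ is literally the energy condition for $\alpha$, and the energy condition for $i_X\alpha$ is trivial since $i_Xi_X\alpha=0$), and handling $\wedge$ by distributing the pullbacks over the wedge product together with the antiderivation property of $i_X$. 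You instead work pointwise with the augmented differential of Theorem~\ref{th:hybrid_inv_forms}, using that $(\Delta_*^X)^*$ is an algebra homomorphism on each exterior algebra and that $\Delta_*^XX(x)=X(\Delta(x))$ by construction. Both routes are short and correct; the paper's buys a cleaner uniformity (one characterization throughout) and makes the energy/specular duality under $i_X$ visible, while yours isolates precisely which structural features of $\Delta_*^X$ are responsible for each closure property. Your closing remark --- that $\Delta_*^X$ is not the tangent map of any smooth map, so $(\Delta_*^X)^*$ cannot be assumed to commute with $d$, forcing the switch to the submanifold pullbacks for the $d$ case --- is a genuine point that the paper leaves implicit, and it explains why the paper's proof is phrased the way it is.
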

\begin{proof}
	If we denote $\mathscr{A}:=\left\{\alpha\in\Omega(M):\mathcal{L}_X\alpha=0\right\}$, then it is already known that $\mathscr{A}\subset\Omega(M)$ is a $\wedge$-subalgebra closed under $d$ and $i_X$ (see Corollary 3.4.5 in \cite{abraham2008foundations}). Therefore, in order to prove the theorem, it suffices only to check \eqref{eq:energy_form} and \eqref{eq:specular_form}. Let $\alpha,\beta\in\mathscr{A}_\mathcal{H}$. We only need to check that $d\alpha$, $i_X\alpha$, and $\alpha\wedge\beta$ obey \eqref{eq:energy_form} and \eqref{eq:specular_form}.
	
	Consider $i_X\alpha$. This satisfies \eqref{eq:specular_form} because $\alpha$ satisfies \eqref{eq:energy_form} and \eqref{eq:energy_form} is satisfied because $i_Xi_X\alpha=0$.
	
	Consider $d\alpha$. Condition \eqref{eq:specular_form} follows from the fact that $d$ commutes with pullbacks:
	\begin{equation*}
		\Delta^*\iota_{\tilde{S}}^*d\alpha = d\left( \Delta^*\iota_{\tilde{S}}^*\alpha\right) 
		= d\left(\iota_S^*\alpha\right) 
		= \iota_S^*d\alpha.
	\end{equation*}
	Condition \eqref{eq:energy_form} requires Cartan's magic formula ($\mathcal{L}_X = di_X + i_Xd$):
	\begin{equation*}
		\begin{split}
			\Delta^*\iota_{\tilde{S}}^*i_Xd\alpha &= \Delta^*\iota_{\tilde{S}}^*\left( \cancel{\mathcal{L}_X\alpha} - di_X\alpha\right)= -\Delta^*\iota_{\tilde{S}}^*di_X\alpha  \\
			&= -d\left(\Delta^*\iota_{\tilde{S}}^*i_X\alpha\right) = 
			-d\left(\iota_S^*i_X\alpha\right) \\
			&= -\iota_S^*di_X\alpha = \iota_S^*i_Xd\alpha.
		\end{split}
	\end{equation*}
	
	Finally, consider $\alpha\wedge\beta$. The condition \eqref{eq:specular_form} holds because pullbacks distribute over the wedge product: $f^*(\alpha\wedge\beta) = f^*\alpha\wedge f^*\beta$. For condition \eqref{eq:energy_form}, we see that
	\begin{equation*}
		\begin{split}
			\Delta^*\iota_{\tilde{S}}^*i_X\left(\alpha\wedge\beta\right) &= \Delta^*\iota_{\tilde{S}}^*\left(i_X\alpha\wedge\beta - \alpha\wedge i_X\beta\right) \\
			&= \left(\Delta^*\iota_{\tilde{S}}^*i_X\alpha\right)\wedge \left(\Delta^*\iota_{\tilde{S}}^*\beta\right) - \left(\Delta^*\iota_{\tilde{S}}^*\alpha\right)\wedge\left(\Delta^*\iota_{\tilde{S}}^*i_X\beta\right) \\
			&= \left(\iota_S^*i_X\alpha\right)\wedge\left(\iota_S^*\beta\right) - \left(\iota_S^*\alpha\right)\wedge\left(\iota_S^*i_X\beta\right) \\
			&= \iota_S^*i_X\left(\alpha\wedge\beta\right),
		\end{split}
	\end{equation*}
	which completes the proof.
\end{proof}
\begin{remark}
	We point out how the conditions for invariant forms manifest for functions, i.e. 0-forms. The energy condition becomes trivial as $i_Xf = 0$ while the specular condition reads that $f\circ\Delta = f$. This is in agreement with the common notion that a function is hybrid-invariant if it is preserved across both the smooth flow ($\mathcal{L}_Xf=0$) and across impacts ($f\circ\Delta = f$). On the other hand, if $\mu\in\Omega^n(M)$ is a volume form, then the specular condition becomes trivial and we are only interested in the energy condition. We investigate this below.
\end{remark}
%%%%%%%%%%%%%%%%%%%%%%%%%%%%%%%%%%%%%%%%%%%%%%%%%%%%%%%%%%%%%%%%%%%%%%%%
\subsection{Hybrid-invariant volumes}
Suppose that $\dim M = n$, then a volume form is given by a non-vanishing form $\mu\in\Omega^n(M)$. This volume form is hybrid-invariant if $\mu\in \mathscr{A}_\mathcal{H}$, i.e. we wish to determine whether or not $\mathscr{A}_\mathcal{H}\cap\Omega^n(M)$ is empty. To test for hybrid-invariant volume forms, we have the following refinement of Theorem \ref{th:energy_specular}.
\begin{theorem}
	Let $\mathcal{H} = (M,S,X,\Delta)$ be a smooth hybrid system. A volume form $\mu\in\Omega^n(M)$ is hybrid-invariant if and only if
	\begin{equation*}
		\begin{cases}
			\mathrm{div}_\mu(X) = 0, \\
			\Delta^*\iota_{\tilde{S}}^*i_X\mu = \iota_S^*i_X\mu.
		\end{cases}
	\end{equation*}
\end{theorem}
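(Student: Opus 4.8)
The plan is to obtain this statement directly from Theorem \ref{th:energy_specular} by specializing to the top degree $m = n$. That theorem characterizes hybrid-invariance of $\mu$ by three requirements: $\mathcal{L}_X\mu = 0$, the energy condition \eqref{eq:energy_form}, and the specular condition \eqref{eq:specular_form}. So it suffices to show that in top degree the first requirement is equivalent to $\mathrm{div}_\mu(X) = 0$, that the energy condition is precisely the second displayed equation, and that the specular condition is automatically satisfied and may therefore be discarded.

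First I would handle the smooth-flow requirement. Since $\mu$ is a volume form, that is, a nowhere-vanishing $n$-form, the divergence $\mathrm{div}_\mu(X)$ is by definition the unique smooth function satisfying $\mathcal{L}_X\mu = \mathrm{div}_\mu(X)\cdot\mu$. Because $\mu$ is nowhere zero, $\mathcal{L}_X\mu = 0$ holds if and only if $\mathrm{div}_\mu(X) = 0$, which replaces the Lie-derivative condition by the stated divergence-free condition.

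Next I would show that the specular condition \eqref{eq:specular_form} is vacuous. Both $S$ and $\tilde{S} = \Delta(S)$ are embedded submanifolds of codimension one, hence of dimension $n-1$. The pullback of an $n$-form along the inclusion of an $(n-1)$-dimensional manifold necessarily vanishes, so $\iota_S^*\mu = 0$ and $\iota_{\tilde{S}}^*\mu = 0$, whence $\Delta^*\iota_{\tilde{S}}^*\mu = 0$ as well. Thus \eqref{eq:specular_form} collapses to the trivial identity $0 = 0$ and imposes no constraint. By contrast, the energy condition \eqref{eq:energy_form} reads $\Delta^*\iota_{\tilde{S}}^*i_X\mu = \iota_S^*i_X\mu$, which is exactly the second displayed equation; here $i_X\mu$ is an $(n-1)$-form whose restriction to the $(n-1)$-dimensional surfaces need not vanish, so this remains a genuine condition.

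Combining these three observations with Theorem \ref{th:energy_specular} yields the claim. The only point that needs care is the non-vanishing of $\mu$, which is precisely what guarantees the equivalence between $\mathcal{L}_X\mu = 0$ and $\mathrm{div}_\mu(X) = 0$; everything else is a dimension count. I do not expect any substantive obstacle here, since the result is essentially the remark preceding the statement made precise: in top degree the specular condition degenerates while the energy condition survives.
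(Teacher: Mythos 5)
Your proposal is correct and follows essentially the same route as the paper: specialize Theorem \ref{th:energy_specular} to top degree, observe that the specular condition \eqref{eq:specular_form} is vacuous because the pullback of an $n$-form to the $(n-1)$-dimensional surfaces $S$ and $\tilde{S}$ vanishes, and retain only the energy condition. Your additional remark that $\mathcal{L}_X\mu = 0$ is equivalent to $\mathrm{div}_\mu(X) = 0$ via the non-vanishing of $\mu$ is a detail the paper leaves implicit, but it is the standard definition and changes nothing.
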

\begin{proof}
	These conditions match those of Theorem \ref{th:energy_specular} with the exception of the specular term, \eqref{eq:specular_form}. This is because $\mu$ is an $n$-form and $\dim S = n-1$ and therefore $\iota_S^*\mu\equiv 0$ so the specular term is trivially satisfied.
\end{proof}
Recall that the dimension of $\Omega^n(M)$ over $C^\infty(M)$ is one which means that for a given volume-form $\mu\in \Omega^n(M)$, then any other form $\nu\in\Omega^n(M)$ can be written as $\nu = f\mu$ for some $f\in C^\infty(M)$. This fact can be useful in finding hybrid-invariant volumes in the following way: suppose that $\mu\in\Omega^n(M)$ but $\mu\not\in\mathscr{A}_\mathcal{H}$. What conditions can be placed on a function $f\in C^\infty(M)$ to guarantee that $f\mu\in\mathscr{A}_\mathcal{H}$?
\begin{definition}
	Let $\mathcal{H} = (M,S,X,\Delta)$ be a smooth hybrid system and let $\mu\in\Omega^n(M)$ be a volume form. The unique function $\mathcal{J}_\mu(\Delta)\in C^\infty(S)$ such that
	\begin{equation}\label{eq:hybrid_jacobian}
		\Delta^*\iota_{\tilde{S}}^*i_X\mu = \mathcal{J}_\mu(\Delta)\cdot \iota_S^* i_X\mu,
	\end{equation}
	is called the \textit{hybrid Jacobian} of $\Delta$ (with respect to $\mu$).
\end{definition}
The hybrid Jacobian allows for conditions on $f$ to be described via a ``hybrid cohomology equation.''
\begin{proposition}\label{prop:hybrid_coho}
	For a smooth hybrid system $\mathcal{H} = (M,S,X,\Delta)$, there exists a smooth hybrid-invariant volume, $f\mu$, if there exists a smooth function $g\in C^\infty(M)$ such that
	\begin{equation}\label{eq:hybrid_cohom}
		\begin{split}
			dg(X) &= -\mathrm{div}_\mu(X) \\
			g\circ\Delta - g|_S &= -\ln\left(\mathcal{J}_\mu(\Delta)\right).
		\end{split}
	\end{equation}
	Then the density is (up to a multiplicative constant) $f = \exp(g)$.
\end{proposition}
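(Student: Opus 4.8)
The plan is to verify directly that $f\mu = \exp(g)\,\mu$ satisfies the two criteria of the theorem characterizing hybrid-invariant volumes — that $\mathrm{div}_{f\mu}(X) = 0$ and $\Delta^*\iota_{\tilde{S}}^* i_X(f\mu) = \iota_S^* i_X(f\mu)$ — and to observe that, after substituting $f = \exp(g)$, each of these two criteria collapses exactly onto one of the two equations of the hybrid cohomology system \eqref{eq:hybrid_cohom}. Since the statement asserts only a sufficient condition, a single (forward) direction suffices.

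First I would dispose of the divergence condition. Combining the Leibniz rule for $\mathcal{L}_X$ with $\mathcal{L}_X\exp(g) = \exp(g)\,dg(X)$ and $\mathcal{L}_X\mu = \mathrm{div}_\mu(X)\,\mu$ gives
\begin{equation*}
\mathcal{L}_X(f\mu) = \exp(g)\bigl(dg(X) + \mathrm{div}_\mu(X)\bigr)\mu,
\end{equation*}
so that $\mathrm{div}_{f\mu}(X) = dg(X) + \mathrm{div}_\mu(X)$, and the first equation of \eqref{eq:hybrid_cohom} is precisely the requirement that this vanish.

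Next I would treat the energy/Jacobian condition, which is the substantive computation. Because $f$ is a $0$-form, $i_X(f\mu) = f\,i_X\mu$, and pullbacks commute with multiplication by functions, so
\begin{equation*}
\iota_S^* i_X(f\mu) = (f\circ\iota_S)\,\iota_S^* i_X\mu, \qquad \Delta^*\iota_{\tilde{S}}^* i_X(f\mu) = (f\circ\Delta)\,\Delta^*\iota_{\tilde{S}}^* i_X\mu.
\end{equation*}
Inserting the definition \eqref{eq:hybrid_jacobian} of the hybrid Jacobian reduces the desired equality to
\begin{equation*}
(f\circ\Delta)\,\mathcal{J}_\mu(\Delta)\,\iota_S^* i_X\mu = (f|_S)\,\iota_S^* i_X\mu.
\end{equation*}
Since (A.2) makes $X$ transverse to $S$, the form $\iota_S^* i_X\mu$ is a nowhere-vanishing top-form on the $(n-1)$-dimensional $S$ and may be cancelled, leaving $(f\circ\Delta)\,\mathcal{J}_\mu(\Delta) = f|_S$; writing $f = \exp(g)$ and taking logarithms recovers exactly the second equation of \eqref{eq:hybrid_cohom}.

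The one point demanding care — and the main obstacle — is the well-definedness of $\ln\mathcal{J}_\mu(\Delta)$, which presupposes $\mathcal{J}_\mu(\Delta) > 0$. I would establish this by noting that both $\iota_S^* i_X\mu$ and $\Delta^*\iota_{\tilde{S}}^* i_X\mu$ are nonvanishing top-forms on $S$ whose ratio compares the co-orientations induced by the transverse field $X$; since the augmented differential $\Delta_*^X$ sends $X(y_0)$ to $X(z_0)$ and carries $T_{y_0}S$ isomorphically onto $T_{z_0}\tilde{S}$, it preserves this co-orientation, forcing $\mathcal{J}_\mu(\Delta) > 0$. This both legitimizes passing to $\ln\mathcal{J}_\mu(\Delta)$ and guarantees that $f = \exp(g)$ is strictly positive, hence a genuine volume density. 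With the two equations of \eqref{eq:hybrid_cohom} thereby matched to the two invariance criteria, the preceding theorem yields $f\mu \in \mathscr{A}_\mathcal{H}$.
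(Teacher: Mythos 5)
Your proof is correct and is exactly the argument the paper leaves implicit: the proposition is stated there without proof, and the intended justification is precisely your direct substitution of $f=\exp(g)$ into the two criteria of the preceding volume-form theorem together with the definition \eqref{eq:hybrid_jacobian} of the hybrid Jacobian, each criterion collapsing onto one line of \eqref{eq:hybrid_cohom}. The only superfluous step is the co-orientation argument for $\mathcal{J}_\mu(\Delta)>0$ --- which is not fully rigorous for an arbitrary $\Delta$, since $X$ need not be transverse to $\Delta(S)$ and $\Delta_*$ need not preserve orientation --- but positivity is already presupposed by the hypothesis that $\ln\left(\mathcal{J}_\mu(\Delta)\right)$ is defined, so nothing is lost.
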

%%%%%%%%%%%%%%%%%%%%%%%%%%%%%%%%%%%%%%%%%%%%%%%%%%%%%%%%%%%%%%%%%%%%%%%%
%%%%%%%%%%%%%%%%%%%%%%%%%%%%%%%%%%%%%%%%%%%%%%%%%%%%%%%%%%%%%%%%%%%%%%%%
%%%%%%%%%%%%%%%%%%%%%%%%%%%%%%%%%%%%%%%%%%%%%%%%%%%%%%%%%%%%%%%%%%%%%%%%
%%%%%%%%%%%%%%%%%%%%%%%%%%%%%%%%%%%%%%%%%%%%%%%%%%%%%%%%%%%%%%%%%%%%%%%%
\section{Invariant volumes in mechanical hybrid systems}\label{sec:vol_mech}
It turns out that unconstrained impact systems remain volume preserving while the problem in nonholonomic systems is much more difficult to answer. It is already known that unconstrained impact systems are symplectic (and hence volume-preserving), \cite{impactSymp}. We prove this below using Theorem \ref{th:energy_specular}.
\begin{proposition}\label{prop:unconst_hamilt_volume}
	Let $\mathcal{H} = (T^*Q,S^*,X_H,\Delta)$ be an unconstrained Hamiltonian impact system. Then $\omega^n\in \mathscr{A}_\mathcal{H}$.
\end{proposition}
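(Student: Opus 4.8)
The plan is to first prove that the symplectic form $\omega$ itself lies in $\mathscr{A}_\mathcal{H}$, and then to invoke Corollary \ref{cor:inv_algebra}: since $\mathscr{A}_\mathcal{H}$ is a $\wedge$-subalgebra, membership of $\omega$ immediately yields $\omega^n=\omega\wedge\cdots\wedge\omega\in\mathscr{A}_\mathcal{H}$. To show $\omega\in\mathscr{A}_\mathcal{H}$ I would apply Theorem \ref{th:energy_specular} to $\alpha=\omega$ with the general impact surface taken to be $S^*$ and $\tilde S=\Delta(S^*)$, verifying all three requirements: the Lie-derivative condition $\mathcal{L}_{X_H}\omega=0$, the energy condition \eqref{eq:energy_form}, and the specular condition \eqref{eq:specular_form}. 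Since $\omega$ has degree $2$, none of the top-degree simplifications apply and all three conditions genuinely have to be checked.

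The first two are essentially formal. The identity $\mathcal{L}_{X_H}\omega=0$ is Liouville's theorem: because $X_H$ is Hamiltonian, $\mathcal{L}_{X_H}\omega=d\,i_{X_H}\omega+i_{X_H}d\omega=d(dH)=0$. For the energy condition I would use $i_{X_H}\omega=dH$ to rewrite \eqref{eq:energy_form} as $\Delta^*\iota_{\tilde S}^*dH=\iota_{S^*}^*dH$. As pullback commutes with $d$, the left side is $d\big(H\circ\iota_{\tilde S}\circ\Delta\big)=d\big((H\circ\Delta)|_{S^*}\big)$, while the Hamiltonian corner condition $H^+=H^-$ in \eqref{eq:hamiltonian_impact} is exactly $H\circ\Delta=H|_{S^*}$; differentiating this gives $d(H|_{S^*})=\iota_{S^*}^*dH$, which is the right side. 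Hence \eqref{eq:energy_form} holds.

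The crux is the specular condition \eqref{eq:specular_form}, $\Delta^*\iota_{\tilde S}^*\omega=\iota_{S^*}^*\omega$, and this is where the geometry of mechanical impacts enters. By Assumption \ref{ass:impact} and \eqref{eq:hamiltonian_impact} the impact is the identity on the base with the momentum jump directed along $\pi_Q^*dh$, so in Darboux coordinates it extends to an ambient map $\bar\Delta(q,p)=(q,\,p+\varepsilon\,dh)$ with $\iota_{\tilde S}\circ\Delta=\bar\Delta\circ\iota_{S^*}$. A direct computation with $\omega=dq^i\wedge dp_i$ then gives
\begin{equation*}
	\bar\Delta^*\omega=\omega+\pi_Q^*dh\wedge d\varepsilon,
\end{equation*}
the Hessian term $\varepsilon\,\partial_i\partial_j h\,dq^i\wedge dq^j$ vanishing by symmetry in $i,j$. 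It then remains to see that the correction term restricts to zero on $S^*$. Since $\pi_Q(S^*)\subset S=\{h=0\}$, the composite $h\circ\pi_Q\circ\iota_{S^*}$ vanishes identically, so $\iota_{S^*}^*\pi_Q^*dh=d\big(h\circ\pi_Q\circ\iota_{S^*}\big)=0$; note this kills the correction irrespective of the chosen extension of $\varepsilon$, since
\begin{equation*}
	\iota_{S^*}^*\big(\pi_Q^*dh\wedge d\varepsilon\big)=\big(\iota_{S^*}^*\pi_Q^*dh\big)\wedge\big(\iota_{S^*}^*d\varepsilon\big)=0.
\end{equation*}
Therefore $\Delta^*\iota_{\tilde S}^*\omega=\iota_{S^*}^*\bar\Delta^*\omega=\iota_{S^*}^*\omega$, giving \eqref{eq:specular_form}. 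With all three conditions verified, $\omega\in\mathscr{A}_\mathcal{H}$, and Corollary \ref{cor:inv_algebra} yields $\omega^n\in\mathscr{A}_\mathcal{H}$.

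I expect the specular condition to be the only real obstacle: the $\mathcal{L}_{X_H}\omega=0$ and energy conditions are immediate from the Hamiltonian structure and energy conservation, whereas \eqref{eq:specular_form} requires both the explicit shape of the impact map (jump along $\pi_Q^*dh$, identity on the base) and the observation that the resulting correction $\pi_Q^*dh\wedge d\varepsilon$ is annihilated upon restriction to the impact surface because $S^*$ sits over $\{h=0\}$. A cleaner variant, avoiding the algebra step, would instead feed $\mu=\omega^n$ into the top-form refinement of Theorem \ref{th:energy_specular}, where the specular term is automatically trivial and only $\mathrm{div}_{\omega^n}(X_H)=0$ and the energy condition on $i_{X_H}\omega^n=n\,dH\wedge\omega^{n-1}$ remain; but that route reintroduces the same coordinate computation, so routing through $\omega$ and Corollary \ref{cor:inv_algebra} is the most economical.
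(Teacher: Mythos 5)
Your proposal is correct and follows essentially the same route as the paper: verify $\mathcal{L}_{X_H}\omega=0$, the energy condition via $i_{X_H}\omega=dH$ and conservation of energy, and the specular condition for $\omega$ itself, then invoke Corollary \ref{cor:inv_algebra} to pass to $\omega^n$. The only cosmetic difference is in the specular step, where the paper works in coordinates adapted to $h$ (so that only $p_n$ jumps and $dp_n$ is invisible in $\iota_{\tilde S}^*\omega$), whereas you compute $\bar\Delta^*\omega=\omega+\pi_Q^*dh\wedge d\varepsilon$ in general Darboux coordinates and observe that the correction dies on restriction to $S^*$ --- the same computation in different clothing.
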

\begin{proof}
	First off, $\mathcal{L}_{X_H}\omega = 0$ by Liouville's theorem. To show that $\omega$ is hybrid-invariant, we need to show that it satisfies both the energy and specular conditions. The energy conditions follows from conservation of energy:
	\begin{equation*}
		\Delta^*\iota_{\tilde{S}}^*i_{X_H}\omega = \Delta^*\iota_{\tilde{S}}^* dH = \iota_S^*dH = \iota_S^*i_{X_H}\omega.
	\end{equation*}
	To show the specular condition, choose coordinates such that $x^n = h$. In these coordinates,
	\begin{equation*}
		\iota_{\tilde{S}}^*\omega = dx^1\wedge dp_1 + \ldots + dx^{n-1}\wedge dp_{n-1}. 
	\end{equation*}
	According to the first corner condition, \eqref{eq:lagrange_impact}, the impact is the identity on every coordinate with the exception of $p_n$, but this is invisible to the restricted form $\iota_{\tilde{S}}^*\omega$. Therefore, it is preserved across impacts.
	
	We have proven that $\omega\in\mathscr{A}_\mathcal{H}$ and $\omega^n\in\mathscr{A}_\mathcal{H}$ follows from Corollary \ref{cor:inv_algebra}.
\end{proof}
This proposition demonstrates the naming convention of both the energy and specular conditions: the energy condition comes from conservation of energy and the specular condition comes from the impact being specular (the two corner conditions in \eqref{eq:lagrange_impact}).
%%%%%%%%%%%%%%%%%%%%%%%%%%%%%%%%%%%%%%%%%%%%%%%%%%%%%%%%%%%%%%%%%%%%%%%%
\subsection{Invariant volumes in nonholonomic hybrid systems}
Proposition \ref{prop:unconst_hamilt_volume} shows that unconstrained hybrid mechanical systems automatically preserve volume (independent of the choice of $S$). In the language of Proposition \ref{prop:hybrid_coho}, $\mathrm{div}_{\omega^n}(X_H)=0$ and $\mathcal{J}_{\omega^n}(\Delta)=1$ which admits a trivial solution to the hybrid cohomology equation. On the contrary, it is no longer generally true that $\mathrm{div}_{\mu_\mathscr{C}}(X_H^\mathcal{D})=0$ where $\mu_\mathscr{C}$ is the nonholonomic volume form (cf. \cite{nhvolume}) and $X_H^\mathcal{D}$ is the nonholonomic vector field. Likewise, it is no longer obvious whether or not $\mathcal{J}_{\mu_\mathscr{C}}(\Delta^\mathcal{D})=1$. In what follows, we compute the hybrid Jacobian and show that it, indeed, does equal 1.
%%%%%%%%%%%%%%%%%%%%%%%%%%%%%%%%%%%%%%%%%%%%%%%%%%%%%%%%%%%%%%%%%%%%%%%%
\subsubsection{The hybrid Jacobian}
In order to find invariant volumes for nonholonomic hybrid systems, we need to be able to compute $\mathcal{J}_{\mu_\mathscr{C}}(\Delta^\mathcal{D})$. In order to calculate this, we will first compute the ``global'' version $\mathcal{J}_{\omega^n}(\Delta^\mathscr{C})$ and restrict to $\mathcal{D}^*$ (recall Remark \ref{rmk:global_impact}). The following computation will make use of the nonholonomic volume form (which was used in Theorem \ref{th:NHVol}) which is defined as below.
\begin{definition}
	Let $\mathscr{C} = \left\{\eta^1,\ldots,\eta^m\right\}$ be a collection of constraints realizing the constraint submanifold $\mathcal{D}\subset TQ$. The nonholonomic volume form, $\mu_\mathscr{C}$, is a volume form on $\mathcal{D}^*$ which is constructed as follows: Let $W^\alpha = \mathbb{F}L^{-1}\eta^\alpha$ be the corresponding vector fields and $P(W^\alpha)$ be their momentum functions. Define the $m$-form
	\begin{equation*}
		\sigma_{\mathscr{C}} := dP(W^1)\wedge \ldots \wedge dP(W^m).
	\end{equation*}
	Then the nonholonomic volume form is given by
	\begin{equation*}
		\mu_{\mathscr{C}} = \iota^*\varepsilon, \quad \sigma_{\mathscr{C}}\wedge\varepsilon = \omega^n,
	\end{equation*}
	where $\iota:\mathcal{D}^*\hookrightarrow T^*Q$ is the inclusion.
\end{definition}
It is shown in \cite{nhvolume} that the nonholonomic form, as defined above, is a unique volume form on $\mathcal{D}^*$. Computations with this volume form are difficult as it requires utilizing local coordinates. The following lemma offers a computational trick to sidestep this issue.
\begin{lemma}
	Let $\mathcal{H}^\mathscr{C}=(T^*Q,S^*,\Xi_H^\mathscr{C},\Delta^\mathscr{C})$ be the global version of the nonholonomic hybrid system $\mathcal{H}^\mathcal{D}=(\mathcal{D}^*,S^*_\mathcal{D},X_H^\mathcal{D},\Delta^\mathcal{D})$ and let $\mu_{\mathscr{C}}$ be the nonholonomic volume form. Then
	\begin{equation*}
		\mathcal{J}_{\omega^n}(\Delta^\mathscr{C})|_{\mathcal{D}^*} =  \mathcal{J}_{\mu_{\mathscr{C}}}(\Delta^\mathcal{D}).
	\end{equation*}
\end{lemma}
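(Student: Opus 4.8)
The plan is to verify the defining relation \eqref{eq:hybrid_jacobian} for the global Jacobian at an arbitrary point $p\in S^*\cap\mathcal{D}^* = S^*_\mathcal{D}$ and show that, after evaluating on an adapted basis, it collapses to the defining relation for $\mathcal{J}_{\mu_\mathscr{C}}(\Delta^\mathcal{D})$ at the same point. The whole argument rests on the factorization $\omega^n = \sigma_\mathscr{C}\wedge\varepsilon$ together with the observation that $\Xi_H^\mathscr{C}$ annihilates every constraint momentum. Writing $P^\gamma := P(W^\gamma)$, a short computation with the formula of Remark \ref{rmk:GNVF}, using $\eta^\beta(W^\gamma) = m^{\beta\gamma}$ and $m_{\alpha\beta}m^{\beta\gamma} = \delta^\gamma_\alpha$, shows $\Xi_H^\mathscr{C}(P^\gamma) = 0$; hence $i_{\Xi_H^\mathscr{C}}\sigma_\mathscr{C} = 0$ and the Leibniz rule yields the clean identity $i_{\Xi_H^\mathscr{C}}\omega^n = (-1)^m\,\sigma_\mathscr{C}\wedge i_{\Xi_H^\mathscr{C}}\varepsilon$. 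Pulling back along $\iota:\mathcal{D}^*\hookrightarrow T^*Q$ and using $\mu_\mathscr{C} = \iota^*\varepsilon$ together with $\Xi_H^\mathscr{C}|_{\mathcal{D}^*} = X_H^\mathcal{D}$ gives $\iota^*\bigl(i_{\Xi_H^\mathscr{C}}\varepsilon\bigr) = i_{X_H^\mathcal{D}}\mu_\mathscr{C}$, which is the bridge between the two volume forms.

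Next I would fix $p\in S^*_\mathcal{D}$ and build an adapted basis of $T_pS^*$. Since $S^*\subset\{h=0\}$ and the $W^\gamma$ are independent, I can take $m$ vertical (fibrewise) vectors $e_1,\dots,e_m$, realized as the vertical lifts of the covectors $m_{\beta\delta}\eta^\delta$, so that $dP^\gamma(e_\beta) = \delta^\gamma_\beta$ and $dh(e_\beta)=0$; a dimension count shows these are complementary to a basis $f_1,\dots,f_{2n-m-1}$ of $T_pS^*_\mathcal{D}$, on which every $dP^\gamma$ vanishes. Evaluating the $(2n-1)$-form $\iota_{S^*}^*i_{\Xi_H^\mathscr{C}}\omega^n = (-1)^m\,\iota_{S^*}^*(\sigma_\mathscr{C}\wedge i_{\Xi_H^\mathscr{C}}\varepsilon)$ on $(e_1,\dots,e_m,f_1,\dots,f_{2n-m-1})$, only the ``diagonal'' shuffle survives, because $\sigma_\mathscr{C}$ is killed whenever any $f_i$ is inserted: it contributes $\det(\delta^\gamma_\beta)=1$ times $(i_{X_H^\mathcal{D}}\mu_\mathscr{C})(f_1,\dots,f_{2n-m-1})$. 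The analogous evaluation of $(\Delta^\mathscr{C})^*\iota_{\tilde{S}^*}^*i_{\Xi_H^\mathscr{C}}\omega^n$ at $\Delta^\mathscr{C}(p)\in\mathcal{D}^*$ is identical, except that $\sigma_\mathscr{C}$ now absorbs the transformed vectors $(\Delta^\mathscr{C})_*e_\beta$ and contributes the transverse determinant $\det B$, where $B^\gamma_\beta := d\bigl(P^\gamma\circ\Delta^\mathscr{C}\bigr)(e_\beta)$, while the $f$-slots reproduce $(\Delta^\mathcal{D})^*i_{X_H^\mathcal{D}}\mu_\mathscr{C}$ (here I use that $\Delta^\mathscr{C}$ preserves $\mathcal{D}^*$ and restricts there to $\Delta^\mathcal{D}$). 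Matching both sides against \eqref{eq:hybrid_jacobian} for $\mathcal{J}_{\omega^n}(\Delta^\mathscr{C})$ and $\mathcal{J}_{\mu_\mathscr{C}}(\Delta^\mathcal{D})$, the common factor $(-1)^m(i_{X_H^\mathcal{D}}\mu_\mathscr{C})(f_\bullet)$ cancels and leaves $\mathcal{J}_{\omega^n}(\Delta^\mathscr{C})|_{S^*_\mathcal{D}} = \det B\cdot\mathcal{J}_{\mu_\mathscr{C}}(\Delta^\mathcal{D})$.

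The hard part is therefore the single identity $\det B = 1$, i.e.\ that the nonholonomic impact is volume-preserving in the directions transverse to $\mathcal{D}^*$. To establish it I would use the explicit global multipliers \eqref{eq:global_NH_impact}. Translating to the Hamiltonian side via $\eta^\alpha(\dot q)=P^\alpha$ and $dh(\dot q)=P(\nabla h)$, and using $\eta^k(W^\gamma)=m^{k\gamma}$, one finds $P^\gamma\circ\Delta^\mathscr{C} = P^\gamma + \lambda_k\,m^{k\gamma} + \varepsilon\,dh(W^\gamma)$, where both correction terms are proportional to the common numerator $N := m_{\alpha\delta}\,dh(W^\delta)\,P^\alpha - P(\nabla h)$ with base-only coefficients. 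Because $e_\beta$ is vertical, those coefficients have vanishing $e_\beta$-derivative, so $B^\gamma_\beta = \delta^\gamma_\beta + (\text{base coeff})\cdot dN(e_\beta)$; and a direct check gives $dN(e_\beta) = m_{\alpha\delta}\,dh(W^\delta)\,dP^\alpha(e_\beta) - d\,P(\nabla h)(e_\beta) = m_{\beta\delta}\,dh(W^\delta) - m_{\beta\delta}\,dh(W^\delta) = 0$, the last equality using $d\,P(\nabla h)(e_\beta) = \langle m_{\beta\delta}\eta^\delta,\nabla h\rangle = m_{\beta\delta}\,dh(W^\delta)$. Hence $B$ is the identity, $\det B = 1$, and the lemma follows. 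I expect the only remaining subtleties to be bookkeeping: confirming that the vertical lifts $e_\beta$ genuinely lie in $T_pS^*$ and span a complement to $T_pS^*_\mathcal{D}$, and tracking the shuffle signs so that they cancel identically on the two sides of \eqref{eq:hybrid_jacobian}.
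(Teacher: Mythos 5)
Your proposal is correct and rests on the same two pillars as the paper's proof: the factorization $\omega^n=\sigma_\mathscr{C}\wedge\varepsilon$ together with $i_{\Xi_H^\mathscr{C}}\sigma_\mathscr{C}=0$, and the invariance of the constraint momenta under the global impact map. The difference is in execution. The paper works entirely at the level of forms: it asserts $\Delta^{\mathscr{C}*}\iota_{\tilde{S}}^*\sigma_\mathscr{C}=\iota_S^*\sigma_\mathscr{C}$ (``the constraints are preserved''), pulls the common factor $\iota_S^*\sigma_\mathscr{C}$ out of both sides of the defining relation \eqref{eq:hybrid_jacobian}, and restricts to $\mathcal{D}^*$. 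You instead evaluate pointwise on an adapted basis $(e_\bullet,f_\bullet)$ of $T_pS^*$, isolate the single surviving shuffle, and reduce the lemma to $\det B=1$. What your route buys is that it actually proves the step the paper only asserts: the identity $B=\mathrm{Id}$ is verified from the explicit multipliers \eqref{eq:global_NH_impact}. In fact your computation can be shortened --- the coefficient multiplying the common numerator $N$ in $P^\gamma\circ\Delta^\mathscr{C}-P^\gamma$ is $\tfrac{2}{D}\bigl(-m_{k\ell}\,dh(W^\ell)\,m^{k\gamma}+dh(W^\gamma)\bigr)=0$ identically, so $P^\gamma\circ\Delta^\mathscr{C}=P^\gamma$ on the nose and the evaluation $dN(e_\beta)=0$ is not needed; this stronger statement is exactly the paper's unproved assertion. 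The remaining bookkeeping you flag (that the vertical lifts lie in $T_pS^*$ and complement $T_pS^*_{\mathcal{D}}$, and that the shuffle signs agree on the two sides) does check out, since $h$ is a base function and the same diagonal shuffle appears on both sides of \eqref{eq:hybrid_jacobian}.
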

\begin{proof}
	A computation yields:
	\begin{equation*}
		\begin{split}
			\Delta^{\mathscr{C}*}\iota_{\tilde{S}}^*i_{\Xi_H^\mathscr{C}}\omega^n &=
			\Delta^{\mathscr{C}*}\iota_{\tilde{S}}^*i_{\Xi_H^\mathscr{C}}\left(\sigma_{\mathscr{C}}\wedge\varepsilon\right) \\
			&= \Delta^{\mathscr{C}*}\iota_{\tilde{S}}^*\left( i_{\Xi_H^\mathscr{C}}\sigma_\mathscr{C}\wedge\varepsilon + (-1)^m\sigma_{\mathscr{C}}\wedge i_{\Xi_{H}^\mathscr{C}}\varepsilon \right) \\
			&= (-1)^m\left(\Delta^{\mathscr{C}*}\iota_{\tilde{S}}^*\sigma_{\mathscr{C}}\right)\wedge
			\left(\Delta^{\mathscr{C}*}\iota_{\tilde{S}}^*i_{\Xi_H^\mathscr{C}}\varepsilon\right) \\
			&= (-1)^m \left(\iota_S^*\sigma_{\mathscr{C}}\right)\wedge \left(\Delta^{\mathscr{C}*}\iota_{\tilde{S}}^*i_{\Xi_H^\mathscr{C}}\varepsilon\right),
		\end{split}
	\end{equation*}
	which uses the fact that the constraints are preserved under the flow. That is, $i_{\Xi_H^\mathscr{C}}\sigma_{\mathscr{C}}=0$ and $\Delta^{\mathscr{C}*}\iota_{\tilde{S}}^*\sigma_{\mathscr{C}} = \iota_S^*\sigma_{\mathscr{C}}$. The right side of \eqref{eq:hybrid_jacobian} produces
	\begin{equation*}
		\begin{split}
			\mathcal{J}_{\omega^n}(\Delta^\mathscr{C})\cdot \iota_S^*i_{\Xi_H^\mathscr{C}}\left(\sigma_{\mathscr{C}}\wedge\varepsilon\right) &=
			\mathcal{J}_{\omega^n}(\Delta^\mathscr{C})\cdot\iota_S^*\left(i_{\Xi_H^\mathscr{C}}\sigma_\mathscr{C}\wedge\varepsilon + (-1)^m\sigma_{\mathscr{C}}\wedge i_{\Xi_H^\mathscr{C}}\varepsilon\right) \\
			&= (-1)^m\mathcal{J}_{\omega^n}(\Delta^\mathscr{C})\cdot\left(\iota_S^*\sigma_\mathscr{C}\right)\wedge \left( \iota_S^*i_{\Xi_H^\mathscr{C}}\varepsilon\right).
		\end{split} 
	\end{equation*}
	Combining both of the above gives
	\begin{equation*}
		\left(\iota_S^*\sigma_{\mathscr{C}}\right)\wedge \left(\Delta^{\mathscr{C}*}\iota_{\tilde{S}}^*i_{\Xi_H^\mathscr{C}}\varepsilon\right) = 
		\mathcal{J}_{\omega^n}(\Delta^\mathscr{C})\cdot\left(\iota_S^*\sigma_\mathscr{C}\right)\wedge \left( \iota_S^*i_{\Xi_H^\mathscr{C}}\varepsilon\right).
	\end{equation*}
	The result follows from restricting to $\mathcal{D}^*$.
\end{proof}
Therefore, to calculate $\mathcal{J}_{\omega^n}(\Delta^\mathscr{C})$, we need to understand $\Delta^{\mathscr{C}*}\iota_{\tilde{S}}^*i_{\Xi_H^\mathscr{C}}\omega^n$. Expanding gives
\begin{equation*}
	\Delta^{\mathscr{C}*}\iota_{\tilde{S}}^*i_{\Xi_H^\mathscr{C}}\omega^n = \Delta^{\mathscr{C}*}\iota_{\tilde{S}}^*\left(n\cdot \nu_H^\mathscr{C}\wedge\omega^{n-1}\right) = 
	n\cdot \left(\Delta^{\mathscr{C}*}\iota_{\tilde{S}}^*\nu_H^\mathscr{C}\right)\wedge
	\left(\Delta^{\mathscr{C}*}\iota_{\tilde{S}}^*\omega\right)^{n-1}.
\end{equation*}
Therefore, the hybrid Jacobian is determined by how much the nonholonomic 1-form and the symplectic form fail the specular condition \eqref{eq:specular_form}. We next present a helpful computational lemma which will be useful for computing the above.

\begin{lemma}\label{le:wedge_power}
	Let $x^1,\ldots,x^{n-1},p_1,\ldots,p_n$ be local coordinates and let $A=\left(\alpha_j^i\right)$ be an $(n-1)\times n$ matrix. Then
	\begin{equation}\label{eq:wedge_power}
		\begin{split}
			\left( \sum_{j=1}^{n-1} \sum_{i=1}^n \, \alpha_j^i \cdot dx^j\wedge dp_i\right)^{n-1}
			= (-1)^{\floor{\frac{n-1}{2}}}\cdot(n-1)!\cdot\sum_{k=1}^{n} \, \det(A_k) \cdot \Omega_k, \\
			\Omega_k := dx^1\wedge\ldots\wedge dx^{n-1} \wedge dp_1 \wedge
			\ldots \wedge \widehat{dp_k}\wedge \ldots dp_n,
		\end{split}
	\end{equation}
	where $A_k$ is the $(n-1)\times (n-1)$ matrix obtained from deleting the $k^{th}$-column from $A$ and the caret $\widehat{dp_k}$ means that $dp_k$ is omitted from the wedge product.
\end{lemma}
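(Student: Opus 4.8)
The plan is to expand the $(n-1)$-st power directly and to track the resulting signs combinatorially. Write $\beta := \sum_{j=1}^{n-1}\sum_{i=1}^n \alpha_j^i\, dx^j\wedge dp_i$ for the given 2-form. Since $\beta$ has even degree, the elementary blocks $dx^j\wedge dp_i$ commute, so expanding $\beta^{n-1}=\beta\wedge\cdots\wedge\beta$ produces a sum of wedge products of $n-1$ such blocks. A monomial $(dx^{j_1}\wedge dp_{i_1})\wedge\cdots\wedge(dx^{j_{n-1}}\wedge dp_{i_{n-1}})$ vanishes unless the indices $j_1,\dots,j_{n-1}$ are pairwise distinct and the indices $i_1,\dots,i_{n-1}$ are pairwise distinct. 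The first condition forces $\{j_1,\dots,j_{n-1}\}=\{1,\dots,n-1\}$, while the second forces the $i$'s to form an $(n-1)$-element subset of $\{1,\dots,n\}$, i.e.\ to omit exactly one index $k$. Hence each surviving monomial is indexed by an omitted column $k$ together with a bijection $\phi:\{1,\dots,n-1\}\to\{1,\dots,n\}\setminus\{k\}$ recording $i_\ell$ as a function of $j_\ell$.

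First I would collapse the ordering of the factors. Because the blocks commute and are distinguished by their (distinct) $dx$-indices, each unordered selection arises from exactly $(n-1)!$ ordered tuples, all contributing identically; this yields the overall factor $(n-1)!$. After reordering the blocks so that the $dx$-indices increase, a fixed pair $(k,\phi)$ contributes
\[
\Big(\textstyle\prod_{j}\alpha_j^{\phi(j)}\Big)\,(dx^1\wedge dp_{\phi(1)})\wedge\cdots\wedge(dx^{n-1}\wedge dp_{\phi(n-1)}).
\]

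Next I would put each such term into the standard basis form $\Omega_k$ via two sign computations. Sliding all $n-1$ factors $dx^\ell$ to the front past the interleaved $dp$'s costs $0+1+\cdots+(n-2)=\binom{n-1}{2}$ transpositions, giving the sign $(-1)^{\binom{n-1}{2}}$. Then reordering $dp_{\phi(1)}\wedge\cdots\wedge dp_{\phi(n-1)}$ into increasing order contributes the sign of $\phi$ (viewed as a permutation once $\{1,\dots,n\}\setminus\{k\}$ is listed increasingly). Summing $\prod_j\alpha_j^{\phi(j)}$ weighted by these signs over all $\phi$ is exactly the Leibniz expansion of $\det(A_k)$, the determinant of $A$ with its $k$-th column deleted. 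Collecting everything gives $\beta^{n-1}=(-1)^{\binom{n-1}{2}}(n-1)!\sum_k\det(A_k)\,\Omega_k$.

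The one point deserving the most care is reconciling the sign exponent with the stated $\lfloor (n-1)/2\rfloor$. Both $\binom{n-1}{2}$ and $\lfloor (n-1)/2\rfloor$ are readily checked to be periodic of period $4$ in $n-1$, with the identical parity pattern $(0,0,1,1)$, so $(-1)^{\binom{n-1}{2}}=(-1)^{\lfloor (n-1)/2\rfloor}$ for every $n$, which completes the proof. The main obstacle throughout is disciplined sign bookkeeping in the two reordering steps; the substantive algebraic content is merely the recognition of $\det(A_k)$ inside the signed expansion.
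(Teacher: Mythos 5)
Your proof is correct and follows essentially the same route as the paper's: expand the power, discard monomials with a repeated $dx^j$ or $dp_i$, partition the surviving terms by the omitted momentum index $k$, and recognize the signed sum over bijections as the Leibniz expansion of $\det(A_k)$. The only cosmetic differences are that the paper organizes the bookkeeping via the multinomial theorem and a set of $0$--$1$ selection matrices, and cites Abraham--Marsden for the reordering sign $(-1)^{\floor{\frac{n-1}{2}}}$, whereas you compute the interleaving sign directly as $(-1)^{\binom{n-1}{2}}$ and correctly verify that the two exponents have the same parity.
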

\begin{proof}
	Recall the multinomial theorem which states that 
	\begin{equation}\label{eq:multinomial}
		\left( \sum_{j=1}^{n-1} \sum_{i=1}^n \, \alpha_j^i \cdot dx^j\wedge dp_i \right)^{n-1} = 
		\sum_{S(d_j^i)=n-1}{n-1\choose d_1^1,\ldots, d_{n-1}^n} \prod_{ij} \left( \alpha_j^i\cdot dx^j\wedge dp_i\right)^{d_j^i},
	\end{equation}
	where
	\begin{equation*}
		S(d_j^i) = \sum_{j=1}^{n-1} \sum_{i=1}^n \, d_j^i, \quad
		{n-1\choose d_1^1,\ldots, d_{n-1}^n} = \frac{n!}{d_1^1!\cdot\ldots\cdot d_{n-1}^n!}.
	\end{equation*}
	Notice that for any $i,j$ we have $\left(\alpha_j^i\cdot dx^j\wedge dp_i\right)^2=0$. This implies that the only nonzero terms in \eqref{eq:multinomial} have $d_j^i\in\{0,1\}$. This simplifies \eqref{eq:multinomial} to
	\begin{equation}\label{eq:multinomial_bool}
		\left( \sum_{j=1}^{n-1} \sum_{i=1}^n \, \alpha_j^i \cdot dx^j\wedge dp_i \right)^{n-1} = 
		(n-1)!\cdot\sum_{\substack{S(d_j^i)=n-1 \\ d_j^i\in\{0,1\}}}\prod_{ij}\, (\alpha_j^i\cdot dx^j\wedge dp_i)^{d_j^i}.
	\end{equation}
	In order to evaluate \eqref{eq:multinomial_bool}, we wish to understand the structure of the matrices $\textbf{d}= \left(d_j^i\right)$ that contribute a nonzero term. In addition to having coefficients in $\{0,1\}$, they also have the following property: if $d_j^i=1$, then $d_j^k=d_k^i=0$ for all $k$. This is due to the fact that $\left(dx^j\wedge dp_i\right)\wedge \left(dx^\ell\wedge dp_k\right)=0$ whenever $j=\ell$ or $i=k$. In other words, the matrix $\textbf{d}$ must have a single nonzero entry in each row and at most one in each column. The matrix $\textbf{d}$ is then given as a column permutation of the matrix
	\begin{equation*}
		\textbf{d}_0 = \begin{bmatrix}
			1 & 0 & \cdots & 0 & 0 \\
			0 & 1 & \cdots & 0 & 0\\
			\vdots & \vdots & \ddots & \vdots & \vdots \\
			0 & 0 & \cdots & 1 & 0
		\end{bmatrix}.
	\end{equation*}
	Let $\mathscr{D}$ be the set of all such matrices and partition it as $\mathscr{D}=\sqcup_{k=1}^n \mathscr{D}_k$ where $\textbf{d}\in\mathscr{D}_k$ if its $k^{th}$-column is identically zero. The expression \eqref{eq:multinomial_bool} becomes
	\begin{equation}\label{eq:power_partition}
		\left( \sum_{j=1}^{n-1} \sum_{i=1}^n \, \alpha_j^i \cdot dx^j\wedge dp_i \right)^{n-1} = (n-1)! \cdot \sum_{k=1}^n \, \sum_{(d_j^i)\in\mathscr{D}_k} \, \prod_{i,j:d_j^i\ne 0} \alpha_j^i\cdot dx^j\wedge dp_i.
	\end{equation}
	By deleting the $k^{th}$-row from $\mathscr{D}_k$ there is a natural isomorphism $S_{n-1}\to \mathscr{D}_k$, where $S_{n-1}$ is the symmetric group of $n-1$ elements. A matrix $(d_j^i)\in\mathscr{D}_k$ if and only if there exists $\sigma\in S_{n-1}$ such that $d_j^i=1$ if and only if $\sigma_k(j)=i$ where
	\begin{equation*}
		\sigma_k(j) = \begin{cases}
			\sigma(j), & \sigma(j) < k \\
			\sigma(j) + 1, & \sigma(j) > k.
		\end{cases}
	\end{equation*}
	(This modified permutation keeps track of the $k^{th}$-column deletion.) Before we finish the calculation of \eqref{eq:wedge_power}, we notice that (see 3.1.3 in \cite{abraham2008foundations})
	\begin{equation*}
		\begin{split}
			\prod_{j=1}^{n-1} dx^j\wedge dp_{\sigma_k(j)} &= 
			(-1)^{\floor{\frac{n-1}{2}}}\cdot dx^1\wedge\ldots\wedge dx^{n-1}\wedge dp_{\sigma_k(1)} \wedge \ldots \wedge dp_{\sigma_k(n-1)} \\
			&= (-1)^{\floor{\frac{n-1}{2}}}\cdot
			\mathrm{sgn}(\sigma)\cdot \underbrace{dx^1\wedge\ldots\wedge dx^{n-1}\wedge dp_1\wedge\ldots\wedge \widehat{dp_k}\wedge\ldots dp_n}_{=:\Omega_k}.
		\end{split}
	\end{equation*}
	Using this, we see that \eqref{eq:power_partition} becomes
	\begin{equation*}
		\begin{split}
			\left( \sum_{j=1}^{n-1} \sum_{i=1}^n \, \alpha_j^i \cdot dx^j\wedge dp_i \right)^{n-1} &= (-1)^{\floor{\frac{n-1}{2}}}\cdot(n-1)! \cdot \sum_{k=1}^n \, \sum_{\sigma\in S_{n-1}} \,
			\mathrm{sgn}(\sigma) \prod_{j=1}^{n-1} \, \alpha_j^{\sigma_k(j)} \cdot \Omega_k \\
			&= (-1)^{\floor{\frac{n-1}{2}}}\cdot(n-1)! \cdot \sum_{k=1}^n \det(A_k) \cdot \Omega_k,
		\end{split}
	\end{equation*}
	which is precisely \eqref{eq:wedge_power}.
\end{proof}
\begin{proposition}\label{prop:pre_jacobian}
	In local coordinates where $h=x^n$, we have
	\begin{equation*}
		\iota_S^*\left(\nu_H^\mathscr{C}\wedge\omega^{n-1}\right) = (-1)^{\floor{\frac{n-1}{2}}}\cdot(n-1)!\cdot \left(\frac{\partial H}{\partial p_n}\right)\cdot dx^1\wedge\ldots\wedge dx^{n-1}\wedge dp_1\wedge\ldots\wedge dp_n.
	\end{equation*}
\end{proposition}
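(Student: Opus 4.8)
The plan is to exploit that $\iota_S^*$ is an algebra homomorphism, so that
\begin{equation*}
\iota_S^*\left(\nu_H^\mathscr{C}\wedge\omega^{n-1}\right) = \left(\iota_S^*\nu_H^\mathscr{C}\right)\wedge\left(\iota_S^*\omega\right)^{n-1},
\end{equation*}
and then to compute the two factors separately. Working in the given coordinates with $h = x^n$, the surface $S$ is cut out by $x^n = 0$, so $\iota_S^* dx^n = 0$ while all of $dx^1,\ldots,dx^{n-1}$ and $dp_1,\ldots,dp_n$ survive the restriction. This immediately kills the $dx^n\wedge dp_n$ term of the symplectic form, leaving $\iota_S^*\omega = \sum_{i=1}^{n-1} dx^i\wedge dp_i$.

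First I would compute $\left(\iota_S^*\omega\right)^{n-1}$ using Lemma \ref{le:wedge_power}, which is stated precisely in the coordinates $x^1,\ldots,x^{n-1},p_1,\ldots,p_n$ adapted to $S$. Here the relevant $(n-1)\times n$ coefficient matrix is $A = [\,I_{n-1}\mid 0\,]$, whose last column vanishes. Deleting column $k$ for any $k\le n-1$ leaves a matrix that retains the zero column, hence is singular, while deleting the final column yields $A_n = I_{n-1}$ with $\det(A_n)=1$. Thus only the $k=n$ term in \eqref{eq:wedge_power} survives, giving
\begin{equation*}
\left(\iota_S^*\omega\right)^{n-1} = (-1)^{\floor{\frac{n-1}{2}}}\cdot (n-1)!\cdot\Omega_n, \qquad \Omega_n = dx^1\wedge\cdots\wedge dx^{n-1}\wedge dp_1\wedge\cdots\wedge dp_{n-1}.
\end{equation*}

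Next I would wedge this against $\iota_S^*\nu_H^\mathscr{C}$. The key observation is that $\Omega_n$ already contains every position differential and every momentum differential except $dp_n$, so wedging with $\Omega_n$ annihilates all of $\nu_H^\mathscr{C}$ except its $dp_n$-component. Recalling from Remark \ref{rmk:GNVF} that
\begin{equation*}
\nu_H^\mathscr{C} = dH - m_{\alpha\beta}\left\{H,P(W^\alpha)\right\}\pi_Q^*\eta^\beta,
\end{equation*}
the correction term is pulled back from $Q$ and hence involves only the position differentials $dx^i$; it has no $dp_n$-component and drops out. From $dH$ only the coefficient $\partial H/\partial p_n$ of $dp_n$ remains, so $\iota_S^*\nu_H^\mathscr{C}\wedge\Omega_n = \tfrac{\partial H}{\partial p_n}\,dp_n\wedge\Omega_n$. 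Reordering $dp_n$ to the rear passes it over the $2n-2$ one-forms constituting $\Omega_n$, contributing a sign $(-1)^{2n-2}=+1$, which assembles into the claimed volume form on $S$.

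The computation is almost entirely bookkeeping; the one place to be careful is confirming that the constraint-correction term in $\nu_H^\mathscr{C}$ contributes nothing, which hinges on $\pi_Q^*\eta^\beta$ being horizontal (free of momentum differentials), and verifying that the reordering sign is exactly $+1$ rather than introducing a spurious factor. Everything else follows formally from Lemma \ref{le:wedge_power} together with the multiplicativity of the pullback.
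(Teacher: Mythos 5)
Your proposal is correct and follows essentially the same route as the paper's own proof: apply Lemma \ref{le:wedge_power} with $A=[\,I_{n-1}\mid 0\,]$ to get $(\iota_S^*\omega)^{n-1}=(-1)^{\floor{\frac{n-1}{2}}}(n-1)!\,\Omega_n$, observe that only the $dp_n$-component of $\nu_H^\mathscr{C}$ survives the wedge with $\Omega_n$ (the constraint correction being horizontal), and check that the reordering sign is $+1$ since $\Omega_n$ has even degree. Your explicit verification that only $\det(A_n)$ is nonzero, and that the $\pi_Q^*\eta^\beta$ term contributes nothing, merely spells out steps the paper leaves implicit.
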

\begin{proof}
	By Lemma \ref{le:wedge_power} we can compute $(\iota_S^*\omega)^{n-1}$ where $\alpha_j^i = \delta_j^i$. This provides
	\begin{equation*}
		\left(\iota_S^*\omega\right)^{n-1} = (-1)^{\floor{\frac{n-1}{2}}}\cdot(n-1)! \cdot \Omega_n.
	\end{equation*}
	Due to the fact that $\Omega_n$ depends on every $dx^j$ and $dp_i$ with the exception of $dp_n$, the only component of $\nu_H^\mathscr{C}$ that wedges with $(\iota_S^*\omega)^{n-1}$ to produce a nonzero term is the $p_n$ term, i.e.
	\begin{equation*}
		\begin{split}
			\nu_H^\mathscr{C} \wedge \left(\iota_S^*\omega\right)^{n-1} &= \left(\frac{\partial H}{\partial p_n}\cdot dp_n\right)\wedge \left(\iota_S^*\omega\right)^{n-1} \\
			&= (-1)^{\floor{\frac{n-1}{2}}}\cdot(n-1)!\cdot\frac{\partial H}{\partial p_n} \cdot \Omega_n\wedge dp_n,
		\end{split}
	\end{equation*}
	where $\Omega_n\wedge dp_n = dp_n\wedge \Omega_n$ because $\Omega_n$ has even degree.
\end{proof}
\begin{corollary}\label{cor:coord_free_restricted}
	In coordinate-free language, we have
	\begin{equation*}
		\iota_S^*\left(\nu_H^\mathscr{C}\wedge\omega^{n-1}\right) = 
		(-1)^{\floor{\frac{n-1}{2}}}\cdot(n-1)!\cdot \pi_Q^*dh\left(\Xi_H^\mathscr{C}\right) \cdot \Omega_h,
	\end{equation*}
	where $\Omega_h$ is a volume on $S^*$ given by
	\begin{equation*}
		\Omega_h = \iota_S^*\varepsilon, \quad dh\wedge\varepsilon = (-1)^{n-1}\cdot dx^1\wedge\ldots\wedge dx^n\wedge dp_1\wedge\ldots\wedge dp_n.
	\end{equation*}
\end{corollary}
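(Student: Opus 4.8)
The plan is to recognize the corollary as nothing more than the intrinsic reformulation of Proposition \ref{prop:pre_jacobian}: the two coordinate-dependent factors appearing there, the scalar $\partial H/\partial p_n$ and the top form $dx^1\wedge\cdots\wedge dx^{n-1}\wedge dp_1\wedge\cdots\wedge dp_n$, must each be matched with a coordinate-free object. Since Proposition \ref{prop:pre_jacobian} is valid in \emph{any} chart adapted to the impact surface (that is, any chart with $h = x^n$), it suffices to establish both identifications in a single such chart; the coordinate-free right-hand side then agrees with the coordinate expression in that chart, hence everywhere.

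First I would treat the scalar factor. In a chart with $h = x^n$ one has $\pi_Q^*dh = dx^n$, so $\pi_Q^*dh(\Xi_H^\mathscr{C}) = dx^n(\Xi_H^\mathscr{C})$. Writing $\Xi_H^\mathscr{C} = a^i\partial_{x^i} + b_i\partial_{p_i}$ and contracting with $\omega = dx^i\wedge dp_i$ gives $\nu_H^\mathscr{C} = i_{\Xi_H^\mathscr{C}}\omega = a^i\,dp_i - b_i\,dx^i$, so that $dx^n(\Xi_H^\mathscr{C}) = a^n$ is precisely the coefficient of $dp_n$ in $\nu_H^\mathscr{C}$. Invoking the formula for the nonholonomic $1$-form from Remark \ref{rmk:GNVF}, namely $\nu_H^\mathscr{C} = dH - m_{\alpha\beta}\{H,P(W^\alpha)\}\,\pi_Q^*\eta^\beta$, I note that the correction terms $\pi_Q^*\eta^\beta$ are pullbacks of forms on $Q$ and so involve only the $dx^i$; therefore the $dp_n$-coefficient of $\nu_H^\mathscr{C}$ coincides with that of $dH$, which is $\partial H/\partial p_n$. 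This yields the identification $\pi_Q^*dh(\Xi_H^\mathscr{C}) = \partial H/\partial p_n$.

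Next I would address the volume factor. The defining relation $dh\wedge\varepsilon = (-1)^{n-1}\,dx^1\wedge\cdots\wedge dx^n\wedge dp_1\wedge\cdots\wedge dp_n$ determines the $(2n-1)$-form $\varepsilon$ only up to a form $\beta$ with $dh\wedge\beta = 0$; any such $\beta$ lies in the ideal generated by $\pi_Q^*dh$, so $\beta = \pi_Q^*dh\wedge\gamma$, and hence $\iota_S^*\beta = 0$ because $h$ vanishes on the impact surface $S^*$. Thus $\Omega_h := \iota_S^*\varepsilon$ is well-defined on $S^*$. In the adapted chart $dh = dx^n$, and taking $\varepsilon = dx^1\wedge\cdots\wedge dx^{n-1}\wedge dp_1\wedge\cdots\wedge dp_n$ I would verify, by commuting $dx^n$ past the $n-1$ factors $dx^1,\ldots,dx^{n-1}$, that $dx^n\wedge\varepsilon = (-1)^{n-1}\,dx^1\wedge\cdots\wedge dx^n\wedge dp_1\wedge\cdots\wedge dp_n$, so this $\varepsilon$ solves the defining equation. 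Since $\varepsilon$ contains no $dx^n$, restriction to $S^*$ leaves it unchanged, giving $\Omega_h = dx^1\wedge\cdots\wedge dx^{n-1}\wedge dp_1\wedge\cdots\wedge dp_n$ as a form on $S^*$ — exactly the top form appearing in Proposition \ref{prop:pre_jacobian}. Substituting both identifications into that proposition produces the claimed identity.

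The only genuinely delicate point is the well-definedness of $\Omega_h$: one must check that the annihilator ambiguity in $\varepsilon$ becomes invisible after pulling back to $S^*$, which is exactly where the hypothesis $S = h^{-1}(0)$ is used. Beyond that, the argument is bookkeeping — in particular matching the sign $(-1)^{n-1}$ arising from transposing $dx^n$ against the normalization chosen in the statement, and confirming that the combinatorial prefactor of Proposition \ref{prop:pre_jacobian} is carried through unchanged.
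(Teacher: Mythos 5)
Your proposal is correct and is exactly the argument the paper leaves implicit: the corollary is the direct translation of Proposition \ref{prop:pre_jacobian}, obtained by checking in an adapted chart that $\pi_Q^*dh(\Xi_H^\mathscr{C}) = \partial H/\partial p_n$ (since the $\pi_Q^*\eta^\beta$ corrections to $dH$ carry no $dp_i$ components) and that $\Omega_h$ restricts to $dx^1\wedge\cdots\wedge dx^{n-1}\wedge dp_1\wedge\cdots\wedge dp_n$. Your additional check that the ambiguity in $\varepsilon$ dies under $\iota_S^*$ is a worthwhile detail the paper glosses over.
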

We are now ready to proceed with calculating the hybrid Jacobian.
\begin{theorem}\label{th:nonh_hybrid_jac}
	The hybrid Jacobian is given by 
	\begin{equation}\label{eq:hybrid_nh_jacobian}
		\mathcal{J}_{\omega^n}(\Delta^\mathscr{C}) = \frac{(2\cdot\pi_{Q}^*\pi_{\mathcal{D}}^*dh - \pi_Q^*dh)(\Xi_H^\mathscr{C})}{\pi_Q^*dh(\Xi_H^\mathscr{C})}.
	\end{equation}
	In particular, $\mathcal{J}_{\mu_{\mathscr{C}}}(\Delta^\mathcal{D}) = 1$.
\end{theorem}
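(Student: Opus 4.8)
The plan is to exploit the factorization recorded just before Lemma~\ref{le:wedge_power}, but rather than resolving the two specular failures separately I would apply Corollary~\ref{cor:coord_free_restricted} directly. That computation is purely local on the level set $\{h=0\}\subset T^*Q$, hence valid on every slice of it; since both $S^*$ and $\tilde S=\Delta^\mathscr{C}(S^*)$ are open subsets of $\{h=0\}$, the same expression holds for $\iota_{\tilde S}^*$ as for $\iota_S^*$:
\begin{equation*}
\iota_{\tilde S}^*(\nu_H^\mathscr{C}\wedge\omega^{n-1}) = (-1)^{\floor{\frac{n-1}{2}}}(n-1)!\;\pi_Q^*dh(\Xi_H^\mathscr{C})\;\Omega_h.
\end{equation*}
Substituting into the defining relation \eqref{eq:hybrid_jacobian}, the universal prefactor cancels and the problem collapses to
\begin{equation*}
\mathcal{J}_{\omega^n}(\Delta^\mathscr{C}) = \frac{\Delta^{\mathscr{C}*}\!\left[\pi_Q^*dh(\Xi_H^\mathscr{C})\right]}{\pi_Q^*dh(\Xi_H^\mathscr{C})}\cdot\frac{\Delta^{\mathscr{C}*}\Omega_h}{\Omega_h},
\end{equation*}
so I must evaluate how the scalar $\pi_Q^*dh(\Xi_H^\mathscr{C})$ and the volume $\Omega_h$ transform under $\Delta^\mathscr{C}$.

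The first factor is a routine substitution. As the base velocity of $\Xi_H^\mathscr{C}$ is $\partial H/\partial p$ (Remark~\ref{rmk:GNVF}; the constraint correction is horizontal and affects only $\dot p$), pulling back by $\Delta^\mathscr{C}$ replaces $\dot q$ with the post-impact velocity $\dot q^+ = \dot q + \lambda_k W^k + \varepsilon\nabla h$ of \eqref{eq:restricted_NH_form}, so the numerator becomes $dh(\dot q^+)$. Inserting the multipliers \eqref{eq:global_NH_impact} and writing $D := dh(\nabla h) - m_{\alpha\beta}dh(W^\alpha)dh(W^\beta)$ for the common denominator, $D$ cancels and I expect
\begin{equation*}
dh(\dot q^+) = 2\,m_{\alpha\beta}dh(W^\beta)\eta^\alpha(\dot q) - dh(\dot q).
\end{equation*}
Recognizing $m_{\alpha\beta}\eta^\alpha(\dot q)W^\beta$ as the $g$-orthogonal projection of $\dot q$ onto $\mathcal{D}^\perp=\mathrm{span}\{W^\alpha\}$ identifies $dh(\dot q)-m_{\alpha\beta}dh(W^\beta)\eta^\alpha(\dot q)=\pi_\mathcal{D}^*dh(\Xi_H^\mathscr{C})$, whence $dh(\dot q^+) = -\,(2\pi_Q^*\pi_\mathcal{D}^*dh - \pi_Q^*dh)(\Xi_H^\mathscr{C})$.

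The second factor is the crux. Since $\Delta^\mathscr{C}$ fixes the base and $\Omega_h$ is, along $\{h=0\}$, the wedge of the base coordinate differentials with the full fibre volume $dp_1\wedge\cdots\wedge dp_n$, the pullback is $\Delta^{\mathscr{C}*}\Omega_h = (\det O)\,\Omega_h$, where $O\colon p\mapsto p^+ = p + \lambda_k\eta^k + \varepsilon\,dh$ is the fibrewise momentum impact. The observation that makes this tractable is that in \eqref{eq:global_NH_impact} both $\varepsilon$ and every $\lambda_k$ are proportional to the single linear functional $\langle p,Z\rangle := m_{\alpha\beta}dh(W^\beta)\eta^\alpha(\dot q)-dh(\dot q)$, where $Z = m_{\alpha\beta}dh(W^\beta)W^\alpha - \nabla h$. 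Hence $O-\mathrm{Id}$ has rank one,
\begin{equation*}
Op = p + \tfrac{2}{D}\langle p,Z\rangle\,Y, \qquad Y := dh - m_{k\ell}dh(W^\ell)\eta^k,
\end{equation*}
and the matrix determinant lemma gives $\det O = 1 + \tfrac{2}{D}Y(Z)$. Using $\eta^k(W^\alpha)=m^{k\alpha}$, $\eta^k(\nabla h)=dh(W^k)$, and $m_{\alpha\beta}m^{k\alpha}=\delta^k_\beta$ I expect $\eta^k(Z)=0$ and $dh(Z) = -D$, so that $Y(Z)=dh(Z)=-D$ and $\det O = -1$. The main obstacle is precisely this determinant; everything hinges on spotting the shared functional $\langle p,Z\rangle$, which reveals $O$ as a rank-one (indeed energy-preserving, hence reflection-type) perturbation of the identity.

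Finally I would assemble the pieces. The two minus signs combine,
\begin{equation*}
\mathcal{J}_{\omega^n}(\Delta^\mathscr{C}) = (-1)\cdot\frac{-(2\pi_Q^*\pi_\mathcal{D}^*dh - \pi_Q^*dh)(\Xi_H^\mathscr{C})}{\pi_Q^*dh(\Xi_H^\mathscr{C})} = \frac{(2\pi_Q^*\pi_\mathcal{D}^*dh - \pi_Q^*dh)(\Xi_H^\mathscr{C})}{\pi_Q^*dh(\Xi_H^\mathscr{C})},
\end{equation*}
which is \eqref{eq:hybrid_nh_jacobian}. Restricting to $\mathcal{D}^*$, every admissible velocity lies in $\mathcal{D}$, so $\eta^\alpha(\dot q)=0$ and $\pi_\mathcal{D}^*dh(\Xi_H^\mathscr{C})=\pi_Q^*dh(\Xi_H^\mathscr{C})$; the numerator collapses and $\mathcal{J}=1$. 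The preceding Lemma, which gives $\mathcal{J}_{\omega^n}(\Delta^\mathscr{C})|_{\mathcal{D}^*} = \mathcal{J}_{\mu_\mathscr{C}}(\Delta^\mathcal{D})$, then yields $\mathcal{J}_{\mu_\mathscr{C}}(\Delta^\mathcal{D}) = 1$.
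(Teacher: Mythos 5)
Your proof is correct, and it reaches \eqref{eq:hybrid_nh_jacobian} by a genuinely different route than the paper. The paper attacks $\bigl(\Delta^{\mathscr{C}*}\iota_{\tilde{S}}^*\omega\bigr)^{n-1}$ head-on: it invokes the combinatorial Lemma \ref{le:wedge_power} to expand the $(n-1)$-st wedge power in terms of the cofactor determinants $\det A_k$ of the $(n-1)\times n$ matrix $\alpha_j^i=\delta_j^i-2\kappa u^iv_j$, evaluates those determinants explicitly (only for $n=4$, with the general case asserted to be analogous), and then wedges against $\Delta^{\mathscr{C}*}\iota_{\tilde{S}}^*\nu_H^{\mathscr{C}}$ using conservation of energy. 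You instead observe that Corollary \ref{cor:coord_free_restricted} is an identity on the whole hypersurface $\{h=0\}\subset T^*Q$, hence applies verbatim on $\tilde{S}$, which factors the hybrid Jacobian into a scalar ratio $\bigl(\pi_Q^*dh(\Xi_H^{\mathscr{C}})\circ\Delta^{\mathscr{C}}\bigr)/\pi_Q^*dh(\Xi_H^{\mathscr{C}})$ times the fibrewise determinant of the momentum impact. Your evaluation of both factors checks out: $dh(\dot q^+)=2m_{\alpha\beta}dh(W^\beta)\eta^\alpha(\dot q)-dh(\dot q)=-(2\pi_Q^*\pi_{\mathcal D}^*dh-\pi_Q^*dh)(\Xi_H^{\mathscr{C}})$ follows from the cancellation of the common denominator $D$, and the rank-one structure $Op=p+\tfrac{2}{D}\langle p,Z\rangle Y$ with $\eta^k(Z)=0$, $dh(Z)=-D$ gives $\det O=-1$ by the matrix determinant lemma, so the two signs cancel. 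What your approach buys is uniformity in $n$ (no case-by-case cofactor computation) and a structural explanation of the discrete factor: the momentum impact is an energy-preserving rank-one reflection, so its contribution to the Jacobian is exactly $-1$. What the paper's approach buys is that Lemma \ref{le:wedge_power} is developed anyway and reused in Proposition \ref{prop:pre_jacobian}, so the combinatorial machinery is already on the table; but as a proof of this theorem your argument is cleaner and arguably more complete, since it does not defer the general-$n$ determinant computation to an analogy with $n=4$.
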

\begin{proof}
	We will choose local coordinates such that $h=x^n$ in a manner similar to Proposition \ref{prop:pre_jacobian} and later translate to a coordinate-free language as in Corollary \ref{cor:coord_free_restricted}.
	We will first compute $\left(\Delta^{\mathscr{C}*}\iota_{\tilde{S}}^*\omega\right)^{n-1}$. In coordinates where $h=x^n$, this becomes
	\begin{equation*}
		\left(\Delta^{\mathscr{C}*}\iota_{\tilde{S}}^*\omega\right)^{n-1} = 
		\left( dx^1\wedge d(p_1\circ\Delta^\mathscr{C}) + \ldots + dx^{n-1}\wedge d(p_{n-1}\circ\Delta^\mathscr{C})\right)^{n-1}.
	\end{equation*}
	The map $\Delta^\mathscr{C}$ given by Remark \ref{rmk:global_impact} depends on both $x$ and $p$:
	\begin{equation*}
		d\left(p_j\circ\Delta^\mathscr{C}\right) = \alpha_j^i dp_i + \beta_{ij} dx^i.
	\end{equation*}
	With this notation, we have
	\begin{equation*}
		\begin{split}
			\left(\Delta^{\mathscr{C}*}\iota_{\tilde{S}}^*\omega\right)^{n-1} &=
			\left( \alpha_j^i \cdot dx^j\wedge dp_i + \beta_{ij} \cdot dx^j\wedge dx^i \right)^{n-1} \\
			&= \left(\alpha_j^i\cdot dx^j\wedge dp_i \right)^{n-1},
		\end{split}
	\end{equation*}
	where the $\beta_{ij}\cdot dx^j\wedge dx^i$ terms do not contribute because any piece containing them will necessarily have a repeated term. Therefore if we can determine the coefficients $\alpha_j^i$, Lemma \ref{le:wedge_power} shows how to compute the product. The expression \eqref{eq:global_NH_impact} shows that the impacts are linear in the momentum and so the coefficients are
	\begin{equation*}
		\alpha_j^i = \delta_j^i - 2\frac{\left(\pi_{\mathcal{D}}\nabla h\right)^i}
		{dh\left(\pi_{\mathcal{D}}\nabla h\right)} \left(
		\pi_{\mathcal{D}}^*dh\right)_j,
	\end{equation*}
	where $\left(\pi_{\mathcal{D}}\nabla h\right)^i$ is the $i^{th}$-component of the vector $\pi_{\mathcal{D}}\nabla h$ and similarly for $\pi_{\mathcal{D}}^*dh$. 
	
	We must now calculate the determinants of the matrices $A_k$. For the remainder of the proof, we will deal with the $n=4$ case but the general case works in the same way. For ease of notation, let $u:=\pi_{\mathcal{D}}\nabla h$ and $v:=\pi_{\mathcal{D}}^*dh$. Notice that in our choice of local coordinates, 
	$$dh(\pi_{\mathcal{D}}\nabla h) = dx^n(\pi_{\mathcal{D}}\nabla h) = \left(
	\pi_{\mathcal{D}}\nabla h\right)^n = u^n =: \frac{1}{\kappa}.$$
	The matrix $A = (\alpha_j^i)$ is given by
	\begin{equation*}
		A = \begin{bmatrix}
			1 - 2\kappa u^1v_1 & - 2\kappa u^2v_1 & - 2\kappa u^3v_1 & - 2\kappa u^4v_1 \\
			- 2\kappa u^1v_2 & 1 - 2\kappa u^2v_2 & - 2\kappa u^3v_2 & - 2\kappa u^4v_2 \\
			- 2\kappa u^1v_3 & - 2\kappa u^2v_3 & 1 - 2\kappa u^3v_3 & - 2\kappa u^4v_3 \\
		\end{bmatrix}.
	\end{equation*}
	The determinants $\det A_k$ are
	\begin{equation*}
		\begin{split}
			\det A_1 &= -2v_1, \\
			\det A_2 &= 2v_2, \\
			\det A_3 &= -2v_3, \\
			\det A_4 &= 1 - 2\kappa\left(u^1v_1 + u^2v_2 + u^3v_3\right) = 2v^4-1.
		\end{split}
	\end{equation*}
	Lemma \ref{le:wedge_power} asserts that 
	\begin{equation}\label{eq:th_n=4_omega}
		\left(\Delta^{\mathscr{C}*}\iota_{\tilde{S}}^*\omega\right)^3 = 
		(-1)^{\floor{\frac{n-1}{2}}}\cdot(n-1)!\cdot
		2\left[\left(
		-v_1\cdot\Omega_1 + v_2\cdot\Omega_2 - v_3\cdot\Omega_3 + v_4\cdot\Omega_4\right) - \Omega_4\right].
	\end{equation}
	To finish the theorem, we need to compute the wedge product of $\Delta^{\mathscr{C}*}\iota_{\tilde{S}}^*\nu_H^\mathscr{C}$ with \eqref{eq:th_n=4_omega}. It turns out that $\Delta^{\mathscr{C}*}\iota_{\tilde{S}}^*\nu_H^\mathscr{C}\wedge \left(\Delta^{\mathscr{C}*}\iota_{\tilde{S}}^*\omega\right)^3 = \iota_{S}^*\nu_H^\mathscr{C}\wedge \left(\Delta^{\mathscr{C}*}\iota_{\tilde{S}}^*\omega\right)^3$.
	This is because 
	\begin{equation*}
		\begin{split}
			\Delta^{\mathscr{C}*}\iota_{\tilde{S}}^*\nu_H^\mathscr{C} &= 
			\Delta^{\mathscr{C}*}\iota_{\tilde{S}}^* dH - \Delta^{\mathscr{C}*}\iota_{\tilde{S}}^*\left( m_{\alpha\beta}\{H,P(W^\alpha)\}\pi_Q^*\eta^\beta\right) \\
			&=\iota_S^*dH - \Delta^{\mathscr{C}*}\iota_{\tilde{S}}^*\left( m_{\alpha\beta}\{H,P(W^\alpha)\}\pi_Q^*\eta^\beta\right),
		\end{split}
	\end{equation*}
	by conservation of energy. Notice that the second term above has the form $\gamma_i\cdot dx^i$ which pairs to zero when wedged with any $\Omega_k$. Therefore,
	\begin{equation*}
		\begin{split}
			\Delta^{\mathscr{C}*}\iota_{\tilde{S}}^*\left(\nu_H^\mathscr{C}\wedge\omega^{n-1}\right) &= \iota_{S}^*dH \wedge 	\left(\Delta^{\mathscr{C}*}\iota_{\tilde{S}}^*\omega\right)^{n-1} \\
			&= (-1)^{\floor{\frac{n-1}{2}}}\cdot(n-1)!\cdot
			\left[ 2\left( v_j \cdot \frac{\partial H}{\partial p_j} \right) - \frac{\partial H}{\partial p_4} \right] \cdot\Omega\\
			&= (-1)^{\floor{\frac{n-1}{2}}}\cdot(n-1)! \cdot \left[
			2 \cdot \pi_{\mathcal{D}}^*dh\left(\Xi_H^\mathscr{C}\right) - dh\left(\Xi_H^\mathscr{C}\right)\right]\cdot\Omega.
		\end{split}
	\end{equation*}
	The result follows from applying Proposition \ref{prop:pre_jacobian} which says
	\begin{equation*}
		\iota_S^*\left(\nu_H^\mathscr{C}\wedge\omega^{n-1}\right) = 
		(-1)^{\floor{\frac{n-1}{2}}}\cdot(n-1)!\cdot dh\left(\Xi_H^\mathscr{C}\right)\cdot\Omega.
	\end{equation*}
	The quotient of coefficients is \eqref{eq:hybrid_nh_jacobian}.
\end{proof}

Since $\mathcal{J}_{\mu_{\mathscr{C}}}(\Delta^\mathcal{D})=1$, we need an invariant density to be conserved across impacts: if $f\mu_{\mathscr{C}}$ is invariant then $f\circ\Delta^\mathcal{D} = f|_{S^*_\mathcal{D}}$. As it turns out, there is a clear qualitative difference between nonholonomic systems with measures depending on configurations versus those who do not. This is because $\Delta^\mathcal{D}$ is the identity on the configuration variables but is not on the momenta/velocities. If $f$ only depends on the configurations, then $f\circ\Delta=f$ is automatically satisfied. If $f$ depends on the momenta/velocities, then we can always choose some impact surface, $S$, such that $f\circ\Delta\ne f$. This is summarized in the following proposition.
\begin{proposition}\label{prop:hybrid_volume}
	Let $L:TQ\to\mathbb{R}$ be a natural Lagrangian and $\mathcal{D}\subset TQ$ a regular distribution. Suppose that there exists an invariant volume form $f\mu_{\mathscr{C}}$ such that $f = \pi_Q^*h$ for some $h:Q\to\mathbb{R}$ (cf. Theorem \ref{th:NHVol}). Then $f\mu_{\mathscr{C}}\in\mathscr{A}_{\mathcal{H}}$ where $\mathcal{H} = (\mathcal{D}^*,S^*_\mathcal{D},X_H^\mathcal{D},\Delta^\mathcal{D})$ for \textit{any} $S\subset Q$.
\end{proposition}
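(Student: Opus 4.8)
The plan is to check the two conditions characterizing a hybrid-invariant volume form from the theorem opening the subsection on hybrid-invariant volumes: for the system $\mathcal{H}=(\mathcal{D}^*,S^*_\mathcal{D},X_H^\mathcal{D},\Delta^\mathcal{D})$, the form $f\mu_{\mathscr{C}}$ lies in $\mathscr{A}_\mathcal{H}$ if and only if $\mathrm{div}_{f\mu_{\mathscr{C}}}(X_H^\mathcal{D})=0$ and the energy condition $\Delta^{\mathcal{D}*}\iota_{\tilde S}^* i_{X_H^\mathcal{D}}(f\mu_{\mathscr{C}})=\iota_S^* i_{X_H^\mathcal{D}}(f\mu_{\mathscr{C}})$ holds, where I abbreviate $S=S^*_\mathcal{D}$ and $\tilde S=\Delta^\mathcal{D}(S^*_\mathcal{D})$. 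The divergence condition is free: by hypothesis $f\mu_{\mathscr{C}}$ is exactly the invariant volume furnished by Theorem \ref{th:NHVol}, so $\mathcal{L}_{X_H^\mathcal{D}}(f\mu_{\mathscr{C}})=0$ already along the continuous nonholonomic flow. Thus the whole proposition reduces to verifying the energy condition, and everything $S$-dependent is concentrated there.

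For the energy condition I would first separate the density from the volume. Because $f$ is a $0$-form and the interior product annihilates functions, $i_{X_H^\mathcal{D}}(f\mu_{\mathscr{C}})=f\cdot i_{X_H^\mathcal{D}}\mu_{\mathscr{C}}$, and since pullbacks distribute over a function-times-form product,
\begin{equation*}
\Delta^{\mathcal{D}*}\iota_{\tilde S}^* i_{X_H^\mathcal{D}}(f\mu_{\mathscr{C}})=\left(\Delta^{\mathcal{D}*}\iota_{\tilde S}^* f\right)\cdot\left(\Delta^{\mathcal{D}*}\iota_{\tilde S}^* i_{X_H^\mathcal{D}}\mu_{\mathscr{C}}\right).
\end{equation*}
The second factor is precisely what the hybrid Jacobian measures: by \eqref{eq:hybrid_jacobian} it equals $\mathcal{J}_{\mu_{\mathscr{C}}}(\Delta^\mathcal{D})\cdot\iota_S^* i_{X_H^\mathcal{D}}\mu_{\mathscr{C}}$, and Theorem \ref{th:nonh_hybrid_jac} supplies $\mathcal{J}_{\mu_{\mathscr{C}}}(\Delta^\mathcal{D})=1$, collapsing it to $\iota_S^* i_{X_H^\mathcal{D}}\mu_{\mathscr{C}}$.

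It then remains to show the density factor is unchanged, $\Delta^{\mathcal{D}*}\iota_{\tilde S}^* f=\iota_S^* f$, which is where the configuration-only hypothesis $f=\pi_Q^* h$ enters. Regarding $\iota_{\tilde S}\circ\Delta^\mathcal{D}$ as the impact map into $\mathcal{D}^*$, one gets $\Delta^{\mathcal{D}*}\iota_{\tilde S}^* f=(\pi_Q\circ\Delta^\mathcal{D})^* h$, and Assumption \ref{ass:impact} (the impact is the identity on the base, $\pi_Q\circ\Delta^\mathcal{D}=\pi_Q|_{S}$) turns this into $\iota_S^*\pi_Q^* h=\iota_S^* f$. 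Multiplying the two factors recovers $\Delta^{\mathcal{D}*}\iota_{\tilde S}^* i_{X_H^\mathcal{D}}(f\mu_{\mathscr{C}})=(\iota_S^* f)\cdot(\iota_S^* i_{X_H^\mathcal{D}}\mu_{\mathscr{C}})=\iota_S^* i_{X_H^\mathcal{D}}(f\mu_{\mathscr{C}})$, the desired energy condition. The phrase \emph{for any} $S\subset Q$ is then automatic, since both inputs are surface-independent: Theorem \ref{th:nonh_hybrid_jac} gives $\mathcal{J}_{\mu_{\mathscr{C}}}(\Delta^\mathcal{D})=1$ regardless of $S$, and the impact fixes configurations regardless of $S$.

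The genuine difficulty has already been absorbed into Theorem \ref{th:nonh_hybrid_jac}; given that the hybrid Jacobian is identically $1$, what is left is essentially bookkeeping. The one point I would be careful about is keeping the domains and corestrictions of $\iota_S$, $\iota_{\tilde S}$ and $\Delta^\mathcal{D}$ straight so that the factorization of $i_{X_H^\mathcal{D}}(f\mu_{\mathscr{C}})$ and the invocation of Assumption \ref{ass:impact} are applied on the correct submanifolds; no further estimates or constructions are needed.
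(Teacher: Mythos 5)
Your proposal is correct and follows essentially the same route as the paper: the paper's justification is precisely that Theorem \ref{th:nonh_hybrid_jac} gives $\mathcal{J}_{\mu_{\mathscr{C}}}(\Delta^\mathcal{D})=1$, so hybrid-invariance reduces to $f\circ\Delta^\mathcal{D}=f|_{S^*_\mathcal{D}}$, which holds automatically for configuration-only densities because the impact is the identity on the base. Your write-up merely makes explicit the factorization of the energy condition that the paper leaves as a remark in the paragraph preceding the proposition.
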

Before we demonstrate this with examples, we will address the Zeno issue in measure-preserving systems.
%%%%%%%%%%%%%%%%%%%%%%%%%%%%%%%%%%%%%%%%%%%%%%%%%%%%%%%%%%%%%%%%%%%%%%%%
%%%%%%%%%%%%%%%%%%%%%%%%%%%%%%%%%%%%%%%%%%%%%%%%%%%%%%%%%%%%%%%%%%%%%%%%
%%%%%%%%%%%%%%%%%%%%%%%%%%%%%%%%%%%%%%%%%%%%%%%%%%%%%%%%%%%%%%%%%%%%%%%%
%%%%%%%%%%%%%%%%%%%%%%%%%%%%%%%%%%%%%%%%%%%%%%%%%%%%%%%%%%%%%%%%%%%%%%%%
\section{The Zeno issue in volume-preserving systems}\label{sec:Zeno}
In dynamical systems, invariant measures are useful for studying recurrent properties e.g. the Poincar\'{e} recurrence theorem and ergodic theory. However, in the context of hybrid systems, the existence of invariant measures has another advantage: they impose strong limitations on the Zeno behavior of the trajectories.

\begin{definition}[Zeno States]
	Let $\varphi_t^\mathcal{H}$ be the hybrid flow of a hybrid system $\mathcal{H} = (M,S,X,\Delta)$. A point $x\in M$ has a Zeno trajectory if there exists an increasing sequence of times $\{t_i\}_{i=0}^\infty$ such that $\varphi_{t_i}^\mathcal{H}(x)\in S$ for all $i$ and $t_i\to t_\infty < \infty$.
\end{definition}

It seems that there are only sufficient conditions for Zeno behavior \cite{4434891}. However, to the best of our knowledge, there are no results on necessary conditions which would provide a way to rule them out all together. We demonstrate that, under a few additional assumptions, the existence of an invariant measure rules our Zeno behavior (almost everywhere). For more results and properties of Zeno states, cf. e.g. \cite{4434891,1656623,brogliato2016,mattKv,orstability}.

To rule out Zeno behavior in hybrid systems, it is important to subdivide this into two classes: \textit{spasmodic} and \textit{steady}.
\begin{definition}
	Let $x\in M$ have a Zeno trajectory. The trajectory is spasmodic if the sequence $\{\varphi_{t_i}^\mathcal{H}\}_{i=0}^\infty$ escapes every compact set as $i\to \infty$. The trajectory is called steady if it is not spasmodic.
\end{definition}
%%%%%%%%%%%%%%%%%%%%%%%%%%%%%%%%%%%%%%%%%%%%%%%%%%%%%%%%%%%%%%%%%%%%%%%%
%%%%%%%%%%%%%%%%%%%%%%%%%%%%%%%%%%%%%%%%%%%%%%%%%%%%%%%%%%%%%%%%%%%%%%%%
\subsection{Spasmodic Zeno Trajectories}
Our goal is to utilize volume-preservation to show that Zeno almost never happens. However, this is \textit{not} true for spasmodic Zeno trajectories as this subsection demonstrates.
\begin{example}[Super-elastic spasmodic]\label{ex:spasmotic}
	Consider the hybrid mechanical system with $Q = [0,1]$ and $S = \{0,1\}$. Suppose that the impact map is given by $\Delta:\dot{q} \mapsto -\alpha\cdot \dot{q}$ where $\alpha > 0$ (which injects energy into the system). Then the particle bounces off the walls faster and faster until breaking in a finite amount of time.
	\begin{equation*}
		t_{Zeno} = \frac{1}{v_0}\, \sum_{k=0}^\infty \, \alpha^{-k} = \frac{1}{v_0}\cdot \frac{\alpha}{\alpha-1} < \infty.
	\end{equation*}
	We have a finite Zeno time but we also have 
	\begin{equation*}
		\lim_{t\to t_{Zeno}^-} \, \lvert\dot{q}(t)\rvert = \infty.
	\end{equation*}
%%%%%%%%%%%%%%%%%%%%%%%%%%%%%%%%%%%%%%%%%%%%%%%%%%%%%%%%%%%%%%%%%%%%%%%%
\begin{figure}
	\centering
	\includegraphics[scale=0.75]{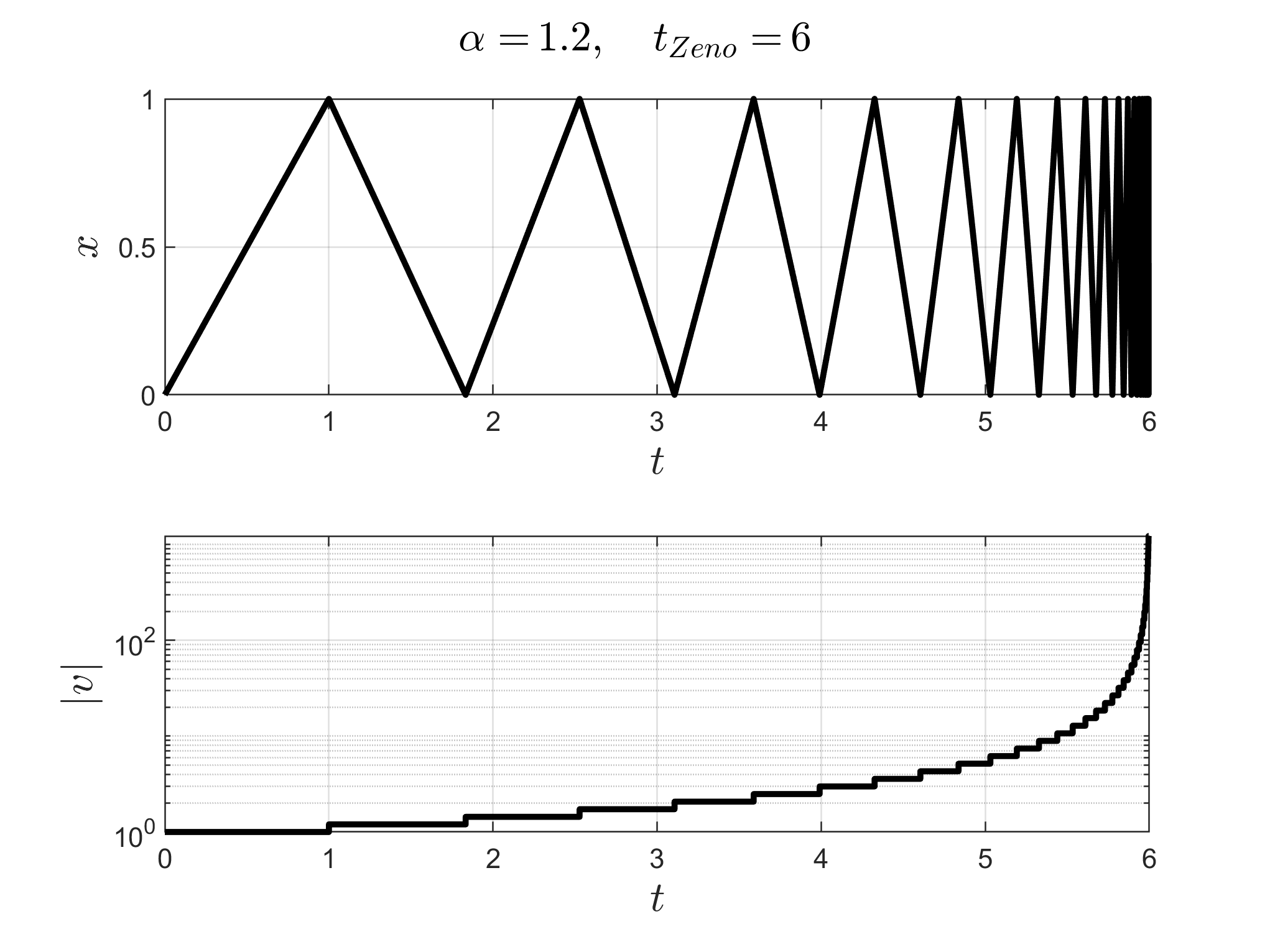}
	\caption{Trajectory of the spasmodic Zeno trajectory discussed in Example \ref{ex:spasmotic}. This has the first 40 impacts.}
\end{figure}
The common idea is that only sub-elastic collisions can lead to Zeno states. However, super-elastic collisions can still pose issues.
%%%%%%%%%%%%%%%%%%%%%%%%%%%%%%%%%%%%%%%%%%%%%%%%%%%%%%%%%%%%%%%%%%%%%%%%
\end{example}

\begin{example}[Volume-preserving spasmodic]
	Consider a modification of Example \ref{ex:spasmotic} where $Q = [0,1]\times\mathbb{R}$ and $S = \{0,1\}\times \mathbb{R}$. We have the standard dynamics
	\begin{equation*}
		X = \dot{x}\frac{\partial}{\partial x} + \dot{y}\frac{\partial}{\partial y},
	\end{equation*}
	and suppose the impact map has the form $\delta(\dot{x},\dot{y}) = (-\alpha\dot{x},\beta\dot{y})$. It follows from the previous example that when $\alpha >1$, we have a spasmodic Zeno state. However, can we choose $\beta < 1$ such that this system is volume-preserving? This would lead to Zeno states in volume-preserving systems which is troublesome. Consider the volume form $\mu = dx\wedge dy\wedge d\dot{x}\wedge d\dot{y}$, then we have
	\begin{equation*}
		\iota_{\tilde{S}}^*i_X\mu = \dot{x} dy\wedge d\dot{x}\wedge d\dot{y}.
	\end{equation*}
	This gives us a hybrid Jacobian of
	\begin{equation*}
		\mathcal{J}_\mu(\Delta) = \alpha^2\beta.
	\end{equation*}
	If we take $\beta = \alpha^{-2}$, then $\mathcal{J}_\mu(\Delta)=1$ and we have a volume-preserving system with spasmodic Zeno states.
\end{example}

If the hybrid system possesses a Lyapunov function (i.e. a proper function which is non-increasing along trajectories) then spasmodic Zeno states are prohibited. This is good news as both unconstrained and nonholonomic systems preserve energy which is a Lyapunov function, provided it is proper.
\begin{proposition}
	Let $\mathcal{H} = (T^*Q,S^*,X_H,\Delta)$ be an impact Hamiltonian system with a natural Hamiltonian (this also holds true for nonholonomic systems). If $Q$ is compact, then spasmodic Zeno states do not occur.
\end{proposition}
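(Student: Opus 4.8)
The plan is to exhibit the total energy $H$ as a \emph{proper} function that is constant along every hybrid trajectory, and then to observe that such a function confines each trajectory to a compact set, which is incompatible with the defining property of a spasmodic Zeno trajectory. This realizes exactly the mechanism flagged in the discussion preceding the statement: energy is a (weak) Lyapunov function, and it is properness that upgrades this into an obstruction against escaping to infinity.

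First I would verify that $H$ is conserved along the hybrid flow. Along the smooth dynamics this is immediate, since $dH(X_H) = i_{X_H}i_{X_H}\omega = 0$, so $\mathcal{L}_{X_H}H = 0$. Across an impact, the second Hamiltonian corner condition in \eqref{eq:hamiltonian_impact} reads $H^+ = H^-$, so $H$ is also preserved by $\Delta$. Consequently $t \mapsto H(\varphi_t^\mathcal{H}(x))$ is constant, and in particular weakly non-increasing, so $H$ is a Lyapunov function for $\mathcal{H}$. Next I would show that $H$ is proper once $Q$ is compact. For a natural Hamiltonian $H(q,p) = \tfrac{1}{2}g_q^{-1}(p,p) + V(q)$, compactness of $Q$ furnishes a lower bound $V \geq V_{\min} > -\infty$ and a uniform lower bound $\lambda > 0$ on the smallest eigenvalue of the (continuous, positive-definite) cometric $g_q^{-1}$. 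Hence on $\{H \leq c\}$ one has $\tfrac{1}{2}\lambda\,\lVert p\rVert^2 \leq c - V_{\min}$, so the fibre coordinate stays in a bounded region; the sublevel set is then a closed subset of the product of the compact base with a relatively compact fibre region, hence compact, and $H$ is proper.

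Finally I would combine these observations: if $x$ admitted a spasmodic Zeno trajectory, the impact points $\varphi_{t_i}^\mathcal{H}(x)$ would have to escape every compact set, yet they all lie on the single energy level set $\{H = H(x)\}$, which is closed inside the compact sublevel set $\{H \leq H(x)\}$ and therefore itself compact. Taking this level set as the forbidden compact set yields a contradiction, ruling out spasmodic Zeno. The one step I expect to require the most care is the properness argument, since it is precisely the compactness of $Q$ (through the lower bounds on $V$ and on $g_q^{-1}$) that forces the momenta to remain bounded on each energy shell; the parenthetical nonholonomic claim follows by the same reasoning once $H$ is replaced by its restriction to $\mathcal{D}^*$, which is likewise conserved by both $X_H^\mathcal{D}$ and the impact map $\Delta^\mathcal{D}$.
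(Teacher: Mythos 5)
Your proposal is correct and is exactly the argument the paper intends: its one-line proof ``$H^{-1}(e)$ is compact'' is precisely your chain of energy conservation across both the flow and the impacts, properness of a natural $H$ over a compact $Q$, and the resulting contradiction with the trajectory escaping every compact set. You have simply written out the details the paper leaves implicit.
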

\begin{proof}
	This follows from the fact that $H^{-1}(e)$ is compact for $e\in\mathbb{R}$.
\end{proof}

The following example illustrates the necessity of having $Q$ compact
in this proposition. 
%%%%%%%%%%%%%%%%%%%%%%%%%%%%%%%%%%%%%%%%%%%%%%%%%%%%%%%%%%%%%%%%%%%%%%%%
\begin{example}[Elastic Hamiltonian spasmodic]
	Consider the (mathematical) Hamiltonian on $Q=\mathbb{R}^2$,
	\begin{equation*}
		H = p_x\left(1+x^2\right) + \frac{1}{2}p_y^2x.
	\end{equation*}
	The Hamiltonian vector field is
	\begin{equation*}
		X_H = \left(1+x^2\right)\frac{\partial}{\partial x} + p_yx\frac{\partial}{\partial y} - \left(2xp_x + \frac{1}{2}p_y^2\right)\frac{\partial}{\partial p_x}.
	\end{equation*}
	It is clear that if $x(0) = 0$, then $x(t) = \tan(t)$ which escapes to infinity at $t=\pi/2$. This can be used to make a spasmodic Zeno state by choosing $S = \{0,1\}\times\mathbb{R}$ with elastic impact map
	\begin{equation*}
		\Delta(x,y,p_x,p_y) = (x,y,p_x,-p_y).
	\end{equation*}
	This system is volume-preserving and energy-preserving but still has spasmodic Zeno states.
\end{example}
%%%%%%%%%%%%%%%%%%%%%%%%%%%%%%%%%%%%%%%%%%%%%%%%%%%%%%%%%%%%%%%%%%%%%%%%
\subsection{Steady Zeno Trajectories}
Let us assume, henceforth, that any Zeno state will not be spasmodic, i.e. it will be steady.
Any Zeno issues will occur within the set $\mathcal{Z}:= \overline{S}\cap\overline{\Delta(S)}$, which by (H.5) has codimension at least 2 (exactly 2 for mechanical systems). We will therefore focus our attention on trajectories that intersect this set; let $\mathcal{N}$ be all points in $M$ that eventually move to $\mathcal{Z}$,
\begin{equation*}
	\mathcal{N} := \left\{ x\in M: \exists t>0~s.t.~ \lim_{s\to t^-}\, \varphi_s^\mathcal{H}(x)\in\mathcal{Z}\right\}.
\end{equation*}
Our goal is to show that $\mathcal{N}$ has zero measure. A key ingredient in proving this is the following assumption (which holds for mechanical systems).

%%%%%%%%%%%%%%%%%%%%%%%%%%%%%%%%%%%%%%%%%%%%%%%%%%%%%%%%%%%%%%%%%%%%%%%%
\begin{proposition}
	Let $x\in M$ have a steady Zeno trajectory with impact times $\{t_i\}$ and let $d$ be the induced distance metric from a Riemannian metric on $M$. Then,
	\begin{equation*}
		\lim_{i\to \infty} \, d\left( \varphi_{t_i}^\mathcal{H}(x),\mathcal{Z}\right) = 0.
	\end{equation*}
\end{proposition}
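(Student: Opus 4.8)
The plan is to exploit the two defining features of a steady Zeno trajectory. First, since the impact times accumulate, $t_i \to t_\infty < \infty$, the inter-impact durations $\tau_i := t_{i+1}-t_i$ tend to $0$. Second, because the trajectory is \emph{not} spasmodic, the impact points $y_i := \varphi_{t_i}^\mathcal{H}(x) \in S$ do not escape to infinity, so they admit accumulation points in $M$. The post-impact points are $\Delta(y_i) \in \Delta(S)$, and the continuous flow carries $\Delta(y_i)$ to the next pre-impact point $y_{i+1} \in S$ in time exactly $\tau_i$. The whole argument rests on showing that the collapse $\tau_i \to 0$ forces $\Delta(y_i)$ and $y_{i+1}$ to coalesce; since the former lies in $\Delta(S)$ and the latter in $S$, any common limit must lie in $\overline{\Delta(S)} \cap \overline{S} = \mathcal{Z}$.

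Concretely, I would first establish a uniform speed bound along the tail of the trajectory. Steadiness yields a compact set containing infinitely many of the $y_i$; passing to a compact neighbourhood $L$ of this set together with $\Delta(L \cap S)$, a standard short-time estimate shows that a flow segment starting in $L \cap \Delta(S)$ remains inside a slightly larger compact set $L'$ for all times below some fixed $\delta > 0$. As $\tau_i \to 0$, eventually $\tau_i < \delta$, so the tail of the trajectory stays in $L'$, where $\lvert X \rvert \le C$ for some constant $C$. Integrating the speed then gives
\[
d\bigl(\Delta(y_i),\, y_{i+1}\bigr) \;\le\; \int_{t_i}^{t_{i+1}} \lvert X(\varphi_s^\mathcal{H}(x)) \rvert \, ds \;\le\; C\,\tau_i \longrightarrow 0 .
\]

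With this in hand, convergence of the full sequence follows by contradiction. If $d(y_i, \mathcal{Z}) \not\to 0$, there exist $\epsilon > 0$ and a subsequence with $d(y_{i_k+1}, \mathcal{Z}) \ge \epsilon$. By precompactness of the tail we may assume $y_{i_k+1} \to q_* \in \overline{S}$; then $d(\Delta(y_{i_k}), y_{i_k+1}) \to 0$ forces $\Delta(y_{i_k}) \to q_*$ as well, and since each $\Delta(y_{i_k}) \in \Delta(S)$ we obtain $q_* \in \overline{\Delta(S)}$. Hence $q_* \in \overline{S} \cap \overline{\Delta(S)} = \mathcal{Z}$, so $d(q_*, \mathcal{Z}) = 0$, contradicting $d(y_{i_k+1}, \mathcal{Z}) \ge \epsilon$ by continuity of $d(\cdot, \mathcal{Z})$. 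Reindexing then gives $d(\varphi_{t_i}^\mathcal{H}(x), \mathcal{Z}) \to 0$, as claimed.

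The main obstacle is the uniform speed bound: translating ``not spasmodic'' into genuine precompactness of the entire tail of the trajectory, rather than merely of a subsequence of impact points, so that $\lvert X \rvert$ is controlled on all of the short inter-impact segments simultaneously. This is exactly where the finite Zeno time ($\tau_i \to 0$) must be married to the non-spasmodicity hypothesis; once the tail is confined to a single compact set, the remaining steps are soft.
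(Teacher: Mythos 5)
Your argument is correct and follows essentially the same route as the paper's proof: steadiness confines the trajectory to a compact set, a uniform bound $\lvert X\rvert \le C$ there gives $d\bigl(\Delta(y_i), y_{i+1}\bigr) \le C\,\tau_i \to 0$, and since these points lie in $\Delta(S)$ and $S$ respectively, every limit point lands in $\overline{S}\cap\overline{\Delta(S)} = \mathcal{Z}$. You are in fact somewhat more careful than the paper, which simply asserts that non-spasmodicity yields a compact set containing the whole trajectory; the tail-confinement issue you flag as the main obstacle is real but is passed over silently in the original.
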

\begin{proof}
	As $x$ is not spasmodic, there exists a compact set that contains the trajectory. Therefore, we can take $M$ to be compact. Let $g$ be a Riemannian metric on $M$ with induced distance metric $d$. Then, because $M$ is compact, we have
	\begin{equation*}
		\sup_{x\in M} \, \sqrt{ g(X(x),X(x)) } = \delta < \infty.
	\end{equation*}
	If follows that we have the uniform bound:
	\begin{equation*}
		d\left( x, \varphi_t(x) \right) < t\delta, \quad \forall x\in M.
	\end{equation*}
	Since the trajectory is Zeno, successive impact times converge to zero and the inequality above shows that successive impact locations converge, i.e. 
	\begin{equation*}
		\lim_{i\to\infty} \, d(x_i,\varphi_{t_i}(x_i)) = 0.
	\end{equation*}
	Therefore, $x_\infty\in\mathcal{Z}$.
\end{proof}
%%%%%%%%%%%%%%%%%%%%%%%%%%%%%%%%%%%%%%%%%%%%%%%%%%%%%%%%%%%%%%%%%%%%%%%%
\begin{remark}
	When $\mathcal{H} = (TQ,\hat{S},X_L,\Delta)$ is a Lagrangian mechanical hybrid system, the Zeno set is $\mathcal{Z} = TS$. This has the interpretation that Zeno states occur when the impact surface is met tangentially, i.e. the impact is a ``scuff.'' As an impact becomes increasingly tangential, the impulse from the impact goes to zero. This phenomenon is key to ruling out Zeno states as the following assumption formalizes.
\end{remark}
\begin{assumption}[Boundary identity property]\label{ass:boundary}
	Consider a smooth hybrid dynamical system $\mathcal{H}=(M,S,X,\Delta)$. Then for any sequence $\{s_n\}\in S$ such that $s_n\to s\in\mathcal{Z}$, we have $\Delta(s_n)\to s$.
\end{assumption}
This assumption is useful because it allows us to ``complete'' the hybrid flow in a manner similar to \cite{1656623}. Essentially, suppose $x_0$ is Zeno so $\lim_{s\to t^-}\varphi_s^\mathcal{H}(x_0) = z_0\in\mathcal{Z}$. Then we define $\varphi_t^\mathcal{H}(x_0) := z_0$ and we can extend it via assumption (A.1). Let $\varepsilon>0$ such that $\varphi_t(z)$ does not intersect $S$ for all $t\in(0,\varepsilon)$. We define the \textit{completed} flow to be
$\varphi_{t+\delta}^\mathcal{H}(x_0) = \varphi_\delta\left(z_0\right)$.
If a hybrid flow is measure-preserving then its associated completed flow is too precisely due to the boundary identity property; we are ignoring any impacts at $z_0$ which comes from continuously extending the impact map from $S$ to $\overline{S}$. We can now state the following theorem.
\begin{theorem}\label{thm:noZeno}
	Suppose that $\mathcal{H} = (M,S,X,\Delta)$ is a smooth compact hybrid dynamical system (cf. Definition \ref{def:smooth}) with the boundary identity property, Assumption \ref{ass:boundary}. If $\mathcal{H}$ preserves a smooth measure $\mu$, then $\mu(\mathcal{N})=0$.
\end{theorem}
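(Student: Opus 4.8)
The plan is to reduce the statement to two facts: first, that the \emph{completed} flow $\varphi^\mathcal{H}_t$ is measure-preserving (as noted in the remark preceding the theorem, this is exactly what the boundary identity property buys us), and second, that the forward flow-out of the codimension-$\geq 2$ set $\mathcal{Z}$ over a bounded time interval is $\mu$-null. Throughout I use that $\mu$ is a smooth measure, so it is absolutely continuous with respect to Lebesgue measure in every chart and therefore annihilates every set that is a locally Lipschitz image of a manifold of dimension strictly less than $n=\dim M$.

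First I would discretize time to turn the uncountable union implicit in the definition of $\mathcal{N}$ into a countable one. Set
\[
	B := \bigcup_{s\in[0,1]} \varphi^\mathcal{H}_s(\mathcal{Z}).
\]
If $x\in\mathcal{N}$, then by definition of the completed flow there is $t>0$ with $\varphi^\mathcal{H}_t(x)=z\in\mathcal{Z}$; writing $m=\lceil t\rceil\geq 1$ and using the semigroup property gives $\varphi^\mathcal{H}_m(x)=\varphi^\mathcal{H}_{m-t}(z)\in B$ since $m-t\in[0,1)$. Hence
\[
	\mathcal{N}\subseteq \bigcup_{m=1}^\infty \left(\varphi^\mathcal{H}_m\right)^{-1}(B).
\]
Because $\varphi^\mathcal{H}_m$ preserves $\mu$, each term satisfies $\mu\!\left(\left(\varphi^\mathcal{H}_m\right)^{-1}(B)\right)=\mu(B)$, so by countable subadditivity it suffices to prove $\mu(B)=0$. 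Note that only \emph{forward} images of $\mathcal{Z}$ appear here, so no inverse of $\Delta$ is needed and measure-preservation is used only through preimages.

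It then remains to show $\mu(B)=0$, which I would do by a dimension count organized by the number of impacts. The impact-free part is contained in the continuous-flow orbit $\bigcup_{s\in[0,1]}\varphi_s(\mathcal{Z})$, the image of $[0,1]\times\mathcal{Z}$ under a smooth map, hence of dimension at most $1+(n-2)=n-1$ and $\mu$-null. For the part with impacts I would argue inductively: transversality (A.2) forces each flow-out to meet $S$ in a set of dimension at most $n-2$, the smooth map $\Delta$ cannot raise this dimension, and flowing forward again over $[0,1]$ raises it by at most one, back to $n-1$. Thus the locus of points reached after exactly $k$ impacts is a locally Lipschitz image of a manifold of dimension at most $n-1$, hence $\mu$-null, and summing over the countably many values of $k$ keeps $B$ null.

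The hard part is the bookkeeping of impacts inside $B$: \emph{a priori} a trajectory issuing from $\mathcal{Z}$ could itself accumulate infinitely many impacts within time $1$, so the decomposition of $B$ by impact number need not exhaust $B$, and the map $(s,z)\mapsto\varphi^\mathcal{H}_s(z)$ need not be globally Lipschitz. This is precisely where compactness and the boundary identity property (Assumption \ref{ass:boundary}) enter: compactness of $M$ yields a uniform speed bound $\sqrt{g(X,X)}\leq\delta$ as in the preceding proposition, while the boundary identity property makes impacts near $\mathcal{Z}$ asymptotically trivial, so the completed flow extends continuously across $\mathcal{Z}$ and the residual (Zeno) part of the flow-out is confined to a further continuous image of $\mathcal{Z}$, again of dimension at most $n-1$. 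Controlling this residual set uniformly, rather than trajectory-by-trajectory, is the main obstacle and the step I would spend the most care on; once it is shown null, $\mu(B)=0$ and the theorem follows.
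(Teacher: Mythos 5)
Your reduction of $\mu(\mathcal{N})=0$ to $\mu(B)=0$ via integer-time preimages is reasonable, but the proof that $\mu(B)=0$ has a genuine gap, and it is exactly the one you flag yourself. The stratification of $B=\bigcup_{s\in[0,1]}\varphi^{\mathcal{H}}_s(\mathcal{Z})$ by impact count need not exhaust $B$: a trajectory issuing from $\mathcal{Z}$ can itself accumulate infinitely many impacts within time $1$ (excluding such behavior pointwise is precisely what the theorem cannot do), and your claim that the residual Zeno part of the flow-out is ``confined to a further continuous image of $\mathcal{Z}$'' is asserted, not argued --- even granting it, the accumulation point lands back in $\mathcal{Z}$, so you would have to restart the flow-out from there with no guarantee the iteration terminates or that the union of residuals stays null. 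Even in the finite-impact strata, the claim that the flow-out meets $S$ in a set of dimension at most $n-2$ requires a transversality statement about the flow-out (a countable union of images of $(n-1)$-dimensional sets, not a priori a manifold), not just condition (A.2) about $X$. So the step you defer is not bookkeeping; it is the crux of the theorem.

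The paper's proof closes this by never flowing $\mathcal{Z}$ for a fixed unit of time at all. It partitions $\mathcal{Z}$ into compact pieces $V_\alpha$ and uses (A.1) together with compactness of $V_\alpha$ to extract a \emph{uniform} $\delta>0$ such that trajectories launched from $V_\alpha$ undergo no impacts for $t\in(0,\delta)$. It then covers $\mathcal{N}$ by the sets $\mathcal{N}_{\alpha,\delta/4}$ of points reaching $V_\alpha$ within time $\delta/4$, pushes each forward by time $\delta/2$ under the measure-preserving flow, and lands inside the purely continuous flow-out $\mathcal{O}(V_\alpha,\delta)=\bigcup_{t\in(0,\delta)}\varphi_t(V_\alpha)$, a smooth image of $(0,\delta)\times V_\alpha$ of codimension at least $1$, hence null. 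No impact counting in the flow-out is ever needed. If you replace your fixed window $[0,1]$ by this uniform impact-free window (covering $\mathcal{N}$ by preimages at the denser set of times $k\delta/4$ and handling each $V_\alpha$ separately), your argument goes through and essentially becomes the paper's.
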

\begin{proof}
	The assumption that $\mathcal{H}$ be compact is to disallow spasmodic Zeno states.
	Partition $\mathcal{Z}$ into a countable collection of compact sets, $\{V_\alpha\}$, and partition $\mathcal{N}$ in the following way:
	\begin{equation*}
		\mathcal{N}_{\alpha,\delta} = \left\{ x\in\mathcal{N} : \exists t\in(0,\delta) \; s.t. \; \varphi_t^\mathcal{H}(x)\in V_\alpha\right\}.
	\end{equation*}
	It follows that if each $\mathcal{N}_{\alpha,\delta}$ has zero measure then all of $\mathcal{N}$ has zero measure, since a countable union of null sets is still a null set. In particular, we only need to prove that for all $\alpha$, there exists $\delta$ such that $\mathcal{N}_{\alpha,\delta}$ has zero measure. This is because for $\delta > s$
	\begin{equation*}
		\varphi_s^\mathcal{H}\left( \mathcal{N}_{\alpha,\delta}\setminus \mathcal{N}_{\alpha,s}\right) = \mathcal{N}_{\alpha,\delta-s}.
	\end{equation*}
	Fix an $\alpha$. By (A.1), for each $z\in V_\alpha$, there exists $\varepsilon>0$ such that $\varphi_t(z)\not\in S$ for all $t\in (0,\varepsilon)$. Let $\delta$ be the infimum of all such $\varepsilon$ which is positive due to the compactness of $V_\alpha$. By the measure-preserving property of the flow, we get
	\begin{equation*}
		\mu\left(\mathcal{N}_{\alpha,\delta/4} \right) =
		\mu\left( \varphi^\mathcal{H}_{\delta/2}\left(\mathcal{N}_{\alpha,\delta/4}\right) \right) \leq \mu\left( \mathcal{O}(V_\alpha,\delta)\right),
	\end{equation*}
	where 
	\begin{equation*}
		\mathcal{O}(V_\alpha,\delta) = \bigcup_{t\in (0,\delta)} \, \varphi_t(V_\alpha).
	\end{equation*}
	Because zero impacts occur, the set $\mathcal{O}(V_\alpha,\delta)$ is a manifold with codimension at least 1 which necessarily has zero measure.
\end{proof}
%%%%%%%%%%%%%%%%%%%%%%%%%%%%%%%%%%%%%%%%%%%%%%%%%%%%%%%%%%%%%%%%%%%%%%%%
In order for a hybrid dynamical system to be volume-preserving, its divergence must be zero \textit{and} its hybrid Jacobian must be one. However, if a trajectory is Zeno, the continuous component of the flow is finite while the impact component is infinite. This seems to suggest that the hybrid Jacobian controls Zeno states much more than the divergence does.
\begin{theorem}\label{th:zeno_impacts}
	Let $\mathcal{H}=(M,S,X,\Delta)$ be a smooth compact hybrid dynamical system with the boundary identity property and let $\mu\in\Omega^n(M)$ be a volume form. If $\mathcal{J}_\mu(\Delta)= 1$ everywhere on $S$, then $\tilde{\mathcal{Z}}$ has measure zero.
\end{theorem}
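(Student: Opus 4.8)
The plan is to transfer the problem from the ambient volume $\mu$ on $M$ to the \emph{flux measure} $\sigma := |\iota_S^* i_X\mu|$ on the impact surface $S$, show that the set $Z_S\subset S$ of impact points lying on Zeno trajectories is $\sigma$-null, and then pull this back along the flow to conclude $\mu(\tilde{\mathcal Z})=0$. Since $M$ is compact, $\overline S$ is compact and $i_X\mu$ is smooth, so $\sigma(S)<\infty$ and we work on a finite measure space. Two transport facts drive the argument. First, the hypothesis $\mathcal J_\mu(\Delta)=1$ is exactly the statement $\Delta^*\iota_{\tilde S}^* i_X\mu = \iota_S^* i_X\mu$, so the impact map $\Delta$ \emph{preserves} $\sigma$. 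Second, from $\mathcal L_X(i_X\mu)=\mathrm{div}_\mu(X)\,i_X\mu$ one gets that the flow map $\Phi$ carrying one cross-section of $X$ to the next transports $\sigma$ with Radon--Nikodym factor $\exp\!\big(\int_0^{\tau}\mathrm{div}_\mu(X)\,ds\big)$, where $\tau$ is the transition time. Hence the (partially defined) return map $R:=\Phi\circ\Delta$ on $S$ (impact, then flow to the next impact) distorts $\sigma$ only through its continuous part.

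Here is the key estimate, and the reason the divergence is harmless. Let $C:=\sup_M|\mathrm{div}_\mu(X)|<\infty$, finite by compactness. For a steady Zeno orbit the impact points are $s_0,s_1,\dots$ with $s_{i+1}=R(s_i)$, transition times $\tau_i$, and \emph{finite} total flow time $T=\sum_i\tau_i=t_\infty-t_0$. Since $\Delta$ contributes no distortion, the flux-Jacobian of $R^n$ along such an orbit equals $\exp\!\big(\int_0^{T_n}\mathrm{div}_\mu(X)\,ds\big)$ with $T_n=\sum_{i<n}\tau_i\le T$, hence is bounded \emph{uniformly in $n$} by $e^{CT}$ from above and $e^{-CT}$ from below. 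This is the precise sense in which, for Zeno trajectories, only the impact Jacobian matters: acting over a finite total time, the continuous flow can distort $\sigma$ by no more than a bounded multiplicative constant.

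With this I run a Poincar\'e-recurrence argument. Stratify $Z_S=\bigcup_{T\in\mathbb N}Z_S^{\,T}$ by total Zeno time $\le T$ and fix $T$; on $Z_S^{\,T}$ every iterate $R^n$ has flux-Jacobian in $[e^{-CT},e^{CT}]$. Suppose $\sigma(Z_S^{\,T})>0$; then there is a compact $K\subset S$ with $\sigma(W)>0$, where $W:=Z_S^{\,T}\cap K$. By the preceding proposition the impact points converge to $\mathcal Z=\overline S\setminus S$, which is disjoint from the compact $K$, so each Zeno orbit meets $K$ (hence $W$) only finitely often. On the other hand $R$ is a diffeomorphism onto its image (built from the flow and the embedding $\Delta:S\to\tilde S$), so for the never-return set $V:=\{s\in W:R^n(s)\notin W\ \forall n\ge1\}$ the forward images $V,R(V),R^2(V),\dots$ are pairwise disjoint, and the uniform lower bound gives $\sigma(S)\ge\sum_{n\ge0}\sigma(R^n(V))\ge\sum_{n\ge0}e^{-CT}\sigma(V)$, forcing $\sigma(V)=0$. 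Since every point of $W$ returns only finitely often, the last-return decomposition yields $W\subset\bigcup_{n}R^{-n}(V)$, and as the diffeomorphism $R$ preserves $\sigma$-null sets this gives $\sigma(W)=0$, a contradiction. Hence $\sigma(Z_S^{\,T})=0$ for every $T$, so $\sigma(Z_S)=0$.

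Finally I transfer back. The set $\tilde{\mathcal Z}$ is contained in the backward flow-out of $Z_S$, since every Zeno initial condition flows in finite time to its first, necessarily Zeno, impact point; and the flow-out of a $\sigma$-null subset of the transverse section $S$ is $\mu$-null, because in flow-box coordinates $\mu$ disintegrates as $dt\wedge\iota_S^* i_X\mu$ along $S$. Therefore $\mu(\tilde{\mathcal Z})=0$. The boundary identity property (Assumption \ref{ass:boundary}) enters to guarantee that $R$ extends continuously across $\mathcal Z$, so that the completed flow is a genuine non-singular system and the accumulation at $\mathcal Z$ is controlled. The main obstacle is precisely the recurrence step: here $\sigma$ is \emph{not} invariant under $R$, and one must exploit the finite-Zeno-time bound $e^{CT}$ to rescue the disjointness-plus-finiteness contradiction that would be immediate in the measure-preserving case.
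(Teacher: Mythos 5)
Your proof is correct, but it takes a genuinely different route from the paper's, so a comparison is in order. The paper's proof stays on the ambient manifold $M$: it reuses the construction of Theorem \ref{thm:noZeno}, pushing the set $\mathcal{N}_{\alpha,\delta/4}$ of soon-to-be-Zeno points forward through the (completed) hybrid flow into the tube $\mathcal{O}(V_\alpha,\delta)=\bigcup_{t\in(0,\delta)}\varphi_t(V_\alpha)$, which is null because $V_\alpha\subset\mathcal{Z}$ has codimension at least $2$; the only new ingredient beyond Theorem \ref{thm:noZeno} is that, since $\mathrm{div}_\mu(X)$ need not vanish, one invokes Gr\"onwall to see that the finite-time hybrid flow (with $\mathcal{J}_\mu(\Delta)=1$ killing the contribution of the infinitely many impacts) distorts $\mu$ only by a bounded factor, so null sets remain null. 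You instead descend to the impact surface, equip it with the flux measure $\sigma=|\iota_S^*i_X\mu|$, observe that $\mathcal{J}_\mu(\Delta)=1$ is precisely $\Delta$-invariance of $\sigma$ while section-to-section flow maps distort $\sigma$ by $\exp\bigl(\int\mathrm{div}_\mu(X)\bigr)$, and run a quantitative Poincar\'e-recurrence argument for the return map $R=\Phi\circ\Delta$ on the stratum of Zeno impact points with total remaining time at most $T$, where all iterates $R^n$ have Jacobians uniformly pinched in $[e^{-CT},e^{CT}]$. The key insight --- a Zeno orbit has finite total continuous time, so the divergence contributes only a bounded multiplicative error and the impact Jacobian is what truly matters --- is the same in both proofs, but your packaging is sharper: it localizes the obstruction on $S$ and turns the ``only the impacts matter'' slogan into a literal statement about the return map, whereas the paper hides it inside a Gr\"onwall bound on the ambient flow. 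Two points to tighten: the disjointness-of-forward-images step needs $R$ (hence $\Delta$) to be injective, which (H.1)--(H.5) do not guarantee (though $\mathcal{J}_\mu(\Delta)=1$ makes $\Delta$ a local diffeomorphism, and mechanical impact maps are injective); and $\mathcal{Z}=\overline{S}\cap\overline{\Delta(S)}$ need not equal $\overline{S}\setminus S$ in general, so the compact $K$ should be chosen inside $S\setminus\mathcal{Z}$, which is harmless since $\mathcal{Z}\cap S$ is $\sigma$-null. Neither issue is specific to your approach --- the paper glosses over comparable points --- and neither affects validity in the mechanical setting the theorem targets.
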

\begin{proof}
	This follows via the same construction as in Theorem \ref{thm:noZeno} along with the following bounds on $\mu$. Notice that $\mu_t := \left(\varphi_t^\mathcal{H}\right)^*\mu$ satisfies the following differential equation:
	\begin{equation*}
		\dot{\mu}_t = \mathrm{div}_\mu(X)\cdot \mu_t,
	\end{equation*}
	(notice that nothing occurs at impacts as $\mathcal{J}_\mu(X)=1$).
	Let $M:=\max \mathrm{div}_\mu(X)$ and $m = \min \mathrm{div}_\mu(X)$. Then Gr\"{o}nwall's inequality states that for any finite-time, the translated volume will be equivalent to the original volume. Therefore the set $\mathcal{O}(V_\alpha,\delta)$ will still be a null set.
\end{proof}
\begin{corollary}
	Nonholonomic mechanical systems have almost no Zeno states.
\end{corollary}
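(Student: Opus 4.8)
The plan is to check that a nonholonomic mechanical impact system $\mathcal{H}^\mathcal{D} = (\mathcal{D}^*, S^*_\mathcal{D}, X_H^\mathcal{D}, \Delta^\mathcal{D})$ meets every hypothesis of Theorem \ref{th:zeno_impacts}, whose conclusion is exactly that the Zeno-producing set $\mathcal{N}$ is null. The decisive hypothesis, $\mathcal{J}_\mu(\Delta)=1$, is already supplied by Theorem \ref{th:nonh_hybrid_jac}, which gives $\mathcal{J}_{\mu_\mathscr{C}}(\Delta^\mathcal{D})=1$. It is worth stressing that we invoke Theorem \ref{th:zeno_impacts} rather than Theorem \ref{thm:noZeno}: a nonholonomic system need not be volume-preserving, so $\mathrm{div}_{\mu_\mathscr{C}}(X_H^\mathcal{D})$ may be nonzero, but this divergence only produces a bounded, Gr\"{o}nwall-controlled distortion over the finite continuous portion of a Zeno trajectory, whereas the impact part is governed entirely by the hybrid Jacobian. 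Thus the remaining work is to establish smoothness, compactness, and the boundary identity property.

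Smoothness follows as in the regularity subsection: the argument proving that unconstrained Hamiltonian impact systems are smooth transfers directly, since the nontrivial impact condition $dh|_S \notin \mathcal{D}^0|_S$ guarantees that $X_H^\mathcal{D}$ is transverse to $S^*_\mathcal{D}$, which is what (A.1) and (A.2) require. Compactness is not automatic on $\mathcal{D}^*$, so I would first restrict to a regular energy level: since $H$ is natural and is preserved by both the continuous flow and the impacts, it is a proper Lyapunov function. When $Q$ is compact each $H^{-1}(e)\cap\mathcal{D}^*$ is a compact invariant manifold, and the spasmodic-Zeno proposition rules out the only form of Zeno that escapes compact sets, so it suffices to run Theorem \ref{th:zeno_impacts} on these compact energy levels.

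The boundary identity property is read off the explicit impact formula \eqref{eq:restricted_NH_form}--\eqref{eq:restricted_NH_impact}. For a Lagrangian impact system the Zeno set is $\mathcal{Z}=TS$, i.e.\ the tangential states where $dh(\dot{q})=0$; both multipliers $\varepsilon$ and $\lambda_k$ in \eqref{eq:restricted_NH_impact} carry an overall factor $dh(\dot{q})$, while their common denominator $dh(\nabla h) - m_{\alpha\beta}\,dh(W^\alpha)dh(W^\beta)$ stays bounded away from zero near $\mathcal{Z}$ by the same transversality that underlies smoothness. Consequently $\delta(q,\dot{q}) = \dot{q} + \lambda_k W^k + \varepsilon\nabla h \to \dot{q}$ as the state approaches $\mathcal{Z}$, so any sequence $s_n\to s\in\mathcal{Z}$ satisfies $\Delta^\mathcal{D}(s_n)\to s$, which is Assumption \ref{ass:boundary}.

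With all hypotheses verified, Theorem \ref{th:zeno_impacts} yields $\mu_\mathscr{C}(\mathcal{N})=0$ on each compact energy level, so steady Zeno states occur almost nowhere, and combined with the exclusion of spasmodic Zeno this gives the corollary. The main obstacle I expect is the uniform lower bound on the impact denominator near $\mathcal{Z}$: one must confirm that the vanishing of the impulse is caused solely by the numerator factor $dh(\dot{q})$ and that no cancellation in $dh(\nabla h) - m_{\alpha\beta}\,dh(W^\alpha)dh(W^\beta)$ can create an unbounded reset near the tangential set, and it is precisely the nontrivial impact condition together with transversality to $S$ that secures this.
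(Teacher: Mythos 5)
Your proposal is correct and follows exactly the route the paper intends: the corollary is an immediate application of Theorem \ref{th:zeno_impacts}, with the key hypothesis $\mathcal{J}_{\mu_{\mathscr{C}}}(\Delta^{\mathcal{D}})=1$ supplied by Theorem \ref{th:nonh_hybrid_jac} and the boundary identity property coming from the vanishing of the impact impulse near the tangential set $\mathcal{Z}$. Your verification of the remaining hypotheses (transversality for smoothness, compact energy levels via the proper conserved Hamiltonian, and the uniform positivity of the denominator $dh(\nabla h)-m_{\alpha\beta}\,dh(W^\alpha)dh(W^\beta)=\lVert \pi_{\mathcal{D}}\nabla h\rVert^2$ from the nontrivial impact condition) is more explicit than the paper's, which leaves these points as standing assertions, but the argument is the same.
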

Even though nonholonomic systems can experience dissipation during the continuous phase, there is never dissipation occurring during the moment of impact. Therefore, a trajectory in any mechanical system (with elastic impacts) will almost never be Zeno. We will use this to justify that our ignorance of Zeno states is essentially benign.
%%%%%%%%%%%%%%%%%%%%%%%%%%%%%%%%%%%%%%%%%%%%%%%%%%%%%%%%%%%%%%%%%%%%%%%%
\subsection{Steady Zeno in an Elastic System}
It is important to notice that Theorem \ref{thm:noZeno} states that Zeno \textit{almost never} happens. Below, we present an example of an elastic impact system which does possess a Zeno trajectory.

We start with the observation that the inelastic bouncing ball is Zeno. Consider the example with continuous dynamics
\begin{equation*}
	\ddot{x} = 0, \quad \ddot{y} = -1,
\end{equation*}
which is the usual falling particle in the plane. Suppose that an impact occurs when $y=0$ and the reset is described via
\begin{equation*}
	\Delta(x,y,\dot{x},\dot{y}) = (x,y,\dot{x},-\alpha\dot{y}),
\end{equation*}
where $0\leq\alpha\leq 1$ is the coefficient of restitution. 

Let $\alpha<1$. The resulting dynamics are Zeno by the following: Suppose that we have the initial conditions $y(0)=0$ and $\dot{y}(0) = v_0>0$. The time of the first impact is given by $t_1 = 2v_0$. Consequently, $t_2-t_1 = 2v_1 = 2\alpha v_0$. The trajectory is Zeno as the limit converges,
\begin{equation*}
	t_{Zeno} = \lim_{k\to\infty} \, t_k = \lim_{k\to\infty} \, \sum_{j=1}^k \, \Delta t_j = \lim_{k\to\infty} \, \sum_{j=1}^k \, 2v_0\alpha^j = \frac{2v_0}{1-\alpha} < \infty.
\end{equation*}
Each jump in the inelastic bouncing ball is shorter than the previous which results in Zeno. For elastic bouncing, all jumps must remain the same height. As such, we will artificially shrink the bounces by raising the table.

Consider the curve
\begin{equation}\label{eq:Zeno_curve}
	y = \alpha^2x\left(\sqrt{2}v_0-\alpha^2x\right),
\end{equation}
for some parameters $v_0>0$ and $0<\alpha<1$. Notice that $y(0)=0$ and its maximum occurs at
\begin{equation*}
	y_{max} = y\left( \frac{v_0}{\sqrt{2}\alpha^2}\right) = \frac{1}{2}v_0^2.
\end{equation*}
Consider the hybrid system with continuous dynamics
\begin{equation*}
	\ddot{x} = 0,\quad \ddot{y} = -1,
\end{equation*}
with impact occurring when \eqref{eq:Zeno_curve} is satisfied with reset
\begin{equation*}
	\begin{array}{ll}
		x \mapsto x, & \dot{x}\mapsto \dot{x} \\
		y\mapsto y, & \dot{y}\mapsto -\dot{y}.
	\end{array}
\end{equation*}
\begin{proposition}
	The trajectory of the above hybrid system with the initial conditions
	\begin{equation*}
		\begin{array}{ll}
			x(0) = 0, & \dot{x}(0) = 1, \\
			y(0) = 0, & \dot{y}(0) = v_0,
		\end{array}
	\end{equation*}
	has a steady Zeno trajectory and the Zeno points happens at
	\begin{equation}\label{eq:Zeno_point}
		x\left( \frac{v_0}{\sqrt{2}\alpha^2}\right) = \frac{v_0}{\sqrt{2}\alpha^2}, \quad y\left(\frac{v_0}{\sqrt{2}\alpha^2}\right) = \frac{v_0^2}{2}.
	\end{equation}
\end{proposition}
\begin{theorem}
	Let $(x_n,y_n)$ be the location of the $n^{th}$ impact for the above Zeno trajectory. Choose a function $h$ such that $y_n = h(x_n)$ and $h'(x_n)=0$ for all $n$. Then $h\not\in C^3$.
\end{theorem}
\begin{proof}
	We will compute $\mathrm{Var}(h'')$ and show that it is not finite. Let $(x_Z,y_Z)$ be the Zeno point \eqref{eq:Zeno_point}. In particular,
	\begin{equation*}
		\lim_{n\to\infty}\, x_n = x_Z, \quad \lim_{n\to\infty} \, y_n = y_Z.
	\end{equation*}
	By the mean value theorem, there exists a point $c\in(x_n,x_{n+1})$ such that
	\begin{equation*}
		h'(c) = \frac{y_{n+1}-y_n}{x_{n+1}-x_n}.
	\end{equation*}
	Applying the mean value theorem a second time, there exists a point $\tilde{c}\in (x_n,c)$ such that
	\begin{equation*}
		h''(\tilde{c}) = \frac{h'(c)-h'(x_n)}{c-x_n} \geq \frac{y_{n+1}-y_n}{(x_{n+1}-x_n)^2}.
	\end{equation*}
	Likewise, there exists some $\bar{c}\in (c,x_{n+1})$ such that
	\begin{equation*}
		h''(\bar{c}) \leq \frac{y_n-y_{n+1}}{(x_{n+1}-x_n)^2}.
	\end{equation*}
	Combining both of these estimates, we see that
	\begin{equation*}
		\mathrm{Var}(h'') \geq 2 \sum_{n=0}^\infty \, \frac{y_{n+1}-y_n}{(x_{n+1}-x_n)^2}.
	\end{equation*}
	The result follows as long as the sum is divergent. Let $(x_n,y_n)$ be an arbitrary impact location. The vertical velocity (post-impact) is
	\begin{equation*}
		v_n = \sqrt{v_0^2 - 2y_n}.
	\end{equation*}
	Therefore, the next impact occurs when
	\begin{equation*}
		-\frac{1}{2}\left(x_{n+1}-x_n\right)^2 + \sqrt{v_0^2-2y_n}(x_{n+1}-x_n) + y_n = \alpha^2x_{n+1}(\sqrt{2}v_0-\alpha^2x_{n+1}),
	\end{equation*}
	where the left side is the vertical trajectory, $y_{n+1}$, as $x=t$ and the right side is the impact location. Solving for $x_{n+1}$, 
	\begin{equation*}
		x_{n+1} = -x_n - \frac{2}{2\alpha^4-1}\left( x_n + \sqrt{v_0^2-2\alpha^2x_n(\sqrt{2}v_0-\alpha^2x_n)}-\alpha^2\sqrt{2}v_0\right).
	\end{equation*}
	This produces
	\begin{equation*}
		\lim_{x_n\to x_Z} \,\frac{y_{n+1}-y_n}{(x_{n+1}-x_n)^2} = \frac{\alpha^2}{\sqrt{2}}\ne 0.
	\end{equation*}
	Therefore, the sum is divergent and $\mathrm{Var}(h'')=\infty$.
\end{proof}
\begin{figure}
	\centering
	\includegraphics[scale=0.75]{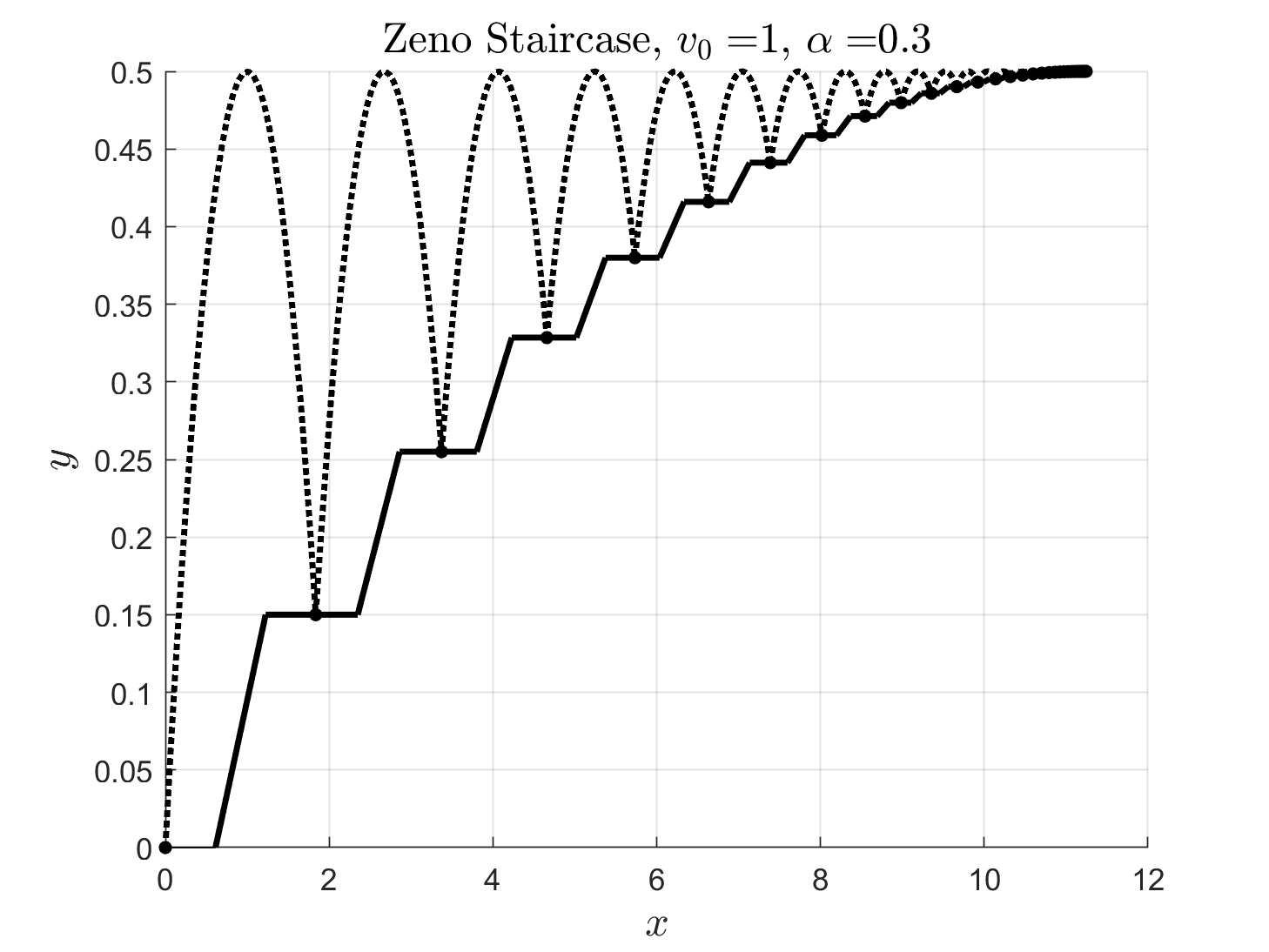}
	\caption{A steady Zeno trajectory in an elastic mechanical system.}
\end{figure}
This theorem prompts the following conjecture.
\begin{conjecture}
	Let $(T^*Q,S^*,X_H,\Delta)$ be a natural and elastic Hamiltonian system. If $S\in C^3$, then there are no Zeno trajectories.
\end{conjecture}
%%%%%%%%%%%%%%%%%%%%%%%%%%%%%%%%%%%%%%%%%%%%%%%%%%%%%%%%%%%%%%%%%%%%%%%%
%%%%%%%%%%%%%%%%%%%%%%%%%%%%%%%%%%%%%%%%%%%%%%%%%%%%%%%%%%%%%%%%%%%%%%%%
%%%%%%%%%%%%%%%%%%%%%%%%%%%%%%%%%%%%%%%%%%%%%%%%%%%%%%%%%%%%%%%%%%%%%%%%
%%%%%%%%%%%%%%%%%%%%%%%%%%%%%%%%%%%%%%%%%%%%%%%%%%%%%%%%%%%%%%%%%%%%%%%%
\section{Filippov Systems}\label{sec:filippov}
Throughout this work, it was implicitly assumed that the reset map was not the identity. In this section, we draw attention to the special case where the reset is the identity but the vector field is discontinuous. Systems of this form appear in e.g. nonsmooth stabilization \cite{blockdrakunov}.
 
Filippov systems are continuous-time dynamical systems where the vector field is discontinuous. Let $M$ be a smooth manifold, $h:M\to\mathbb{R}$ be a smooth function with zero as a regular value, and $f,g\in\mathfrak{X}(M)$ be two smooth vector fields. We will call 
\begin{equation}\label{eq:filippov}
	(f,g)_h(x) := \begin{cases}
		f(x), & h(x) > 0, \\
		g(x), & h(x) < 0.
	\end{cases}
\end{equation}
For an in-depth review of these systems see, e.g. \cite{filippov}.

We answer whether or not \eqref{eq:filippov} preserves a volume-form, which was studied in \cite{novaesvarao}. We present the following theorem which addresses whether or not a volume is preserved under a Filippov system and note that we get the same result as in \cite{novaesvarao}.
\begin{theorem}
	Let $\mathrm{Id}_h^{f,g}$ denote the augmented differential for \eqref{eq:filippov}. Then,
	\begin{equation}\label{eq:filippov_differential}
		\mathrm{Id}_h^{f,g} = \mathrm{Id} + \frac{1}{dh(f)}\cdot\left(g-f\right)\otimes dh.
	\end{equation}
	In particular, if $\mu\in\Omega^{\dim M}(M)$ is a volume-form, the hybrid Jacobian is
	\begin{equation*}
		\mathcal{J}_\mu\left( \mathrm{Id}\right) = \det\left( \mathrm{Id}_h^{f,g}\right) = \frac{dh(g)}{dh(f)}.
	\end{equation*}
\end{theorem}
\begin{proof}
	The augmented differential must satisfy
	\begin{equation*}
		\begin{cases}
			\mathrm{Id}_h^{f,g}\cdot v = v, & dh(v) = 0, \\
			\mathrm{Id}_h^{f,g}\cdot f = g,
		\end{cases}
	\end{equation*}
	where we tacitly assume that $dh(f)\ne 0$. It can be seen that \eqref{eq:filippov_differential} satisfies this. The hybrid Jacobian follows from the matrix determinant lemma.
\end{proof}
\begin{corollary}
	Let $\mu\in\Omega^{\dim M}(M)$ be a volume form and define the (discontinuous) density by 
	\begin{equation*}
		\rho(x) = \begin{cases}
			\alpha^+(x), & h(x) > 0 \\
			\alpha^-(x), & h(x) < 0.
		\end{cases}
	\end{equation*}
	Then $\rho\mu$ is invariant if and only if
	\begin{equation*}
		\begin{split}
			\mathcal{L}_f\alpha^+ + \alpha^+\mathrm{div}_\mu(f) &= 0, \quad h(x) > 0, \\
			\mathcal{L}_g\alpha^- + \alpha^-\mathrm{div}_\mu(g) &= 0, \quad h(x) < 0,
		\end{split}
	\end{equation*}
	and $\alpha^+(x)dh_x(f(x)) = \alpha^-(x)dh_x(g(x))$ for all $x\in h^{-1}(0)$.
\end{corollary}
%\begin{center}
%	\textcolor{blue}{Does this look akin to the Rankine-Hugoniot conditions?}\textcolor{magenta}{It certainly does and is like it. 
%	Could be a brief remark: note the similarity to the....}
%\end{center}
\begin{remark}
	Note the similarity between the discontinuous invariant volume condition and the Rankine-Hugoniot jump conditions for a shock wave.
\end{remark}
%%%%%%%%%%%%%%%%%%%%%%%%%%%%%%%%%%%%%%%%%%%%%%%%%%%%%%%%%%%%%%%%%%%%%%%%
%%%%%%%%%%%%%%%%%%%%%%%%%%%%%%%%%%%%%%%%%%%%%%%%%%%%%%%%%%%%%%%%%%%%%%%%
%%%%%%%%%%%%%%%%%%%%%%%%%%%%%%%%%%%%%%%%%%%%%%%%%%%%%%%%%%%%%%%%%%%%%%%%
%%%%%%%%%%%%%%%%%%%%%%%%%%%%%%%%%%%%%%%%%%%%%%%%%%%%%%%%%%%%%%%%%%%%%%%%
\section{Examples}\label{sec:examples}
We present three examples of nonholonomic mechanical systems: the Chaplygin sleigh, the (uniform) rolling ball, and the vertical rolling disk.
\subsection{The Chaplygin sleigh}
The Chaplygin sleigh is a nonholonomic system with configuration space, $Q=\mathrm{SE}_2$, the special Euclidean group and has the following Lagrangian.
\begin{equation*}
	L = \frac{1}{2}\left( m\dot{x}^2 + m\dot{y}^2 + (I+ma^2)\dot{\theta}^2 - 2ma\dot{x}\dot\theta\sin\theta + 2ma\dot{y}\dot{\theta}\cos\theta\right),
\end{equation*}
where $m$ is the mass of the sleigh, $I$ is its rotational moment of inertia, and $a$ is the distance from the center of mass to the contact point (cf. \S 1.7 in \cite{bloch2008nonholonomic}). The constraint is that the sleigh can only slide in the direction in which it is pointed which is described by
\begin{equation*}
	\dot{y}\cos\theta - \dot{x}\sin\theta = 0.
\end{equation*}
It was shown in \cite{nhvolume} that the Chaplygin sleigh has no invariant volumes with density depending only on configuration. Therefore, we do not get an immediate hybrid-invariant volume. In \cite{clarkthesis} it is shown that $p_\theta^{-3}$ is an invariant density for the sleigh. However, this density is \textit{not} preserved across arbitrary impacts. Therefore, \textit{no smooth hybrid-invariant measures exist for the Chaplygin sleigh}.
%%%%%%%%%%%%%%%%%%%%%%%%%%%%%%%%%%%%%%%%%%%%%%%%%%%%%%%%%%%%%%%%%%%%%%%%
\subsubsection{Statistical distribution of the sleigh}
To augment the discussion on volume-preservation for the Chaplygin sleigh, we present some numerical calculations for the statistical distribution of its trajectories. For the choice of the impact set $S$, we chose an elliptical billiard table:
\begin{equation*}
	\begin{split}
	E &= \left\{ (x,y)\in\mathbb{R}^2 : \frac{x^2}{a^2} + \frac{y^2}{b^2} \leq 1\right\}, \\
	S &= \left\{ (x,y,\theta) \in \mathrm{SE}_2: (x\pm L\cos\theta,y\pm L\sin\theta)\in\partial E\right\},
	\end{split}
\end{equation*}
where $L$ is the length of the sleigh and the $\pm$ in the definition of $S$ corresponds to the front or the back of the sleigh striking the wall. The full state of the system lies in the set $\mathcal{D}^*\subset T^*\mathrm{SE}_2$, which is 5-dimensional. To simplify the figures, we only track the $(x,y)$-location of the sleigh. This produces a function $\rho:E\to\mathbb{R}$ given by
\begin{equation}\label{eq:density_numeric}
	\int_A\, \rho(x,y) \, dxdy = \lim_{n\to\infty} \frac{1}{n}\, \sum_{k=0}^{n-1}\, \chi_{\tilde{A}}(\varphi^k(z)),
\end{equation}
where $\varphi = \varphi_1^\mathcal{H}$ is the time-1 map of the hybrid dynamics, $\chi_{\tilde{A}}$ is the indicator function for the set $\tilde{A}$, and 
\begin{equation*}
	\tilde{A} = \left\{ (x,y,\theta,p_x,p_y,p_\theta)\in \mathcal{D}^*\subset T^*\mathrm{SE}_2: (x,y)\in A\right\}.
\end{equation*}
Plots of numerical approximations for $\rho$ are shown in Figure \ref{fig:locations_sleigh}.
%%%%%%%%%%%%%%%%%%%%%%%%%%%%%%%%%%%%%%%%%%%%%%%%%%%%%%%%%%%%%%%%%%%%%%%%
\begin{figure}
	\begin{subfigure}[t]{.32\columnwidth}
		\centering
		\includegraphics[width=\linewidth]{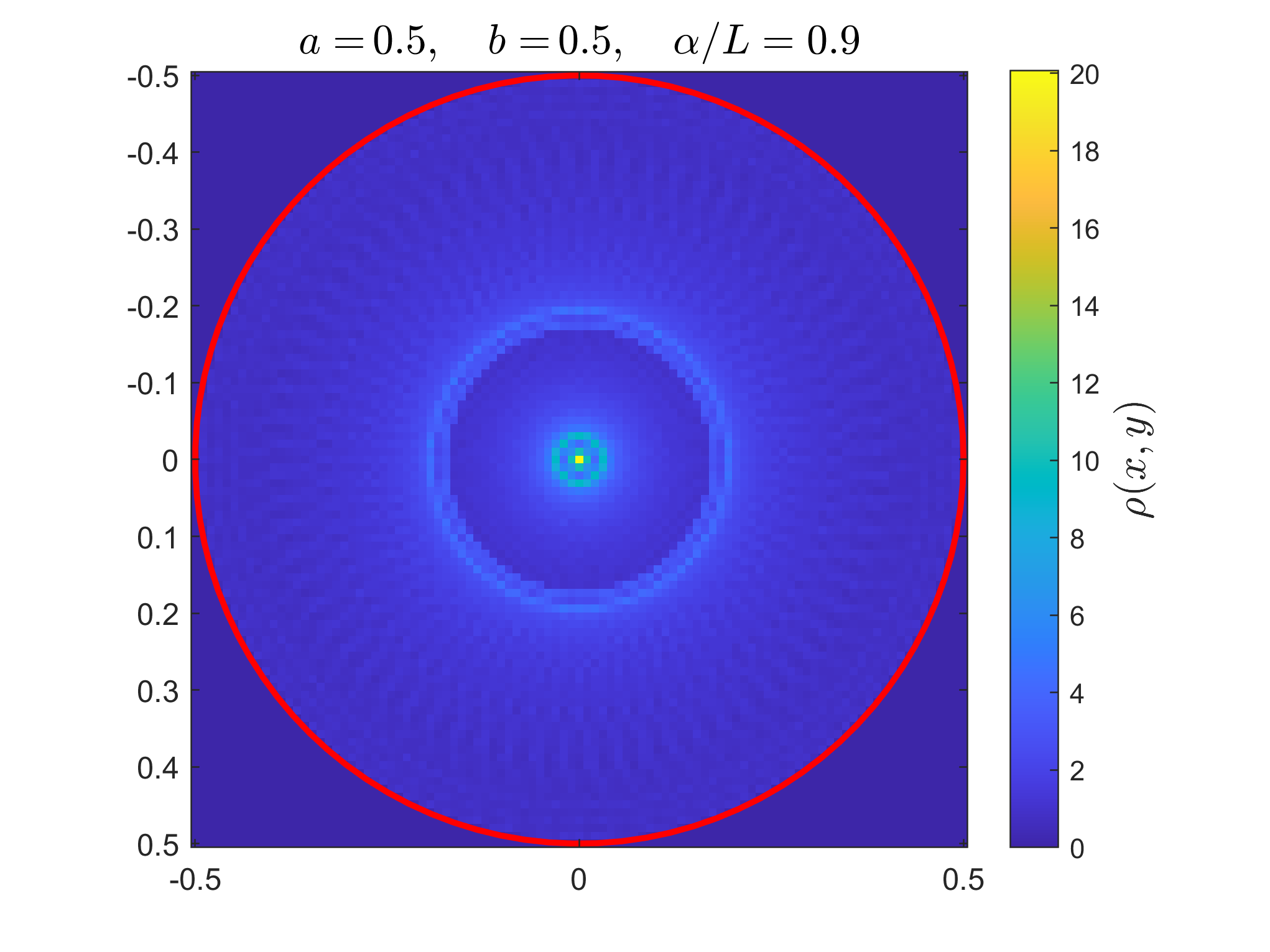}
	\end{subfigure}
	\hfill
	\begin{subfigure}[t]{.32\columnwidth}
		\centering
		\includegraphics[width=\linewidth]{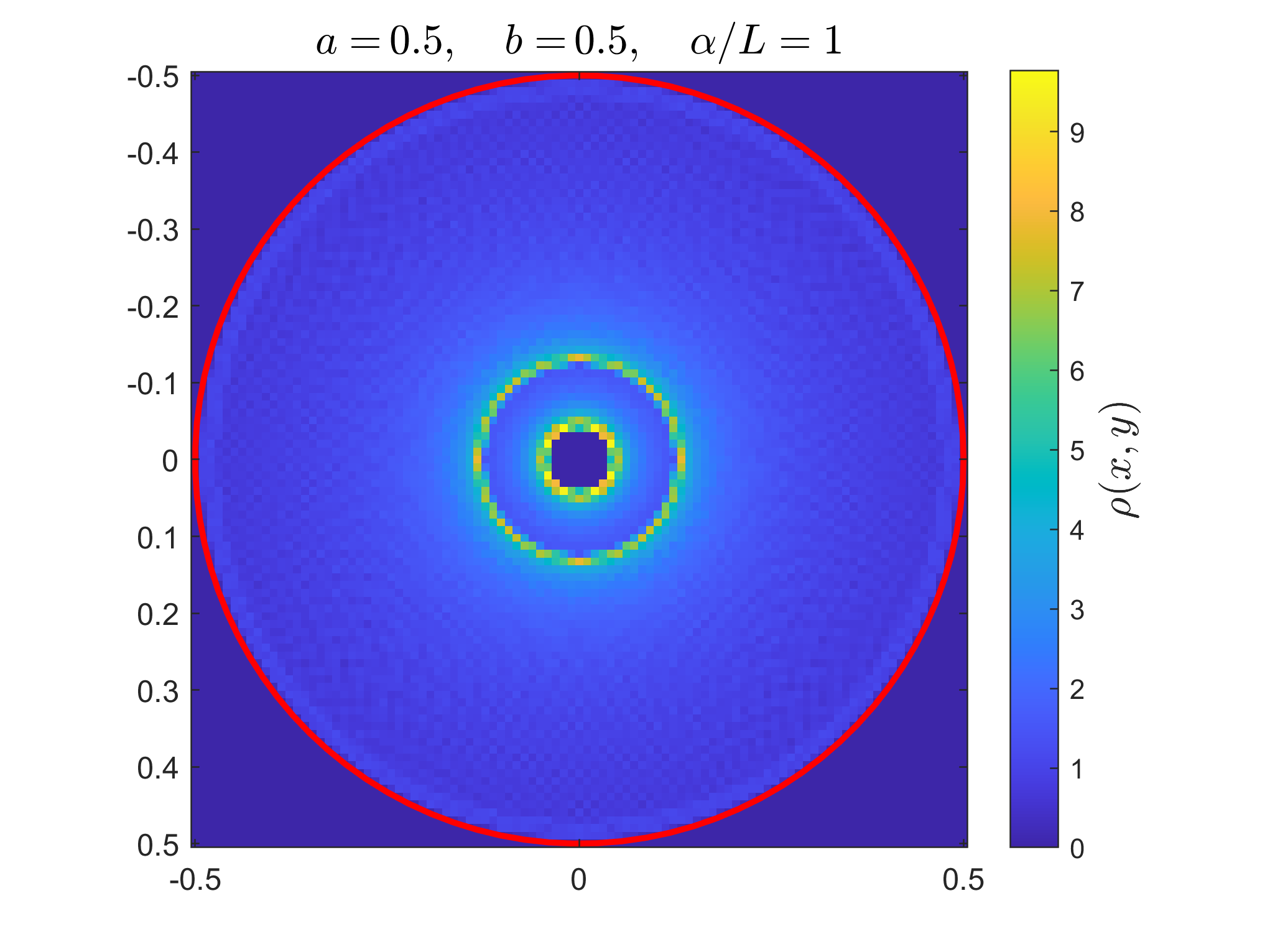}
	\end{subfigure}
	\hfill
	\begin{subfigure}[t]{0.32\columnwidth}
		\centering
		\includegraphics[width=\linewidth]{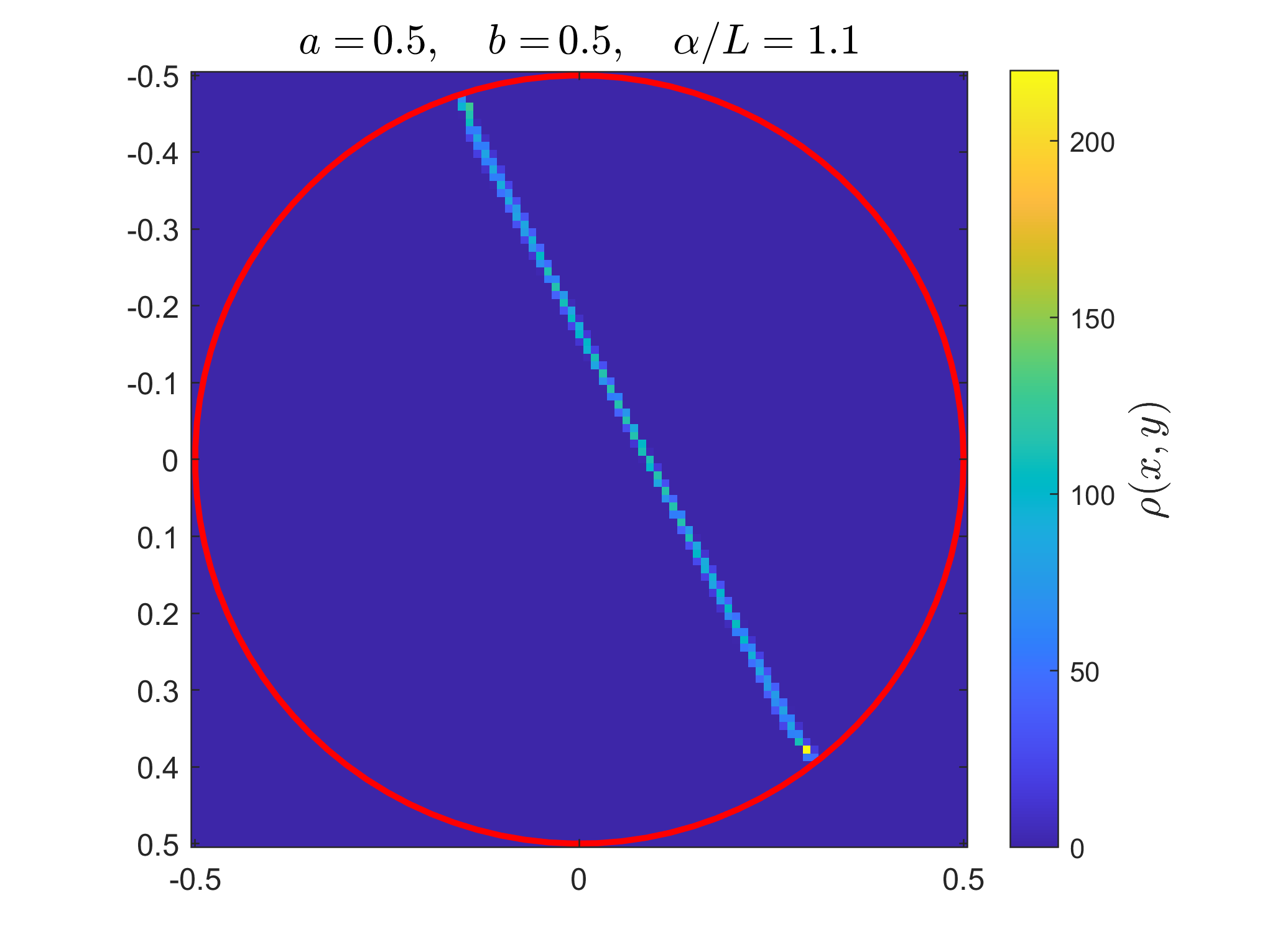}
	\end{subfigure}
	
	\medskip
	
	\begin{subfigure}[t]{0.32\columnwidth}
		\centering
		\includegraphics[width=\linewidth]{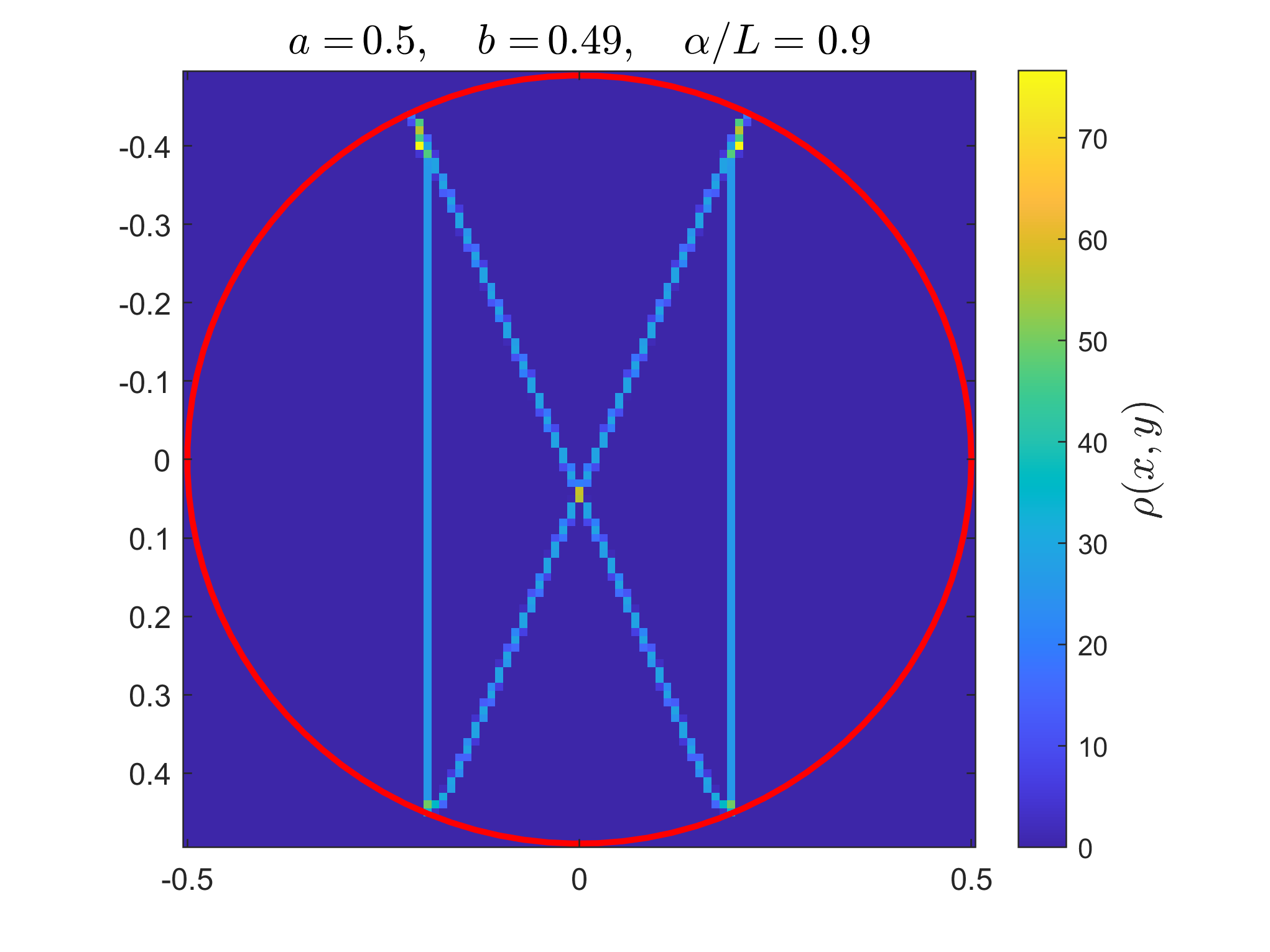}
	\end{subfigure}
	\hfill
	\begin{subfigure}[t]{0.32\columnwidth}
		\centering
		\includegraphics[width=\linewidth]{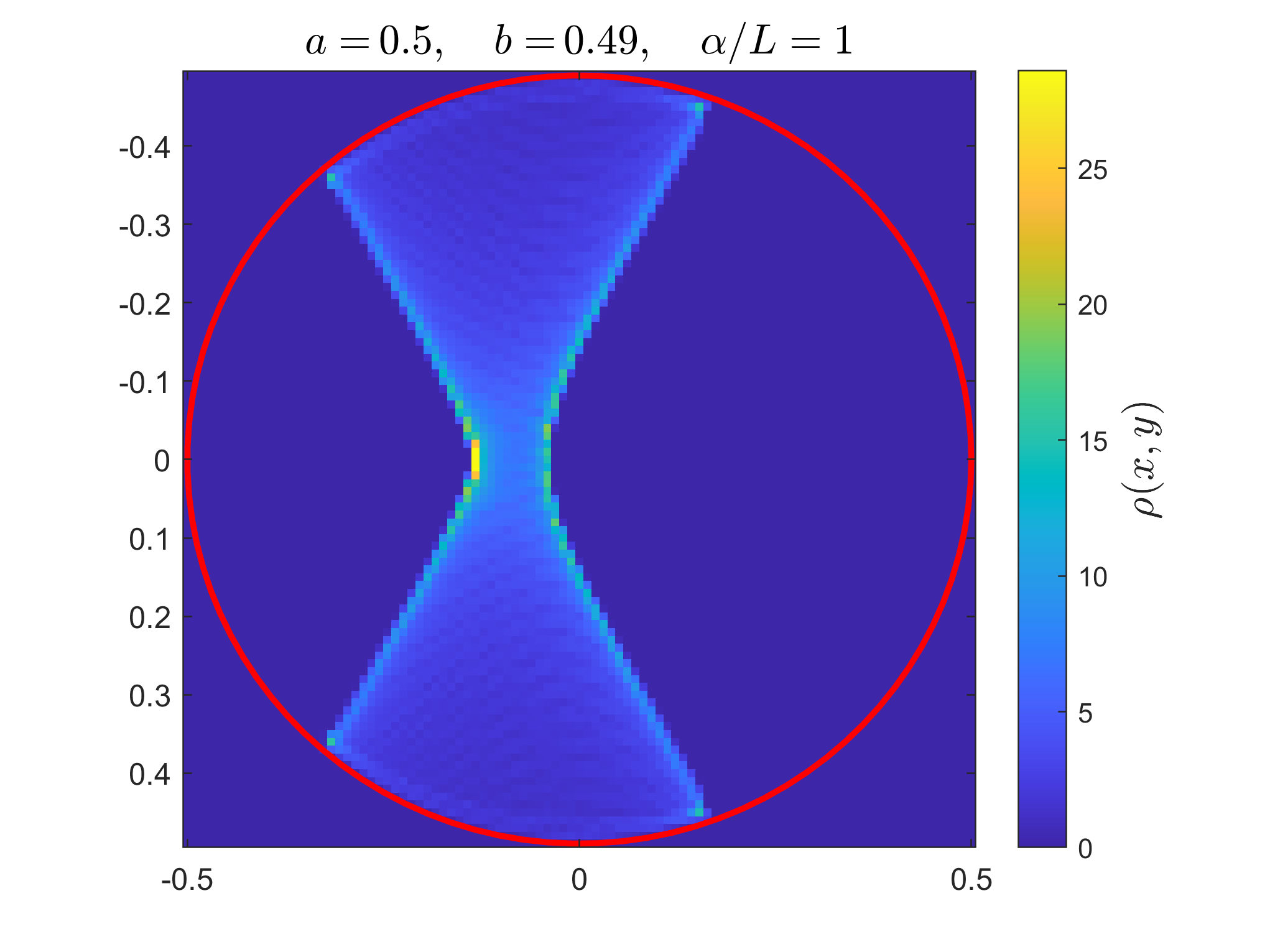}
	\end{subfigure}
	\hfill
	\begin{subfigure}[t]{0.32\columnwidth}
		\centering
		\includegraphics[width=\linewidth]{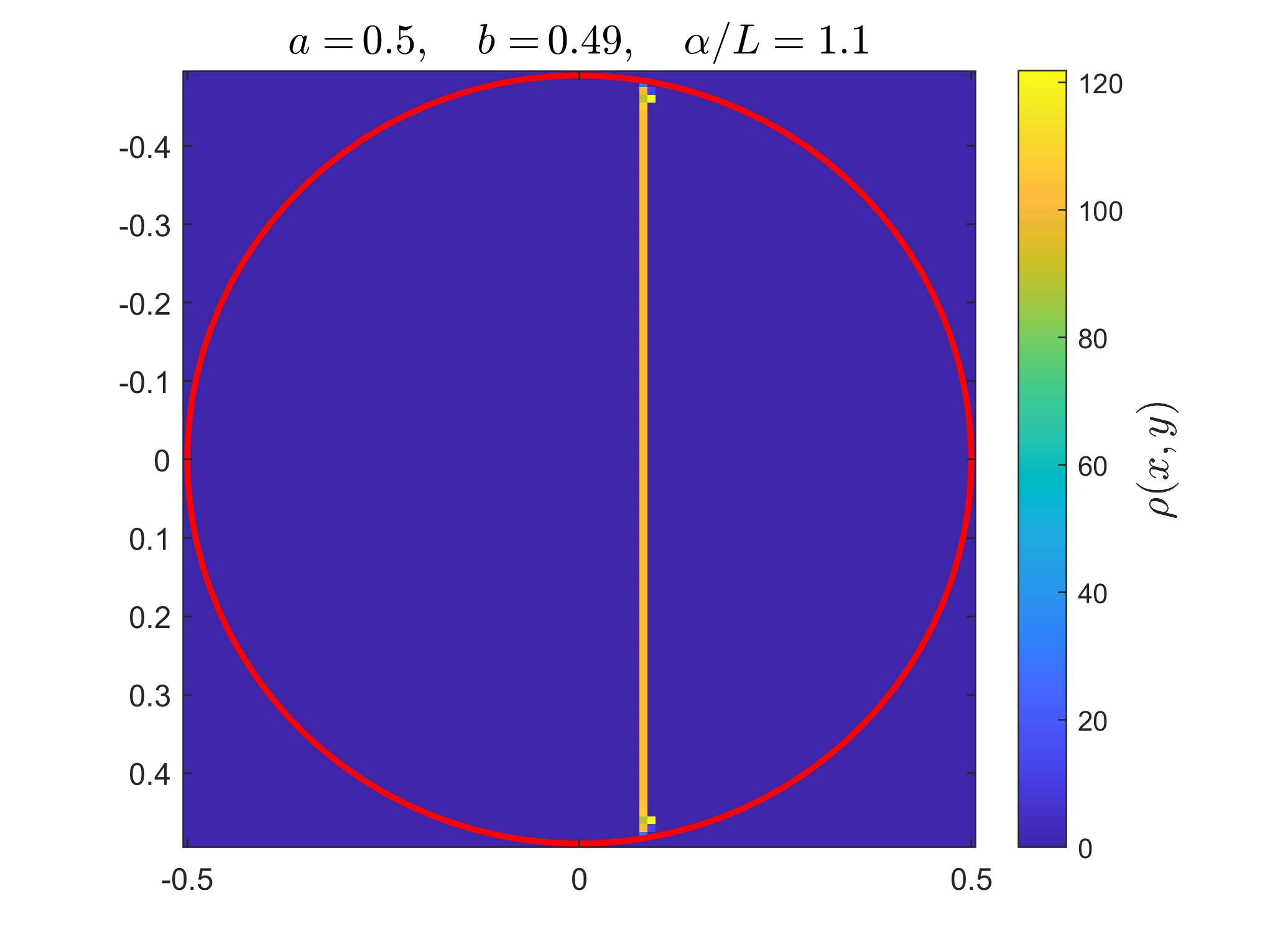}
	\end{subfigure}

	\medskip
	
	\begin{subfigure}[t]{0.32\columnwidth}
		\centering
		\includegraphics[width=\linewidth]{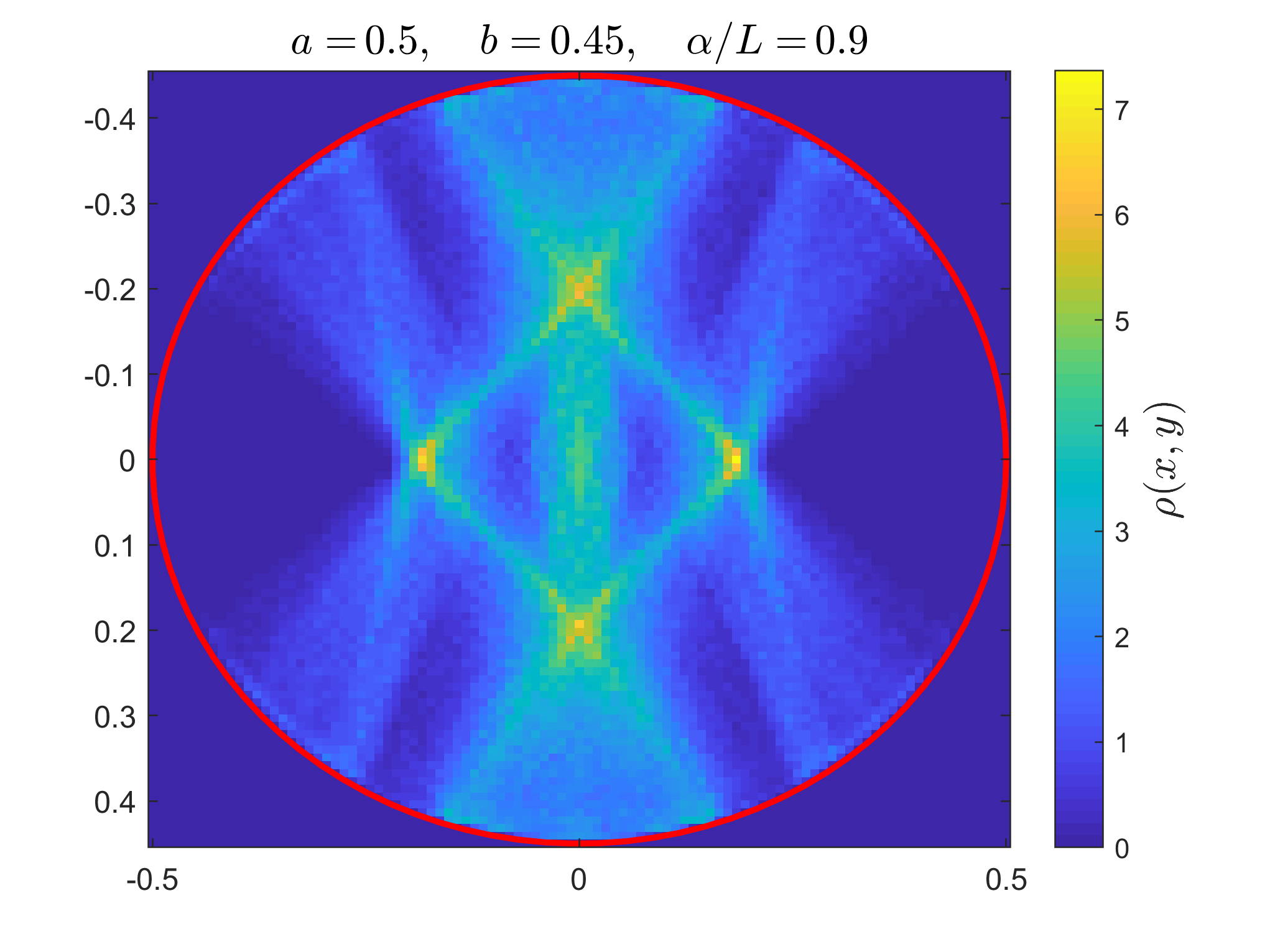}
	\end{subfigure}
	\hfill
	\begin{subfigure}[t]{0.32\columnwidth}
		\centering
		\includegraphics[width=\linewidth]{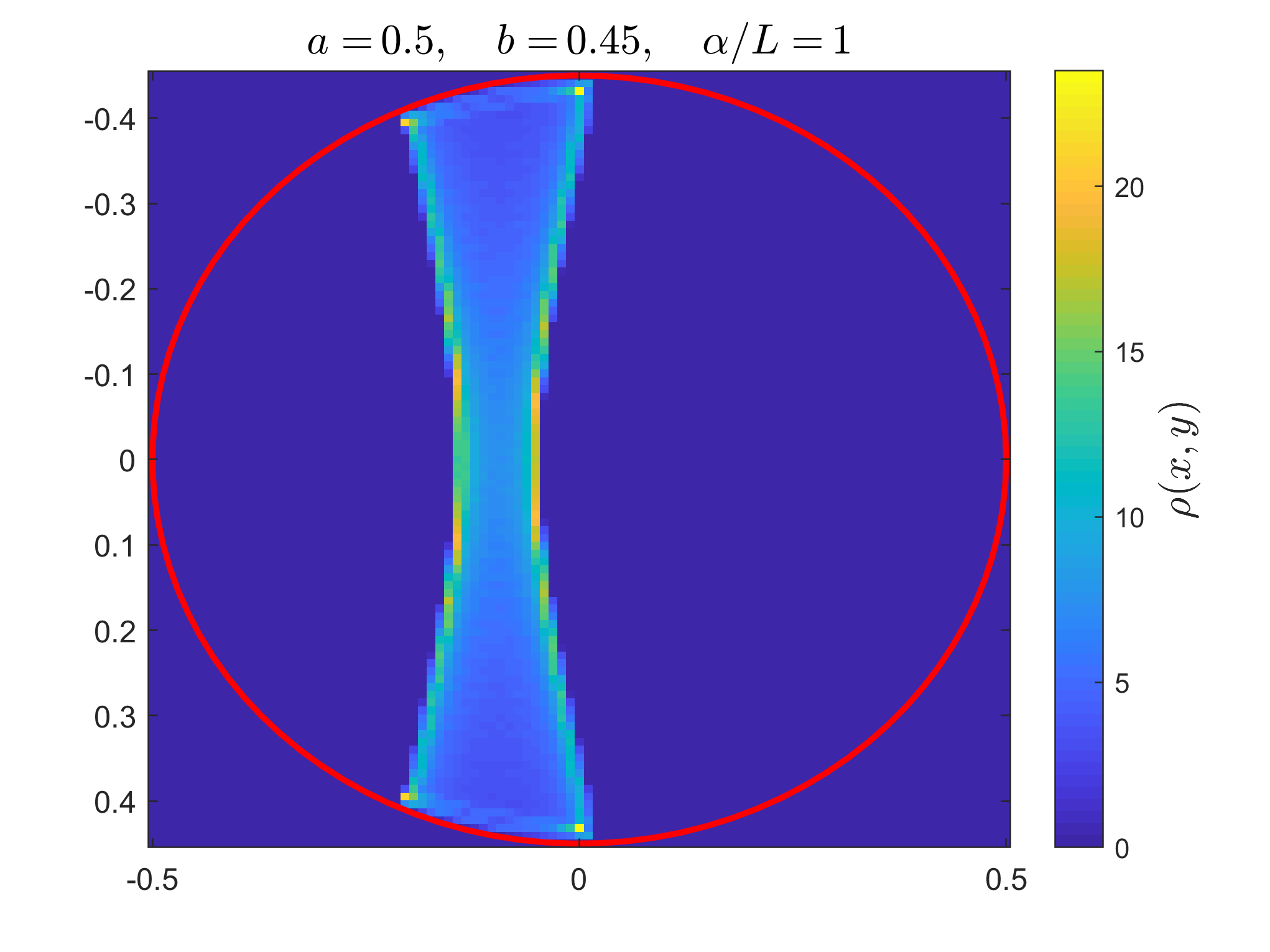}
	\end{subfigure}
	\hfill
	\begin{subfigure}[t]{0.32\columnwidth}
		\centering
		\includegraphics[width=\linewidth]{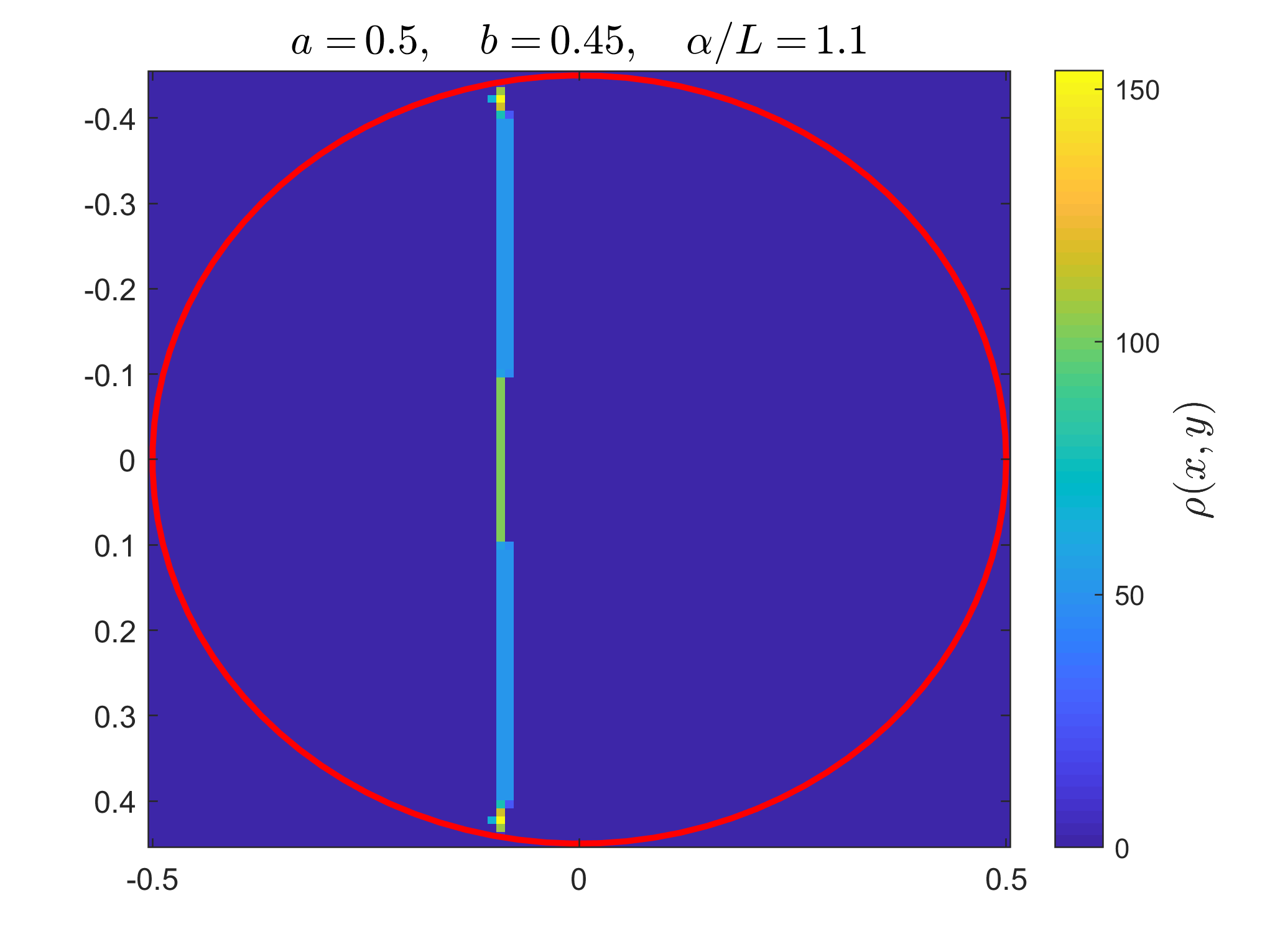}
	\end{subfigure}
	
	\caption{Plots of the statistical asymptotic behavior of the billiard Chaplygin sleigh. Each row corresponds to a different semi-minor axis length while each column corresponds to a different length of the sleigh. The density $\rho$ is approximated by dividing $E$ into 100 by 100 boxes and tracking the locations for $10^6$ time-1 maps. The first thousand iterations were discarded to minimize the influence of transient behavior.}
	\label{fig:locations_sleigh}
\end{figure}
%%%%%%%%%%%%%%%%%%%%%%%%%%%%%%%%%%%%%%%%%%%%%%%%%%%%%%%%%%%%%%%%%%%%%%%%

%%%%%%%%%%%%%%%%%%%%%%%%%%%%%%%%%%%%%%%%%%%%%%%%%%%%%%%%%%%%%%%%%%%%%%%%
\subsection{The rolling ball}
The next example is that of the (homogeneous) rolling ball; for more detail, cf. e.g. \cite{monforte2004geometric}. The Lagrangian is the kinetric energy and is given by
\begin{equation*}
	L = \frac{1}{2}\left( \dot{x}^2 + \dot{y}^2 + k^2\left( \dot{\theta}^2+\dot{\varphi}^2+\dot{\psi}^2 + 2\dot{\varphi}\dot{\psi}\cos\theta\right) \right).
\end{equation*}
The constraints which prohibit slipping while rolling are given by
\begin{equation*}
	\begin{split}
		\dot{x} &= r\dot\theta\sin\psi - r\dot{\varphi}\sin\theta\cos\psi, \\
		\dot{y} &= -r\dot{\theta}\cos\psi - r\dot{\varphi}\sin\theta\sin\psi.
	\end{split}
\end{equation*}
The rolling ball possesses the invariant volume whose density depends only on the configuration variables, cf. \cite{nhvolume}. Therefore, the invariant volume is also hybrid-invariant by Proposition \ref{prop:hybrid_volume}. The Poincar\'{e} recurrence theorem can be applied to obtain that almost every trajectory is recurrent for any compact table-top $E\subset \mathbb{R}^2$.

%%%%%%%%%%%%%%%%%%%%%%%%%%%%%%%%%%%%%%%%%%%%%%%%%%%%%%%%%%%%%%%%%%%%%%%%

%%%%%%%%%%%%%%%%%%%%%%%%%%%%%%%%%%%%%%%%%%%%%%%%%%%%%%%%%%%%%%%%%%%%%%%%
\subsection{The vertical rolling disk}
The final example presented here will be the vertical rolling disk. The Lagrangian is, again, the kinetic energy of the disk,
\begin{equation}
	L = \frac{1}{2}m\left(\dot{x}^2+\dot{y}^2\right) + \frac{1}{2}I\dot{\theta}^2 + \frac{1}{2}J\dot{\varphi}^2.
\end{equation}
Here, $m$ is the mass of the disk, $I$ is the moment of inertia of the disk about the axis perpendicular to the plane of the disk, $J$ is the moment of inertia about an axis in the plane of the disk, and $R$ is the radius of the disk. The constraints enforcing rolling without slipping are
\begin{equation*}
	\begin{split}
		\dot{x} &= R\dot\theta\cos\varphi, \\
		\dot{y} &= R\dot\theta\sin\varphi.
	\end{split}
\end{equation*}
More information on the hybrid nonholonomic equations of motion can be found in \cite{bpenny}. Similarly to the rolling ball, the vertical rolling disk possesses an invariant volume whose density depends only on the configuation variables, \cite{nhvolume}, and Proposition \ref{prop:hybrid_volume} can be applied.

%%%%%%%%%%%%%%%%%%%%%%%%%%%%%%%%%%%%%%%%%%%%%%%%%%%%%%%%%%%%%%%%%%%%%%%%
\subsubsection{Statistical distribution of the disk}
We present some numerical results on the long term behavior of the disk. Consider the interior of the billiard table (an ellipse)
\begin{equation*}
	\begin{split}
	E &= \left\{ (x,y) \in \mathbb{R}^2: \frac{x^2}{a^2} + \frac{y^2}{b^2} \leq 1\right\}, \\
	S &= \left\{ (x,y,\theta,\varphi)\in\mathbb{R}^2\times S^1\times S^1: (x\pm R\cos\varphi,y\pm R\sin\varphi)\in \partial E\right\},
	\end{split}
\end{equation*}
where the $\pm$ serves the same roll as in the sleigh; either the front or the back of the disk may strike the wall.
The density function $\rho:E\to\mathbb{R}$ is given by the same numerical procedure as \eqref{eq:density_numeric} with the definition $\tilde{A}$ changed accordingly. Numerical results can be found in Figure \ref{fig:locations_penny}.
%%%%%%%%%%%%%%%%%%%%%%%%%%%%%%%%%%%%%%%%%%%%%%%%%%%%%%%%%%%%%%%%%%%%%%%%
\begin{figure}
	\begin{subfigure}[t]{.48\columnwidth}
		\centering
		\includegraphics[width=\linewidth]{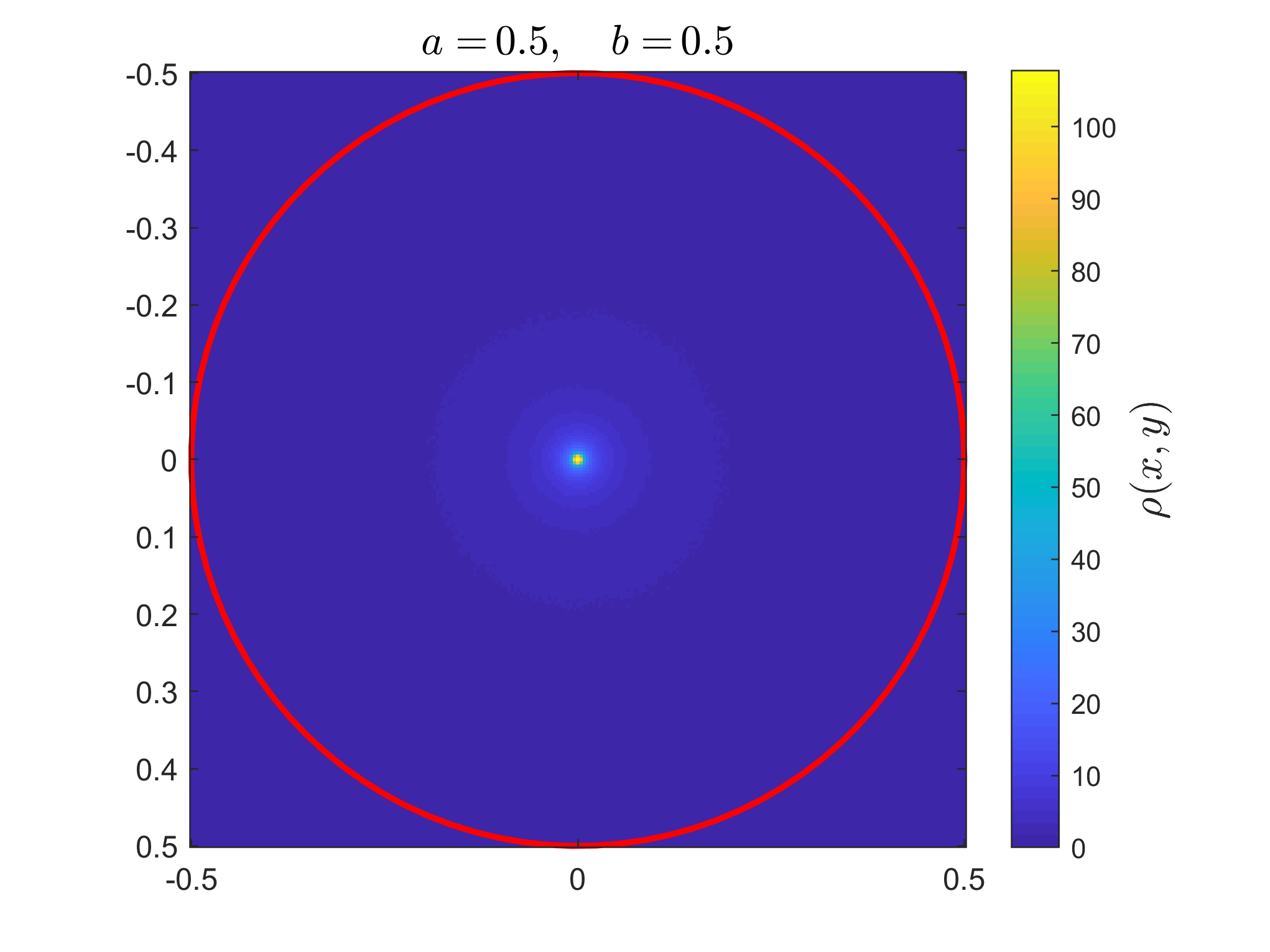}
	\end{subfigure}
	\hfill
	\begin{subfigure}[t]{.48\columnwidth}
		\centering
		\includegraphics[width=\linewidth]{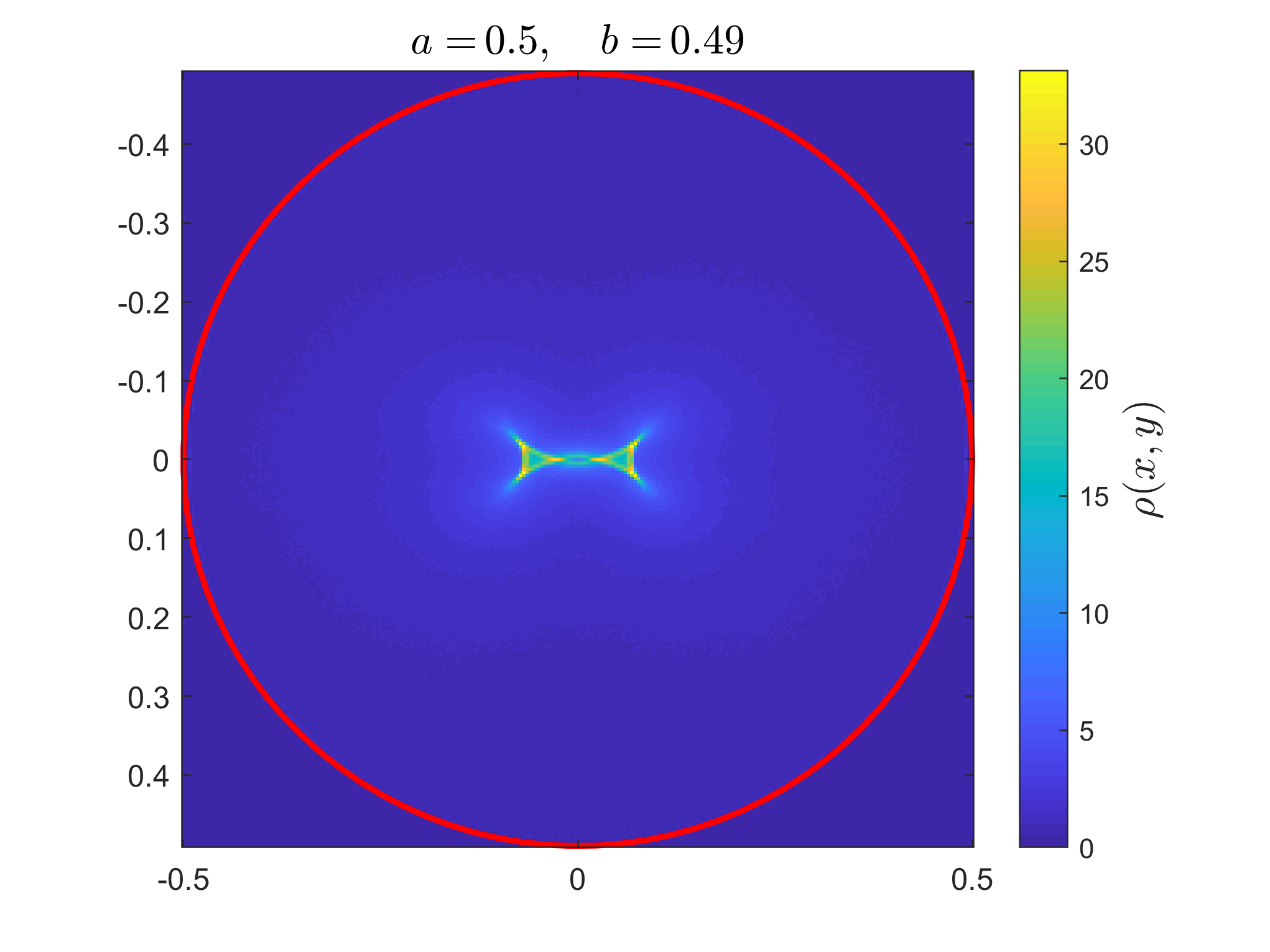}
	\end{subfigure}
	
	\medskip
	
	\begin{subfigure}[t]{0.48\columnwidth}
		\centering
		\includegraphics[width=\linewidth]{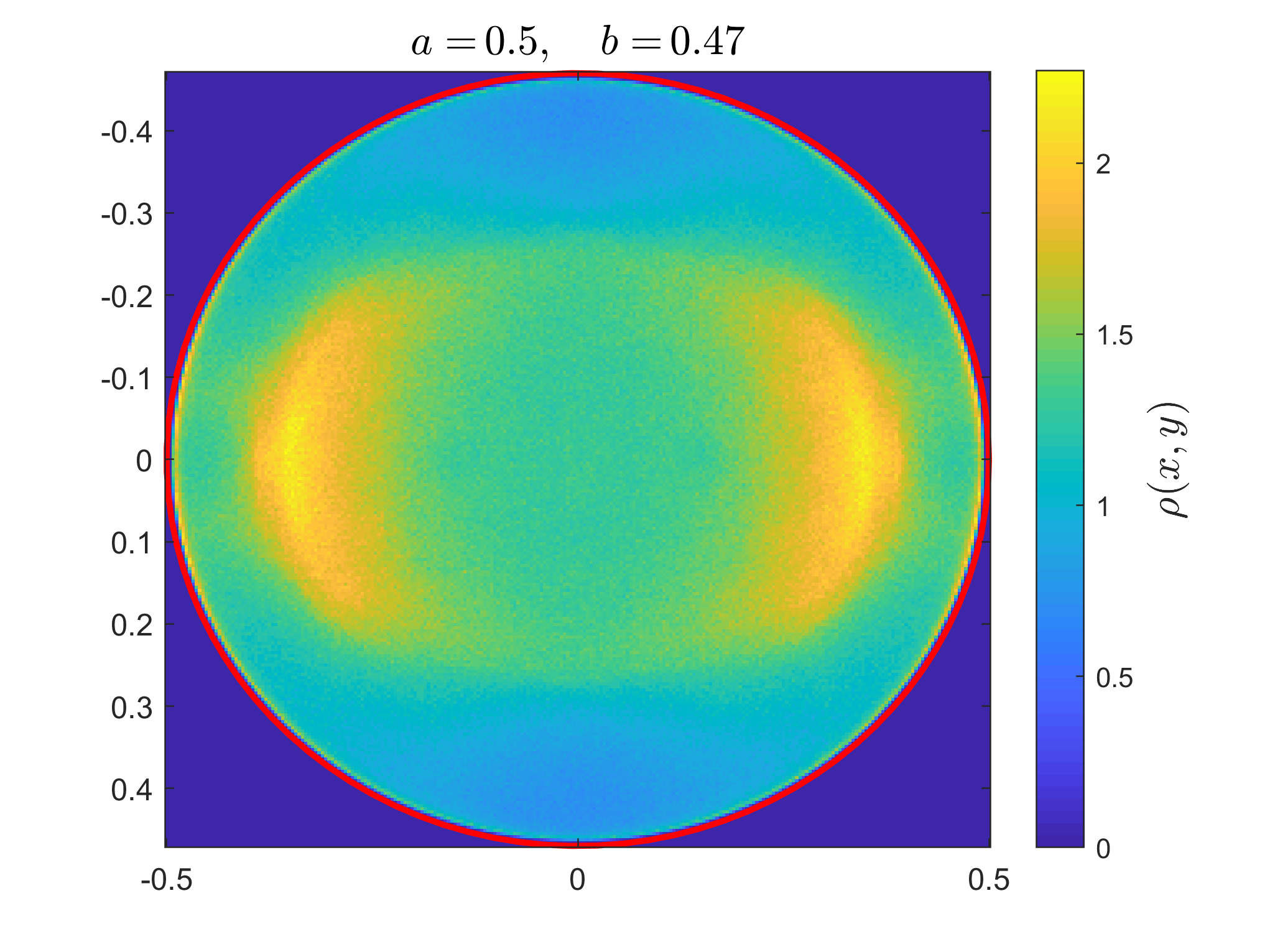}
	\end{subfigure}
	\hfill
	\begin{subfigure}[t]{0.48\columnwidth}
		\centering
		\includegraphics[width=\linewidth]{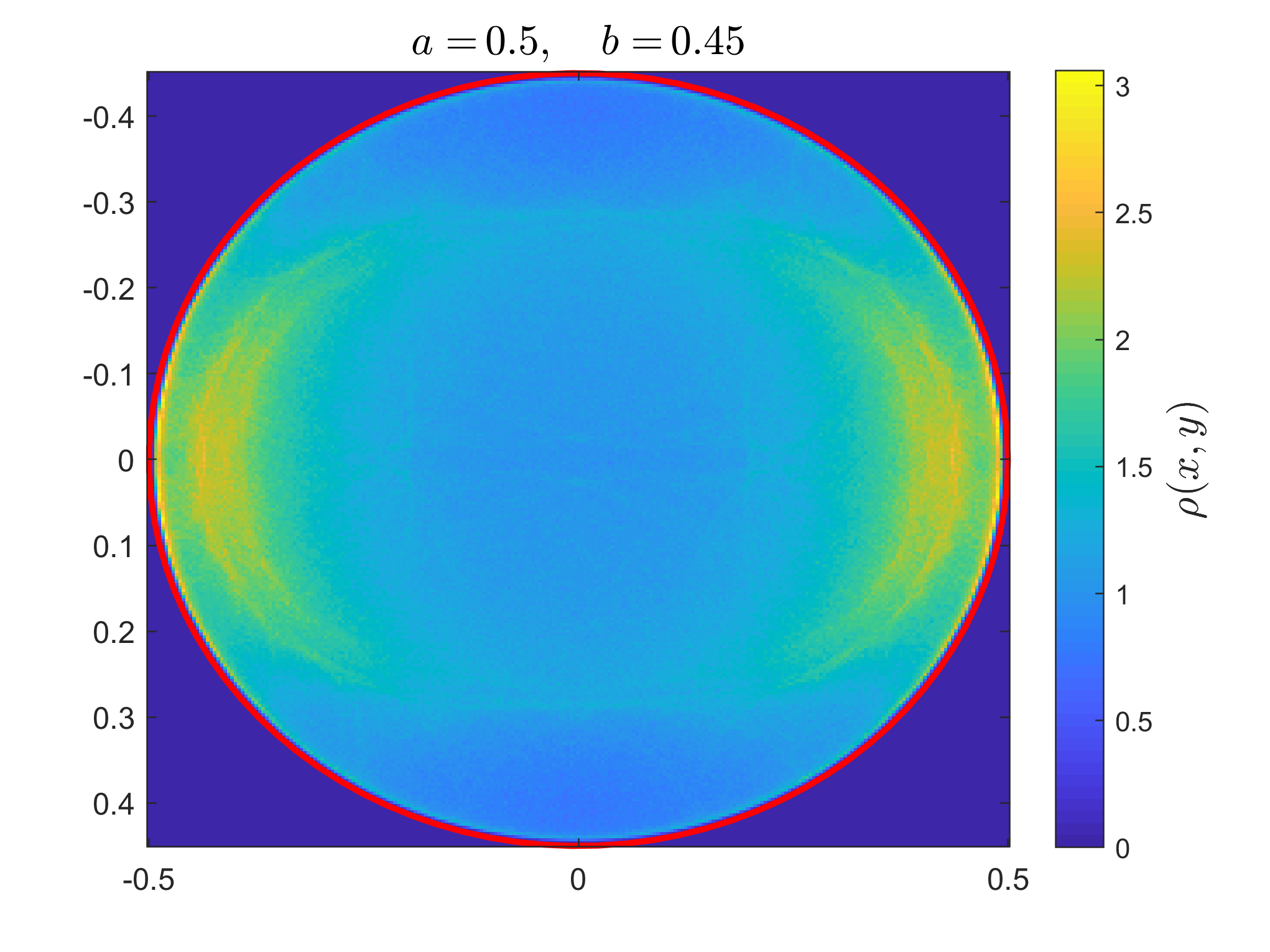}
	\end{subfigure}
	\caption{Plots of the asymptotic locations of the penny for various semi-minor axis lengths. The density $\rho$ is approximated by dividing $E$ into 200 by 200 boxes and tracking the location of $10^6$ time-1 maps.
	}
	\label{fig:locations_penny}
\end{figure}
%%%%%%%%%%%%%%%%%%%%%%%%%%%%%%%%%%%%%%%%%%%%%%%%%%%%%%%%%%%%%%%%%%%%%%%%

%%%%%%%%%%%%%%%%%%%%%%%%%%%%%%%%%%%%%%%%%%%%%%%%%%%%%%%%%%%%%%%%%%%%%%%%
%%%%%%%%%%%%%%%%%%%%%%%%%%%%%%%%%%%%%%%%%%%%%%%%%%%%%%%%%%%%%%%%%%%%%%%%
%%%%%%%%%%%%%%%%%%%%%%%%%%%%%%%%%%%%%%%%%%%%%%%%%%%%%%%%%%%%%%%%%%%%%%%%
%%%%%%%%%%%%%%%%%%%%%%%%%%%%%%%%%%%%%%%%%%%%%%%%%%%%%%%%%%%%%%%%%%%%%%%%
\section{Conclusion and future work}\label{sec:future}
This work was motivated by the goal of understanding the set $\mathscr{A}_\mathcal{H}$ of hybrid-invariant differential forms. Straight-forward testable conditions were developed (Theorem \ref{th:energy_specular}) which allowed for a trivial proof that unconstrained hybrid mechanical systems are symplectic, and consequently, volume preserving (Proposition \ref{prop:unconst_hamilt_volume}).

When restricted to volume forms, the conditions prescribed by Theorem \ref{th:energy_specular} produce a ``hybrid cohomology'' equation \eqref{eq:hybrid_cohom}. This has a trivial solution for unconstrained mechanical systems. Existence of solutions for nonholonomic systems is completely controlled by the continuous component as the discrete component is trivial (Theorem \ref{th:nonh_hybrid_jac}). As the existence of an invariant volume is independent on the structure of the impacts, existence of an invariant volume for the continuous component results in hybrid-invariant volumes independent of the impacts (Proposition \ref{prop:hybrid_volume}).

Finally, the existence of an invariant volume imposes considerable limitations on Zeno solutions in hybrid systems (Theorem \ref{thm:noZeno}); although this result does not apply to spasmodic Zeno states, only steady Zeno states. Moreover, it is not needed that the continuous component be volume-preserving; as long as the impacts are not dissipative, Zeno trajectories are still controlled (Theorem \ref{th:zeno_impacts}). In particular, all elastic mechanical systems (holonomic or nonholonomic) possess almost no Zeno states.

We draw attention to two possible future extensions of this work: hybrid integrators and hybrid brackets.

For a given $\alpha\in\mathscr{A}_\mathcal{H}$, the goal is to construct a numerical integrator which preserves this form. This idea is explored in \cite{impactSymp} where an integrator is constructed which preserves the symplectic form for hybrid Hamiltonian systems. One way to further this topic is to find a systematic method to generate integrators for \textit{arbitrary} hybrid systems which preserve an \textit{arbitrary} invariant differential form $\alpha\in\mathscr{A}_\mathcal{H}$.

The next direction is concerned entirely with hybrid mechanical systems. It would seem natural to define a hybrid bracket (which mimics the usual Poisson bracket for continuous Hamiltonian systems) in the following way. Let $\mathfrak{D}:C^\infty(T^*Q)\to C^\infty(S^*,T^*Q)$ be given by $\mathfrak{D}(H) = \Delta_H$ such that $\Delta_H$ satisfies \eqref{eq:hamiltonian_impact}. Then we would define the hybrid bracket via
\begin{equation*}
	\left\{ f,g\right\}_\mathcal{H} = \begin{cases}
		\{f,g\}, & x\not\in S^*, \\
		\mathfrak{D}(f)^*g - g, & x \in S^*,
	\end{cases}
\end{equation*}
where $\{f,g\}$ is the usual Poisson bracket. However, this is problematic for (at least) four reasons.
\begin{itemize}
	\item It is not clear that the hybrid bracket is either smooth or continuous.
	\item The hybrid bracket may not be skew.
	\item Certain conditions are needed for $H$ to guarantee a single, nontrivial, solution to \eqref{eq:hamiltonian_impact}.
	\item Impacts might not occur; for example if $S = \{x^2+y^2=1\}$, then impacts never occur with the Hamiltonian $H = xp_y-yp_x$ but do with the Hamiltonian $H = p_x^2+p_y^2$.
\end{itemize}
We intend to address these issues in future work.

%For acknowledgements section, please don't number the section, please begin it with \section*{Acknowledgements}
%\section*{Acknowledgments} We would like to thank you for \textbf{following
%the instructions above} very closely in advance. It will definitely
%save us lot of time and expedite the process of your paper's
%publication.

% You may incorporate your references as follows in your main tex file.
% Using BibTex is not recommended but can be handled.

\medskip
% The data information below will be filled by AIMS editorial staff

\medskip

\end{document}